\theoremstyle{plain}
\newtheorem{thm}{Theorem}
\newtheorem{lem}[thm]{Lemma}
\newtheorem{prop}[thm]{Proposition}
\newtheorem{cor}[thm]{Corollary}
\theoremstyle{definition}
\theoremstyle{remark}
\newtheorem{rmk}[thm]{Remark}
\DeclareMathOperator{\Tr}{Tr}
\DeclareMathOperator{\SL}{SL}
\DeclareMathOperator{\id}{id}
\newcommand{\CC}{\mathbf{C}}
\newcommand{\ZZ}{\mathbf{Z}}
\DeclareMathOperator{\lcm}{lcm}
\DeclareMathOperator{\End}{End}
\DeclareMathOperator{\Inv}{Inv}
\begin{document}
\title[$1$-point functions for symmetrized Heisenberg and symmetrized lattice VOAs]{$1$-point functions for symmetrized Heisenberg and symmetrized lattice vertex operator algebras}
\author{Geoffrey Mason}
\email{gem@ucsc.edu}
\address{University of California Santa Cruz}
\author{Michael H. Mertens}
\email{mmertens@math.uni-koeln.de}
\address{Universit\"at zu K\"oln (on leave from Universit\"at Bielefeld)}
\thanks{The authors were supported by grants  $\# 62524$ and $427007$ respectively from the Simons Foundation.}
\date{4/4/22}

\begin{abstract}
We obtain explicit formulas for the $1$-point functions of all states in the symmetrized Heisenberg algebra $M^+$ and symmetrized lattice VOAs $V_L^+$. For this we employ a new 
$\ZZ_2$-twisted variant of so-called Zhu recursion.
\end{abstract}

\maketitle

\tableofcontents

\section{Introduction} 
Among the most prominent vertex operator algebras (VOAs) are the Heisenberg VOAs $M$ (of any rank, say $k$) and lattice theories $V_L$ for an even lattice $L$.\
Each of these theories admit certain natural involutive automorphisms and the corresponding fixed-point subVOAs are usually denoted by $M^+$ and $V_L^+$ and called the \textit{symmetrized Heisenberg} or \textit{symmetrized lattice VOA} respectively.\ (Additional background and details are provided in Section \ref{S2}.)\ The purpose of the present paper is to describe the $1$-point trace functions for the VOAs $M^+$ and $V_L^+$.

\medskip
We now give a more detailed overview of the contents and main results of the present paper.\ Since its very inception as a mathematical theory,  VOAs have been ineluctably linked with the theory of modular forms.\ The catchphrase is \textit{modular invariance in CFT}.\ Modular invariance is visibly at work in the
theory of $n$-point functions \cite{Z} of a VOA $V$.\ In particular the $1$-point functions, defined by
\begin{eqnarray*}
Z_V(u, \tau) :=\Tr_V o(u)q^{L(0)-c/24}=q^{-c/24}\sum_n \Tr_{V_n} o(u)q^n, \quad (q=e^{2\pi i\tau})
\end{eqnarray*}
where $V=\bigoplus V_n$ is a VOA of central charge $c$ and $u\in V$ a state of $V$ with zero mode $o(u)$ (cf.\ Subsections \ref{SS2.2}, \ref{SS2.3} for definitions and explanations of this terminology), are closely related to elliptic modular forms.\ There are several levels at which one can investigate the modular invariance of $1$-point functions for $V$:
\begin{eqnarray*}
&&(a)\ \mbox{Determine whether or not all $1$-point functions are (linear combinations of)}\\
&&\ \ \ \ \ \mbox{ elliptic modular forms.}\\
&&(b)\ \mbox{Determine the space of functions spanned by all $1$-point functions.}\\
&&(c)\ \mbox{Give explicit formulas for the $1$-point functions $Z_V(u_i, \tau)$ for a basis $\{u_i\}$ of $V$.}
\end{eqnarray*}

Problem (a) is important for many reasons, including applications to the structure theory of VOAs.\ One would like to know the precise class of VOAs enjoying the property that all $1$-point functions
are elliptic modular forms on a congruence subgroup.\  Presumably it includes the class of strongly regular VOAs (cf.\ \cite{M}) though this has not been proved.\ Sufficient conditions are known from the combined works of many authors, of whom we mention Zhu \cite{Z}, Huang \cite{H}, Dong-Ng \cite{DN} and  Dong-Li-Mason \cite{DLM}, although we will not need these results.\  In practice
even if we know that the answer to Problem (a) is affirmative for $V$, it offers little more than guidance when  trying to solve Problems (b) or (c).\ Problem (b) was answered for the Moonshine module $V^{\natural}$ in \cite{DM} and some other holomorphic VOAs in \cite{KH, KH1}.\
There are many interesting VOAs where not all $1$-point functions are modular yet they have interesting answers to Problems (b) and (c).\ Solving Problem (c) is challenging for any VOA $V$ and usually much more so than solving Problem (b).\ According to the precepts of \cite{MT3}, in order to compute the \textit{genus $2$}
partition function of $V$ it is, however, necessary to first solve Problem (c).

\medskip
An example that influences the present work is that of the Heisenberg VOA $M$.\ In this case the answer to Problem (a) is negative, but both Problems (b) and (c) have been solved.\
In fact Problem (b) was first answered in \cite{DMN} and the result may be stated as follows:\ every trace function satisfies
\begin{eqnarray}\label{Mtrace}
Z_M(u, \tau) \in \mathbf{C}[E_2, E_4, E_6]Z_M(\mathbf{1}, \tau),
\end{eqnarray}
where $\mathbf{C}[E_2, E_4, E_6]$ is the algebra of quasimodular forms and 
$$Z_M(\mathbf{1}, \tau)= \sum_n \dim  M_n q^{n-c/24}$$ 
is the 
 \textit{graded character} of $M$.\ Moreover every such function occurs as a $1$-point function for some $u$.\ An analogous, but somewhat more complicated  result
 also holds for lattice theories $V_L$ (loc.\ cit.)

\medskip
Problem (c) was resolved for $M$ and lattice VOAs $V_L$ in \cite{MT2}.\ The general approach, which  goes back to \cite{Z}, was also used in \cite{DMN} and streamlined in \cite{MT1,MT2}.\ The 
basic idea is to establish a certain type of recursion formula for $1$-point functions, called \textit{Zhu recursion},  that  describes $Z_V(u, \tau)$ in terms of modular data and trace functions of states of lower weight than $u$.\ The recursion formula is a general phenomenon that holds for \textit{any} VOA.\ Difficulty can arise in trying to solve the recursion, and this depends to a large extent on the nature of $V$.\ This process is important for the methodology of the present paper, and we provide some background  in Sections \ref{cylinder}-\ref{SZR1}
closely following  \cite{MT1}.\  In order to give an impression of the shape these explicit results are given in, we cite the following special case of the results in \cite{MT1,MT2} for the Heisenberg algebra of rank $k=1$ (for details on the notation, we refer to Sections~\ref{SS2.3} and~\ref{cylinder}).
\begin{thm}\label{thmMT} Let $M$ be the rank $k=1$ Heisenberg VOA and choose  $h\in H$ with $(h, h)=1$.\ Let $u=h[-n_1]h[-n_2]...h[-n_p]\mathbf{1},\ \ (n_i\geq 1)$ be a state in $M$.\ Then we have that
\begin{eqnarray}\label{sqM}
Z_M(u, \tau)= \left(\sum_{\sigma} \prod_{(rs)} \hat{E}_{n_r+n_s}(\tau)\right) Z_M(\tau).
\end{eqnarray}
Here, $\sigma$ ranges over fixed-point-free involutions in the symmetric group on the index set  $\{1, ..., p\}$ and
$(rs)$ ranges over the transpositions of $S_p$ whose product is equal to $\sigma$.\ $\hfill\Box$
\end{thm}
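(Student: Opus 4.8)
The plan is to prove \eqref{sqM} by induction on $p$, using the Zhu recursion of Sections~\ref{cylinder}--\ref{SZR1} (equivalently \cite{MT1,MT2}) as the engine and recognizing the right-hand side as an instance of Wick's theorem: a sum over perfect matchings of $\{1,\dots,p\}$ in which each matched pair $\{r,s\}$ contributes a ``propagator'' $\hat E_{n_r+n_s}(\tau)$. Accordingly the only two ingredients I need are (i) a one-step reduction expressing $Z_M(h[-n]v,\tau)$ as a sum of strictly lower-weight $1$-point functions with Eisenstein coefficients, and (ii) the bookkeeping that identifies the result of iterating (i) with the matching sum.

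For (i), I would invoke Zhu recursion in the form it takes for a weight-one generator. Writing $v=h[-n_2]\cdots h[-n_p]\mathbf 1$, the recursion rewrites $Z_M(h[-n_1]v,\tau)$ as a boundary term plus a sum $\sum_{m\ge 1}c_m(n_1,\tau)\,Z_M(h[m]v,\tau)$ with coefficients $c_m$ built from Eisenstein series. The boundary term is proportional to a graded trace of the zero mode $o(h)=h(0)$; since $h(0)$ is the Heisenberg charge and $M$ is the charge-zero Fock space, this term vanishes. (It is precisely the non-vanishing of the analogous term that makes $V_L$ harder.) The nonnegative modes then act on $v$ through the Heisenberg relations $[h[m],h[-n_j]]=m\,\delta_{m,n_j}(h,h)=m\,\delta_{m,n_j}$, whence
\begin{equation*}
h[m]v=\sum_{j=2}^p m\,\delta_{m,n_j}\;h[-n_2]\cdots\widehat{h[-n_j]}\cdots h[-n_p]\mathbf 1 ,
\end{equation*}
the hat denoting omission. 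Substituting collapses the $m$-sum to $m=n_j$, and after matching the surviving coefficient against the normalization of $\hat E$ one obtains the clean single-contraction identity
\begin{equation*}
Z_M(h[-n_1]v,\tau)=\sum_{j=2}^{p}\hat E_{n_1+n_j}(\tau)\,Z_M\!\left(h[-n_2]\cdots\widehat{h[-n_j]}\cdots h[-n_p]\mathbf 1,\tau\right).
\end{equation*}
Thus one application of the recursion contracts the first factor with each of the remaining factors in turn.

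With this in hand the induction is essentially forced. The base cases are $p=0$, where the empty product and the unique empty matching give $Z_M(\mathbf 1,\tau)=Z_M(\tau)$, and $p=1$, where $h[m]\mathbf 1=0$ for $m\ge 1$ forces $Z_M(h[-n_1]\mathbf 1,\tau)=0$, agreeing with the absence of fixed-point-free involutions on a singleton. Since the contraction step removes two indices at once, parity is preserved, so odd $p$ always terminates at the vanishing $p=1$ case, matching the vanishing of the matching sum for odd $p$. For even $p\ge 2$, every fixed-point-free involution $\sigma$ of $\{1,\dots,p\}$ is determined by the choice $j=\sigma(1)$ together with a fixed-point-free involution $\sigma'$ of $\{1,\dots,p\}\setminus\{1,j\}$, and the weight factors as $\prod_{(rs)}\hat E_{n_r+n_s}=\hat E_{n_1+n_j}\prod_{(rs)\in\sigma'}\hat E_{n_r+n_s}$. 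Applying the inductive hypothesis to each $Z_M(h[-n_2]\cdots\widehat{h[-n_j]}\cdots h[-n_p]\mathbf 1,\tau)$ and summing over $j$ therefore reassembles exactly the sum over all fixed-point-free involutions in \eqref{sqM}.

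The main obstacle lies entirely in step (i): one must pin down the precise shape of the weight-one Zhu recursion—its Eisenstein coefficients and the binomial factors that arise when the creation mode $h[-n_1]$ is pushed onto the nonnegative modes—and then verify that, once the commutator $\delta$-function selects $m=n_j$, the residual coefficient coincides with the normalized Eisenstein series $\hat E_{n_1+n_j}(\tau)$ as defined in Section~\ref{cylinder}. Everything else, namely the vanishing of the boundary term and the matching combinatorics, follows immediately once this normalization is checked.
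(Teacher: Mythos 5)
Your proposal is correct and takes essentially the same route as the paper: the paper itself quotes Theorem~\ref{thmMT} from \cite{MT1,MT2} without reproving it, but its proofs of the directly analogous results (Lemma~\ref{lemVL}, Theorems~\ref{thmM+} and \ref{thmlattrecur}) are precisely your induction --- apply Zhu recursion (Theorem~\ref{thm2}) to $h[-n_1]v$, note the boundary term vanishes because $o(h)=h(0)$ annihilates $M$, contract via the square-bracket Heisenberg relations $[h[m],h[-n_j]]=m\,\delta_{m,n_j}$, and reassemble the sum over fixed-point-free involutions by splitting off the pair $\{1,\sigma(1)\}$. Your normalization check also goes through: the factor $\tfrac{1}{m}$ in Theorem~\ref{thm2} cancels the $m$ produced by the commutator, leaving exactly $\widehat{E}_{n_1+n_j}(\tau)$ as defined in \eqref{eqhatE}.
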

The square bracket notation $h[n]$ implicitly refers to the 'good' basis of $M$ and will be explained in detail in Section~\ref{cylinder}.\ Note that if we replace $u$ by a word in round bracket modes $h(n)$, for example as in (\ref{evenh}), then the resulting trace function becomes unpalatable - a sum of expressions like (\ref{sqM}) with varying $p$.\
Since $Z_M(\tau)$ is modular, albeit of weight $-\tfrac{1}{2}$ (cf. (\ref{ptnfuncs})) we see from (\ref{sqM})  that the source of the failure of modularity is the possible occurrence of the only quasimodular Eisenstein series
$E_2(\tau)$ (see Section~\ref{SS2.1}).

\medskip
Our main results solve Problem (c) for the symmetrized VOAs
$M^+$ and $V_L^+$, i.e., we give explicit formulas for the $1$-point functions associated to a certain basis for each of these VOAs.\ It transpires that these $\ZZ_2$-fixed point VOAs are not well-suited to Zhu recursion and our principal technique hinges on adapting the Zhu recursion to a $\ZZ_2$-orbifold setting.\ To explain this it will help to consider a
general VOA $V$ admitting an involutive automorphism $t$. Thus there is a decomposition 
\begin{eqnarray*}
V= V^+\oplus V^{-}
\end{eqnarray*}
where the summands are the $\pm 1$-eigenspaces of $t$, and of course $V^+$ is a subVOA of $V$.\ Now recall the definition of $2$-point functions for $V$:
\begin{eqnarray}\label{new2point}
&&F_V((u, z_1), (v, z_2), \tau):= \Tr_V Y_V(q_1^{L(0)}u, q_1)Y_V(q_2^{L(0)}v, q_2)q^{L(0)-c/24}\quad (u, v\in V)
\end{eqnarray}
where we set $q_i:=e^{z_i}$, which is in fact a sort of Jacobi form in the variables $\tau$ and $z:=z_1-z_2$.\ In what is perhaps the crucial part of Zhu recursion, one seeks a formula describing $F_V$
in terms of modular data and $1$-point functions for $V$.\ This step was isolated in \cite{MT1, MT2} and called \textit{Zhu reduction}.\ One has the usual Zhu reduction formulas 
which may be applied both to $F_V$ and $F_{V^+}$ (cf. Section \ref{SZR}) but this is not quite enough for our purposes.\ We observed that expressions of the following ilk:
\begin{eqnarray}\label{verynew2point}
&&F_{V^+}((u, z_1), (v, z_2), \tau)\quad   (u, v\in V^{-})
\end{eqnarray}
make perfectly good sense. It amounts to an expansion of the domain of definition of the $2$-point function for $V^+$.\ Better than that, the method of proof of the original Zhu reduction
formula also applies (with some adjustments) with(\ref{verynew2point}) in place of (\ref{new2point}).\ This is what we call $\ZZ_2$-twisted Zhu reduction.\ Because of the added complexity, the 
$\ZZ_2$-twisted reduction formula (Theorem \ref{thmZ22} in
 Subsection \ref{SSZRreduc}), is correspondingly more complicated than its untwisted precursor.\ And similarly there is a general $\ZZ_2$-twisted recursion, which is Theorem \ref{thmZ2Z}.\
 Solving the twisted recursion can be awkward, but it is manageable for $M^+$ and $V_L^+$.

\medskip
Consider the case when $V=M$.\
 The $1$-point functions for $M^+$
are described in Theorem \ref{thmM+}.\ The general answer is a more complicated version of Theorem \ref{thmMT} and we will not reproduce the exact result here.\ Suffice it to say that every $1$-point function
$Z_{M^+}(u, \tau)$ is a linear combination of $Z_{M^+}(\mathbf{1}, \tau)$ and $Z_{M}(\mathbf{1}, \tau)$ with coefficients which are quasimodular forms on $\Gamma_0(2)$.\
Expressions for the VOA characters  $Z_{M^+}(\mathbf{1}, \tau)$ and $Z_{M}(\mathbf{1}, \tau)$ are well-known (cf.\ Lemma \ref{lemchars}).\ They are both expressible in terms of the $\eta$-function,
but $Z_{M^+}(\mathbf{1}, \tau)$ is itself \textit{not} a modular form of any single weight but rather one of mixed weight, i.e., a sum of modular forms of different weights, $0$ and $-k/2$ in this case.

\medskip
The explicit $1$-point functions for $V_L^+$ are yet more complicated than those for $M^+$, even though they are true modular forms (on a congruence subgroup).\ The
formulas in question  are established in Theorems \ref{thmlattrecur} and \ref{thmVL+trace} together with Remark \ref{rmkfalphatrace}.

\medskip
By the aforementioned works of Zhu \cite{Z}, Huang \cite{H}, Dong-Ng \cite{DN}, and Dong-Li-Mason \cite{DLM} the 1-point functions $Z_{V_L^+}(u,\tau)$ for a homogeneous state $u$ of weight $k$ in the square-bracket grading (again see Section~\ref{cylinder}) is a modular form of the same weight $k$ for some congruence subgroup of $\SL_2(\ZZ)$. We can indeed deduce this fact independently from our explicit results in Theorems~\ref{thmlattrecur} and \ref{thmVL+trace} as well.\ For those states covered by Theorem~\ref{thmlattrecur}, the modularity is immediate from the formula together with Remark~\ref{rmkfalphatrace}. For those considered in Theorem~\ref{thmVL+trace}, modularity is indeed not quite obvious at first glance, as our formula involves non-trivial combinations of quasimodular forms which define modular forms in a non-trivial way. We prove this in Proposition~\ref{propmod} using the theory of Jacobi-like forms (cf.\  Section~\ref{secjacobi}).

\medskip

The rest of the paper is organised as follows: In Section~\ref{S2} we recall some necessary background material on modular forms and elliptic functions (Section~\ref{SS2.1}), Jacobi-like forms (Section~\ref{secjacobi}), and VOAs (Sections~\ref{SS2.2} to \ref{SS2.3}). In Section~\ref{cylinder} we introduce the concept of VOAs \emph{on a cylinder}, which essentially yields a different grading of a VOA $V$ which turns out to be very convenient in our context. Section~\ref{SZR} recalls standard Zhu recursion theory, a new $\ZZ_2$-twisted version of which we then prove in Section~\ref{SZRnew}. Finally, Section~\ref{sec1point} contains our main results, which are explicit formulas for the $1$-point functions in the symmetrized Heisenberg algebra (Section~\ref{sec1pointM}) and in symmetrized lattice VOAs (Section~\ref{secsymlat}).

\section{Notation and Background}\label{S2}
The statement of our main results use a fair amount of notation. Some of it, especially that pertaining to certain modular forms and elliptic-type functions,
is nonstandard and tailor-made for our situation.\ All of this notation is explained here along with some background.

\subsection{Modular forms and elliptic-type functions}\label{SS2.1} Let $\mathbf{H}$ denote the complex upper-half plane, 
$\tau$ a typical element in $\mathbf{H}$ and $q:=e^{2\pi i \tau}$.\ 
The Dedekind eta-function is defined as
\begin{eqnarray*}
\eta(\tau)=q^{1/24}\prod_{n\geq 1} (1-q^n).
\end{eqnarray*}
The Bernoulli numbers $B_k$ are defined by 
\begin{eqnarray*}
\sum_{k\geq 0} \frac{B_k}{k!}z^k:=\frac{z}{(e^z-1)}.
\end{eqnarray*}
The Eisenstein series for positive integers $k$ are defined by
\begin{eqnarray*}
E_k(\tau):= \left\{ \begin{array}{cc}
-\frac{B_k}{k!}+\frac{2}{(k-1)!}\sum_{n\geq 1} \sigma_{k-1}(n)q^n,\ & k\ \mbox{even}\\
0, & k\ \mbox{odd}
\end{array}  \right.
\end{eqnarray*}
with $\sigma_{k-1}(m):=\sum_{d\mid m} d^{k-1}$ denotes the usual divisor sum function.\ We also need certain level $2$ Eisenstein series defined for positive $k$ by
\begin{eqnarray*}
&&F_k(\tau):=2E_k(2\tau)-E_k(\tau).
\end{eqnarray*}

Note that $E_k(\tau)$ is a \emph{modular form} of weight $k$ for the full modular group $\mathrm{SL}(\ZZ)$ as long as $k\neq2$.\ Recall that a modular form of some (integer) weight $k$ for a finite index subgroup $\Gamma\leq\mathrm{SL}_2(\ZZ)$ is a holomorphic function on $\mathbf H$ that satisfies the transformation law
$$f\left(\frac{a\tau+b}{c\tau+d}\right) = (c\tau+d)^kf(\tau)$$
for all $\left(\begin{smallmatrix} a & b \\ c & d\end{smallmatrix}\right)\in\Gamma$ and $\tau\in\mathbf H$, as well as certain growth conditions. For background on modular forms we refer the reader e.g. to \cite{123} or one of the many textbooks on the subject. 
For $k=2$, $E_2$ is not actually modular, but rather a \emph{quasimodular} form of weight $2$. We refrain from giving the precise definition of this term (for this see for instance \cite[Section 5.3]{123}), but rather note that $E_2(\tau)+\frac{1}{4\pi\Im(\tau)}$ transforms like a modular form of weight $2$ for $\SL_2(\ZZ)$ 
(without being holomorphic of course).
Note that the functions $F_k$ are modular forms of weight $k$ for $\Gamma_0(2):=\left\{\left(\begin{smallmatrix} a & b \\ c & d\end{smallmatrix}\right) \: :\: c\equiv 0\pmod 2\right\}$ for every $k$, including $k=2$. 
 
 \medskip
For each pair $(m, n)$ of positive integers we introduce renormalized Eisenstein series 
\begin{eqnarray}
&&\widehat{E}_{m+n}(\tau):= (-1)^{n+1}n{m+n-1\choose n}E_{m+n}(\tau), \label{eqhatE}\\
&&\widehat{F}_{m+n}(\tau):= (-1)^{n+1}n{m+n-1\choose n}F_{m+n}(\tau)\label{eqhatF}.
\end{eqnarray}
Observe that these functions are symmetric in $m$ and $n$.

\begin{rmk} These particular quasimodular forms are ubiquitous throughout the present paper, and that makes the notation very convenient, however a little care is needed
when using it.\ Thus each one of $\widehat{E}_{1+5}(\tau), \widehat{E}_{2+4}(\tau), \widehat{E}_{3+3}(\tau)$ is an integral multiple of $E_6(\tau)$, but the particular multiples are 
\textit{not} the same. 
\end{rmk}

\medskip
We also make use of certain elliptic-type functions as in \cite{MT1,MT2}.\ 
We shall need these functions as both Lambert series and Laurent series.\ The most basic of them is defined as a Lambert series
\begin{eqnarray}\label{P1def}
&&\ \ \  P_1(z, \tau):= \tfrac{1}{2}+\sum_{0\neq n\in\ZZ} \frac{e^{nz}}{(1-q^n)}=-\tfrac{1}{z}+\sum_{k\geq 1} E_{2k}(\tau)z^{2k-1}=E_2(\tau)z-\zeta(z, \tau)
\end{eqnarray}
where $\zeta(z, \tau)$ is the Weierstrass zeta function for the lattice $\Lambda=2\pi i(\ZZ\oplus \ZZ\tau)$, i.e.
\begin{eqnarray}\label{eqzetadef}
\zeta(z,\tau)=\frac 1z+\sum_{\omega\in \Lambda}\left(\frac{1}{z-\omega}+\frac{1}{\omega}+\frac{z}{\omega^2}\right),\quad z\in\CC\setminus\Lambda.
\end{eqnarray}

 In \cite{MT1} this function was introduced and used with a constant $-\tfrac{1}{2}$ in the Lambert series, although this typo makes no difference to the calculations in \textit{loc.\ cit.} since only the $m^{th}$ derivatives with $m\geq1$ (with respect to $z$) intervene in the proofs there.\ But this no longer pertains in 
our present context and it is important that the correct  constant $+\tfrac{1}{2}$ is in place.\  For the reader's convenience the expression as a Laurent series is established in Lemma \ref{lemPQ1} below.\ See also \cite[Eq.(3.11)]{Z}.\ The derivatives in question are 
\begin{eqnarray}\label{eqP1m}
P_1^{(m)}(z, \tau)=  \frac{(-1)^{m+1}}{z^{m+1}} +m!\sum_{k\geq 1} {2k-1\choose m} E_{2k}(\tau)z^{2k-m-1} \ \ \  (m\geq 0).
\end{eqnarray}

We shall also need a related function
\begin{eqnarray*}
Q_1(z, \tau):= \sum_{n\in\ZZ} \tfrac{e^{nz}}{1+q^n},
\end{eqnarray*}
whose higher $z$-derivatives have series expansions as follows (cf.\ Lemma \ref{lemPQ1}):
\begin{eqnarray}\label{Q1def}
&&\ \ \ \ Q_1^{(m)}(z, \tau):=\sum_{n\in\ZZ}\frac{n^me^{nz}}{1+q^n}=\frac{(-1)^{m+1}}{z^{m+1}}      +m!\sum_{k\geq 1} {2k-1\choose m} F_{2k}(\tau)z^{2k-m-1} \ \ (m\geq0).
\end{eqnarray}

\begin{lem}\label{lemPQ1}The following hold:\\
(a) Laurent series expansions for $P_1(z, \tau)$ and $Q_1(z, \tau)$ are as in (\ref{P1def}) and (\ref{Q1def}) respectively.\\
(b)\ $P_1^{(m)}(z, \tau)$ and $Q_1^{(m)}(z, \tau)$ for $m\geq0$ are odd functions if $m$ is even and even functions if $m$ is odd.\\
(c) We have $Q_1(z, \tau) =2P_1(z, 2\tau) - P_1(z, \tau)$ and $Q_1(z,\tau)$ is an elliptic function for the lattice $2\pi i(\ZZ\oplus 2\tau\ZZ)$.
\end{lem}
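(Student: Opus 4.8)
The plan is to treat the three parts in the order (a), (c), (b): the Laurent expansions in (a) are the engine, the identity and quasi-periodicity in (c) both feed back into (a) and settle ellipticity, and the parity statement (b) then reads off instantly. For part (a) I would derive the expansion of $P_1$ directly and obtain $Q_1$ for free from the first identity of (c). Starting from the Lambert series (\ref{P1def}), split $\sum_{0\neq n}$ into $n>0$ and $n<0$ and use $\tfrac{1}{1-q^{-n}}=-\tfrac{q^n}{1-q^n}$ (for $n>0$) to fold the two halves into $\sum_{n>0}\tfrac{e^{nz}-q^ne^{-nz}}{1-q^n}$. Rewriting the numerator as $e^{nz}(1-q^n)+q^n(e^{nz}-e^{-nz})$ peels off the geometric series $\sum_{n>0}e^{nz}=\tfrac{1}{e^{-z}-1}$, whose Laurent tail is read from the Bernoulli generating function $\tfrac{z}{e^z-1}=\sum_k\tfrac{B_k}{k!}z^k$, together with the remainder $\sum_{n>0}\tfrac{q^n}{1-q^n}(e^{nz}-e^{-nz})$. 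In the remainder, expanding $e^{nz}-e^{-nz}$ and using $\sum_{n>0}\tfrac{n^{2k-1}q^n}{1-q^n}=\sum_{N\geq1}\sigma_{2k-1}(N)q^N$ produces, as coefficient of $z^{2k-1}$, the quantity $\tfrac{2}{(2k-1)!}\sum_N\sigma_{2k-1}(N)q^N=E_{2k}(\tau)+\tfrac{B_{2k}}{(2k)!}$. The decisive bookkeeping is that the constant $+\tfrac12$ of (\ref{P1def}), the $-\tfrac12$ from $\tfrac{1}{e^{-z}-1}$, and the two families of $\tfrac{B_{2k}}{(2k)!}z^{2k-1}$ terms cancel in pairs, leaving exactly $-\tfrac1z+\sum_{k\geq1}E_{2k}(\tau)z^{2k-1}$; this is precisely where the corrected constant matters. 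The formulas for $P_1^{(m)}$ and $Q_1^{(m)}$ then follow by differentiating term by term, via $\tfrac{d^m}{dz^m}z^{2k-1}=m!\binom{2k-1}{m}z^{2k-1-m}$ and $\tfrac{d^m}{dz^m}z^{-1}=(-1)^m m!\,z^{-m-1}$ governing the polar term.

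For part (c) I would first prove the identity in the Lambert picture: replacing $\tau$ by $2\tau$ sends $q\mapsto q^2$, and the elementary identity $\tfrac{2}{1-q^{2n}}-\tfrac{1}{1-q^n}=\tfrac{1}{1+q^n}$ (valid for every $n\neq0$, with the constants matching since $2\cdot\tfrac12-\tfrac12=\tfrac12$) gives $2P_1(z,2\tau)-P_1(z,\tau)=\tfrac12+\sum_{0\neq n}\tfrac{e^{nz}}{1+q^n}=Q_1(z,\tau)$. Feeding this into part (a) and using $F_{2k}=2E_{2k}(2\tau)-E_{2k}(\tau)$ at once yields the Laurent expansion $Q_1=-\tfrac1z+\sum_{k\geq1}F_{2k}(\tau)z^{2k-1}$ required in (a). For ellipticity I would establish the two quasi-periodicities of $P_1$: the periodicity $P_1(z+2\pi i,\tau)=P_1(z,\tau)$ is immediate from the Lambert series as $e^{2\pi in}=1$, while $P_1(z+2\pi i\tau,\tau)=P_1(z,\tau)+1$ follows from $P_1=E_2(\tau)z-\zeta(z,\tau)$ together with the quasi-periodicity $\zeta(z+\omega_j)=\zeta(z)+\eta_j$ of the Weierstrass $\zeta$-function (whose quasi-periods $\eta_1,\eta_2$ are not to be confused with the Dedekind $\eta$): the $2\pi i$-periodicity forces $\eta_1=2\pi iE_2(\tau)$, and the Legendre relation $\eta_1\omega_2-\eta_2\omega_1=2\pi i$ then gives $\eta_2=2\pi i\tau E_2(\tau)-1$, so the additive corrections cancel down to $+1$.

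Applying these with modulus $2\tau$ as well, and using that $4\pi i\tau=2\cdot 2\pi i\tau$ is twice the $\tau$-period, one computes $Q_1(z+4\pi i\tau,\tau)=2\bigl(P_1(z,2\tau)+1\bigr)-\bigl(P_1(z,\tau)+2\bigr)=Q_1(z,\tau)$, while $Q_1(z+2\pi i,\tau)=Q_1(z,\tau)$ is clear; hence $Q_1$ is elliptic for $2\pi i(\ZZ\oplus2\tau\ZZ)$. Part (b) is then a one-line consequence of (a): every exponent of $z$ occurring in $P_1^{(m)}$ and $Q_1^{(m)}$ equals $2k-1-m$ (including the polar term, $k=0$), which has the parity of $m+1$, so for $m$ even all exponents are odd and the functions are odd, while for $m$ odd all exponents are even and the functions are even.

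The geometric-series and differentiation bookkeeping is routine. The main obstacle is the ellipticity in (c): the Lambert series for $Q_1$ converges only in a horizontal strip of width $2\pi\,\Im(\tau)$, which does not contain both $z$ and $z+4\pi i\tau$, so invariance under the second period cannot be seen by a naive term-by-term shift and must be routed through the meromorphic continuation, i.e. through the quasi-periods of $\zeta$ and the exact constant $+1$ in $P_1(z+2\pi i\tau,\tau)=P_1(z,\tau)+1$. Pinning down that constant (and the sign in the Legendre relation) is the crux, since it is exactly what makes the additive corrections cancel to leave genuine double periodicity rather than mere quasi-periodicity.
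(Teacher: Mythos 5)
Your proof is correct, but it takes a genuinely different route from the paper's in its two substantive parts. For (a), you compute the Laurent expansion of $P_1$ directly from the Lambert series (folding $n<0$ into $n>0$, peeling off the geometric series via the Bernoulli generating function, and converting $\sum_{n>0}n^{2k-1}q^n/(1-q^n)$ into divisor sums); the paper instead first proves the identity $P_1(z,\tau)=E_2(\tau)z-\zeta(z,\tau)$ using the cotangent partial-fraction decomposition and a rearranged, absolutely convergent form of the Weierstrass $\zeta$-series, and then reads off the Laurent expansion from the classical expansion of $\zeta$. For the ellipticity in (c), you use the quasi-periods of $\zeta$ together with the Legendre relation to get $P_1(z+2\pi i\tau,\tau)=P_1(z,\tau)+1$ and then check that the additive constants cancel in $2P_1(z,2\tau)-P_1(z,\tau)$; the paper instead invokes Eisenstein's non-holomorphic completion $\zeta^*(z,\tau)=P_1(z,\tau)-\Re(z)/(2\pi\Im(\tau))$, which transforms elliptically, and observes that the non-holomorphic corrections cancel in the combination $2\zeta^*(z,2\tau)-\zeta^*(z,\tau)$. (The algebraic identity $Q_1=2P_1(z,2\tau)-P_1(z,\tau)$ is proved by essentially the same $q$-partial-fraction manipulation in both arguments, though your version, using $\tfrac{2}{1-q^{2n}}-\tfrac{1}{1-q^n}=\tfrac{1}{1+q^n}$ directly, avoids the paper's self-referential step of solving for $Q_1$.) Your approach buys a more elementary, self-contained proof of (a), and you correctly flag the convergence-strip obstruction that forces the second-period invariance to go through meromorphic continuation rather than a term-by-term shift. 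The one point you should make explicit is that your argument for (c) leans on the identity $P_1=E_2z-\zeta$, which your proof of (a) does not itself establish: it follows from your Laurent expansion combined with the classical expansion $\zeta(z,\tau)=\tfrac1z-\sum_{k\geq2}E_{2k}(\tau)z^{2k-1}$ for the lattice $2\pi i(\ZZ\oplus\tau\ZZ)$ (i.e.\ the identification of the lattice Eisenstein sums with the $q$-series $E_{2k}$), a citation of the same nature as the paper's appeal to Weil; with that reference inserted, your chain of deductions is complete.
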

\begin{proof}  To prove all parts of the Lemma it suffices to establish the   series expansion for $P_1(z, \tau)$ and prove part (c).\ We begin with the latter.\
We have
\begin{eqnarray*}
&&Q_1(z, \tau)= \frac{1}{2}+ \sum_{0\neq n\in\ZZ}\frac{e^{nz}}{1-q^{2n}}-\sum_{0\neq n\in\ZZ}\frac{e^{nz}q^n}{1-q^{2n}}\\
&&=P_1(z, 2\tau)  -  \frac{1}{2}    \left\{ \sum_{0\neq n\in\ZZ}\frac{e^{nz}}{1-q^{n}} -   \sum_{0\neq n\in\ZZ}\frac{e^{nz}}{1+q^{n}}      \right\}        \\
&&=  P_1(z, 2\tau)  -  \frac{1}{2} \left\{ \left(\frac{1}{2}+ \sum_{0\neq n\in\ZZ}\frac{e^{nz}}{1-q^{n}}\right) -   \sum_{n\in\ZZ}\frac{e^{nz}}{1+q^{n}}      \right\}   \\
&&= P_1(z, 2\tau)-\frac{1}{2}P_1(z, \tau) + \frac{1}{2}Q_1(z, \tau).\ 
\end{eqnarray*}
The first claim of part (c) follows immediately.\ For the second claim we note that it is a classical result dating back to Eisenstein that the function
$$\zeta^*(z,\tau)=\zeta(z,\tau)-zE_2(\tau) -\frac{1}{2\pi \Im(\tau)}\Re(z)=P_1(z,\tau)-\frac{1}{2\pi \Im(\tau)}\Re(z)$$
transforms like an elliptic function for the lattice $\Lambda$ (see for instance \cite{Rolen, Weil}), except that it is no longer meromorphic in $z$.  It follows therefore that 
$$Q_1(z,\tau)=2P_1(z,2\tau)-P_1(z,\tau)=2\zeta^*(z,2\tau)-\zeta^*(z,\tau)$$
is indeed elliptic for the sublattice $2\pi i(\ZZ\oplus 2\tau\ZZ)$ of $\Lambda$, since the non-holomorphic terms in $\zeta^*$ cancel.

\medskip
We now show the representation for $P_1$ given in \eqref{P1def}. For this we begin by noting the following identity, which is a direct consequence of the well-known partial fraction decomposition of the cotangent function (see e.g. \cite[Satz III.7.13]{Freitag}),
\begin{eqnarray}\label{eqcot}
-\frac 12\cdot \frac{e^z+1}{1-e^z}=\frac{1}{z}+\sum_{n\geq 1} \left(\frac{1}{z-2\pi in}+\frac{1}{z+2\pi in}\right),\quad z\in\CC\setminus 2\pi i\ZZ,
\end{eqnarray}
where the series on the right-hand side converges absolutely uniformly on any compact subset of $\CC\setminus 2\pi i\ZZ$.  By fixing the order of summation in the definition of the Weierstrass $\zeta$-function, one obtains the following expression for each $z\in\CC\setminus\Lambda$ (see for instance \cite[Eq. (8)]{Zemel}).
\begin{gather}\label{eqzetaconv}
\begin{aligned}
\zeta(z,\tau)-zE_2(\tau)=&\frac 1z+\sum_{n\geq 1} \left(\frac{1}{z-2\pi in}+\frac{1}{z+2\pi in}\right)\\
                       & \qquad+\sum_{m\geq 1}\left[\frac{1}{z+2\pi im\tau}+\sum_{n\geq 1} \left(\frac{1}{z+2\pi im\tau-2\pi in}+\frac{1}{z+2\pi im\tau+2\pi in}\right)\right.\\
                       &\qquad\qquad\left.\frac{1}{z-2\pi im\tau}+\sum_{n\geq 1} \left(\frac{1}{z-2\pi im\tau-2\pi in}+\frac{1}{z-2\pi im\tau+2\pi in}\right)\right],
                       \end{aligned}
\end{gather}
where each of the sums on the right-hand side is absolutely convergent.  Using \eqref{eqcot}, we therefore find that
$$\zeta(z,\tau)-E_2(\tau)z=-\frac 12\left[\frac{e^z+1}{1-e^z}+\sum_{m\geq 1}\left(\frac{e^zq^m+1}{1-e^zq^m}+\frac{e^zq^{-m}+1}{1-e^zq^{-m}}\right)\right].$$
For $-2\pi\Im(\tau)<\Re(z)<0$, we can rewrite this using the usual geometric series as
\begin{align*}
 &-\frac 12\left[\frac{e^z+1}{1-e^z}+2\sum_{m\geq 1}\sum_{n\geq 1}(e^{nz}-e^{-nz})q^{mn}\right]\\
=&-\frac 12\left[\frac{e^z+1}{1-e^z}+2\sum_{n\geq 1}(e^{nz}-e^{-nz})\frac{q^n}{1-q^n}\right]\\
=&-\frac 12\left[1+2\sum_{n\geq 1}e^{nz}+2\sum_{n\geq 1}(e^{nz}-e^{-nz})\frac{q^n}{1-q^n}\right]\\
=&-\sum_{n\geq 1}\left(\frac{e^{nz}}{1-q^n}-\frac{e^{-nz}q^n}{1-q^n}\right)-\frac 12\\
=&-\sum_{n\neq 0} \frac{e^{nz}}{1-q^n}-\frac 12\\
=&-P_1(z,\tau),
\end{align*}
showing the last equation in \eqref{P1def}. The first one follows immediately from the well-known Laurent expansion of $\zeta(z,\tau)$ (see for instance \cite[Chapter III, \S 7]{Weil}).
\end{proof}

\subsection{Theta functions and Jacobi-like forms}\label{secjacobi}
The theta function of an even lattice $L$ of rank $k$ with a positive definite bilinear form $(\ ,\ )$ is defined by
$$\theta_L(\tau)=\sum_{\alpha\in L}q^{(\alpha,\alpha)/2}.$$
Historically, such functions are the first examples of modular forms,  going back at least to the works of Euler and they are still a vital tool in the study of quadratic forms and lattices.  In Section~\ref{secsymlat} we shall encounter the following variant of the theta function of $L$,
\begin{gather}\label{eqthetavm}
\theta_L(\tau,v,m)=\sum_{\alpha\in L} (v,\alpha)^mq^{(\alpha,\alpha)/2},
\end{gather}
for any vector $v\in\CC\otimes L$ and nonnegative integer $m\geq 0$.  Notice that $\theta_L(\tau,v,m)$ is identically $0$ whenever $m$ is odd and that $\theta_L(\tau,v,0)=\theta_L(\tau)$. A famous result by Hecke and Schoeneberg \cite{Hecke,Schoeneberg} states that if $(v,v)=0$ these functions $\theta_L(\tau,v,m)$ are indeed holomorphic modular forms of weight $k/2+m$ with respect to the group $\Gamma_0(N)$, where $N$ is the smallest positive integer such that $NG(L)^{-1}$ with $G(L)$ denoting the Gram matrix of $L$, has integral entries which are even on the diagonal, called the \emph{level} of $L$, with some (explict) multiplier system depending on the lattice. In fact, for $m>0$, $\theta_L(\tau,v,m)$ is even a cusp form.

\medskip
There is an analogous statement for non-isotropic vectors $v$, i.e. where $(v,v)\neq 0$. It is clear from the definition that one can then renormalize $v$ such that $(v,v)=1$ without affecting the modularity properties. The transformation law is then most easily formulated in terms of so-called \emph{Jacobi-like forms}.

\medskip
These objects
have been introduced by Zagier \cite{Zagier}. By definition these are holomorphic functions
\begin{gather}\label{eqphi}
\phi(\tau,X)=\sum_{n=0}^\infty \phi_n(\tau)(2\pi iX)^n
\end{gather}
on $\mathbf H\times\CC$ satisfying the transformation law
$$\phi\left(\frac{a\tau+b}{c\tau+d},\frac{X}{(c\tau+d)^2}\right)=(c\tau+d)^k\exp\left(\frac{mcX}{c\tau+d}\right)\phi(\tau,X), \quad \begin{pmatrix}
a & b \\ c & d
\end{pmatrix}\in\Gamma\leq\SL_2(\ZZ),$$
for some integer $k$, called the \emph{weight}, and some complex number $m\in\CC$ called the \emph{index}.\ This definition can of course be modified in a straightforward way to allow for multiplier systems.\ We then have the following result (see \cite[Theorem 1]{DMtheta}):

\begin{thm}\label{thmSchoeneberg}
Let $L$ be an even lattice of rank $k$ and $v\in\CC\otimes L$. Then the function 
\begin{gather}\label{eqThetaX}
\Theta_L(\tau,v,X):=\sum_{m\geq 0}\frac{2^m}{(2m)!}\theta_L(\tau,v,2m)(2\pi iX)^m
\end{gather}
is a Jacobi-like form of weight $k/2$ and index $(v,v)$.
\end{thm}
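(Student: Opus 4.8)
The plan is to recognise $\Theta_L(\tau,v,X)$ as the restriction of the classical vector-valued Jacobi theta function of $L$ to a complex line, so that the asserted transformation law falls out of the classical theta transformation. For $z\in\CC\otimes L$ put $\widetilde\theta_L(\tau,z):=\sum_{\alpha\in L}q^{(\alpha,\alpha)/2}e^{2\pi i(\alpha,z)}$, which is holomorphic on $\mathbf H\times(\CC\otimes L)$. Restricting to $z=\xi v$ with $\xi\in\CC$ and Taylor-expanding the exponential gives $\widetilde\theta_L(\tau,\xi v)=\sum_{n\geq0}\tfrac{(2\pi i\xi)^n}{n!}\theta_L(\tau,v,n)$; since $\theta_L(\tau,v,n)=0$ for odd $n$, only the even moments survive. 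Comparing coefficients with the definition \eqref{eqThetaX} then shows $\Theta_L(\tau,v,X)=\widetilde\theta_L(\tau,\xi v)$ precisely when $\xi^2=X/(\pi i)$, equivalently $\Theta_L(\tau,v,X)=\sum_{\alpha\in L}q^{(\alpha,\alpha)/2}\cosh\!\big(\sqrt{4\pi iX}\,(\alpha,v)\big)$, which by the symmetry $\alpha\mapsto-\alpha$ of the lattice sum equals $\sum_{\alpha\in L}q^{(\alpha,\alpha)/2}e^{\sqrt{4\pi iX}\,(\alpha,v)}$. As only even powers of $\xi$ appear, this depends on $X$ alone and is a single-valued holomorphic function on $\mathbf H\times\CC$, of the shape \eqref{eqphi} with holomorphic coefficients.

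Next I would invoke the classical transformation law for $\widetilde\theta_L$, namely the vector-valued form of Jacobi's theta transformation (cf.\ Hecke--Schoeneberg \cite{Hecke,Schoeneberg}): for $\gamma=\left(\begin{smallmatrix}a&b\\c&d\end{smallmatrix}\right)$ in the congruence subgroup attached to $L$,
$$\widetilde\theta_L\!\left(\frac{a\tau+b}{c\tau+d},\frac{z}{c\tau+d}\right)=\chi(\gamma)(c\tau+d)^{k/2}\exp\!\left(\frac{\pi ic(z,z)}{c\tau+d}\right)\widetilde\theta_L(\tau,z),$$
with $\chi$ the usual theta-multiplier; this is where both the weight $k/2$ and the index factor originate. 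Substituting $z=\xi v$ turns $z/(c\tau+d)$ into the scalar substitution $\xi\mapsto\xi/(c\tau+d)$, hence $X\mapsto X/(c\tau+d)^2$ under $X=\pi i\xi^2$; and $(z,z)=\xi^2(v,v)=\tfrac{X}{\pi i}(v,v)$, so the exponential collapses to $\exp\!\big(\tfrac{(v,v)cX}{c\tau+d}\big)$. Matching against the transformation law defining a Jacobi-like form of weight $k$ and index $m$, this is exactly weight $k/2$ and index $(v,v)$, the multiplier $\chi$ being permitted by the clause allowing multiplier systems.

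The computation is thus largely bookkeeping, and I expect the one genuinely delicate point to be the constant in the index factor: one must carry the normalisation $\xi^2=X/(\pi i)$ carefully through $\exp(\pi ic(z,z)/(c\tau+d))$ to verify that the $\pi i$ cancels exactly, yielding index $(v,v)$ rather than some scalar multiple of it. A secondary point is the legitimacy of replacing $\cosh$ by the exponential, and hence of the formal square-root $\sqrt{X}$: this rests on the evenness of the lattice sum in $\xi$, which is precisely what guarantees that $\Theta_L$ is single-valued in $X$. Holomorphy and the power-series form are then inherited directly from $\widetilde\theta_L$, so no growth estimate beyond convergence of the classical theta series is required.
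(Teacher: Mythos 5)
Your proof is correct, but note that there is nothing in the paper to compare it against: the paper does not prove Theorem~\ref{thmSchoeneberg} at all, it simply quotes it from \cite[Theorem 1]{DMtheta}. Your derivation is the natural one and is complete modulo a single classical input. The identification $\Theta_L(\tau,v,X)=\widetilde\theta_L(\tau,\xi v)$ under $X=\pi i\xi^2$ is exact, since $2^m(2\pi iX)^m=(4\pi iX)^m=(2\pi i\xi)^{2m}$, the odd moments vanish by the symmetry $\alpha\mapsto-\alpha$, and because only $\xi^2$ enters, the restriction is a single-valued holomorphic function of $X$ of the shape \eqref{eqphi}. Granting the classical transformation law
\[
\widetilde\theta_L\left(\frac{a\tau+b}{c\tau+d},\frac{z}{c\tau+d}\right)=\chi(\gamma)\,(c\tau+d)^{k/2}\exp\left(\frac{\pi ic\,(z,z)}{c\tau+d}\right)\widetilde\theta_L(\tau,z),\qquad \gamma\in\Gamma_0(N),
\]
the substitution $z=\xi v$ sends $X$ to $X/(c\tau+d)^2$ and collapses the exponential factor to $\exp\bigl((v,v)cX/(c\tau+d)\bigr)$, which is precisely the Jacobi-like transformation of weight $k/2$ and index $(v,v)$ with multiplier $\chi$, as permitted by the paper's definition. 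Two caveats, neither fatal. First, the entire analytic content of the theorem now resides in the displayed law, and your attribution of it is loose: Hecke \cite{Hecke} and Schoeneberg \cite{Schoeneberg} prove modularity of theta series with harmonic (spherical) polynomial insertions, not the two-variable Jacobi-type transformation above; the latter is classical (it follows from Poisson summation and holomorphic continuation in $z$) but should be cited as such from the Jacobi-form literature, or one can simply fall back on \cite{DMtheta} as the paper does. Second, when the rank $k$ is odd, the factor $(c\tau+d)^{k/2}$ requires a choice of square root that must be absorbed into the multiplier system; you pass over this silently, but so does the paper, whose definition of Jacobi-like forms nominally assumes integral weight.
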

It is easy to deduce from the definition that a function $\phi(\tau,X)$ as in \eqref{eqphi} is a Jacobi-like form of weight $k$ and index $0$ if, and only if, the coefficient functions $\phi_n(\tau)$ are all modular of weight $k+2m$, so the aforementioned result by Hecke and Schoeneberg follows directly from Theorem~\ref{thmSchoeneberg}.

We shall also need the following Jacobi-like form,
\begin{gather}\label{eqEhat}
\widetilde E(\tau,X)=\exp(E_2(\tau)\cdot (-2\pi iX)).
\end{gather}
The following is easily seen from the well-known transformation properties of the Eisenstein series $E_2$ under modular transformations.

\begin{lem}\label{lemEhat}
The function $\widetilde E(\tau,X)$ is a Jacobi-like form of weight $0$ and index $1$ for the full modular group.
\end{lem}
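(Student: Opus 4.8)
The plan is to verify the defining functional equation of a Jacobi-like form directly, using only the quasimodular transformation law of $E_2$. First I would record this transformation in the normalization used in this paper. Since here $E_2(\tau)=-\tfrac{1}{12}+2\sum_{n\geq1}\sigma_1(n)q^n=-\tfrac{1}{12}\big(1-24\sum_{n\geq1}\sigma_1(n)q^n\big)$ differs by the factor $-\tfrac{1}{12}$ from the classically normalized Eisenstein series, the familiar holomorphic anomaly rescales accordingly, yielding
$$E_2\left(\frac{a\tau+b}{c\tau+d}\right)=(c\tau+d)^2E_2(\tau)-\frac{c(c\tau+d)}{2\pi i}$$
for every $\left(\begin{smallmatrix}a&b\\c&d\end{smallmatrix}\right)\in\SL_2(\ZZ)$. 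This is exactly the content of the statement already recalled in Section~\ref{SS2.1} that $E_2(\tau)+\tfrac{1}{4\pi\Im(\tau)}$ transforms with weight $2$: I would deduce the holomorphic anomaly above by the standard computation comparing $(c\tau+d)^2$ with $|c\tau+d|^2$ via $\Im(\gamma\tau)=\Im(\tau)/|c\tau+d|^2$, so that the non-holomorphic correction cancels and only the cocycle $-\tfrac{c(c\tau+d)}{2\pi i}$ survives.

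Next I would substitute this into $\widetilde E(\tau,X)=\exp\big(E_2(\tau)\cdot(-2\pi iX)\big)$. Writing $\gamma=\left(\begin{smallmatrix}a&b\\c&d\end{smallmatrix}\right)$ and $\gamma\tau=\frac{a\tau+b}{c\tau+d}$, the exponent of $\widetilde E\big(\gamma\tau,\,X/(c\tau+d)^2\big)$ becomes
$$-2\pi i\cdot\frac{X}{(c\tau+d)^2}\cdot E_2(\gamma\tau)=-2\pi i\cdot\frac{X}{(c\tau+d)^2}\left[(c\tau+d)^2E_2(\tau)-\frac{c(c\tau+d)}{2\pi i}\right]=-2\pi iXE_2(\tau)+\frac{cX}{c\tau+d}.$$
Exponentiating and factoring the two summands gives $\widetilde E\big(\gamma\tau,\,X/(c\tau+d)^2\big)=\exp\!\big(cX/(c\tau+d)\big)\,\widetilde E(\tau,X)$, which is precisely the Jacobi-like transformation law with weight $k=0$ and index $m=1$, as claimed. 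Holomorphy on $\mathbf H\times\CC$ is then immediate, since $\widetilde E(\tau,X)=\sum_{n\geq0}\frac{(-E_2(\tau))^n}{n!}(2\pi iX)^n$ is entire in $X$ with coefficient functions $\phi_n(\tau)=(-E_2(\tau))^n/n!$ holomorphic on $\mathbf H$, matching the required shape \eqref{eqphi}.

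I expect the only delicate point to be the bookkeeping of constants in the anomaly term: with this paper's normalization of $E_2$ the cocycle is $-\tfrac{c(c\tau+d)}{2\pi i}$ rather than the more commonly quoted multiple of $\tfrac{1}{\pi}c(c\tau+d)$, and any error in this constant would spoil the exact cancellation that produces the index-$1$ exponential $\exp\!\big(cX/(c\tau+d)\big)$. Once the factor is pinned down correctly against the weight-$2$ transformation of $E_2(\tau)+\tfrac{1}{4\pi\Im(\tau)}$, the verification reduces to the one-line substitution above.
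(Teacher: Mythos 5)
Your proof is correct and is exactly the argument the paper leaves implicit: the paper's proof consists of the single remark that the lemma follows "easily from the well-known transformation properties of $E_2$," and your substitution of the quasimodular anomaly $E_2(\gamma\tau)=(c\tau+d)^2E_2(\tau)-\tfrac{c(c\tau+d)}{2\pi i}$ (correctly rescaled to this paper's normalization $E_2=-\tfrac1{12}+2\sum\sigma_1(n)q^n$, consistent with the stated weight-$2$ transformation of $E_2(\tau)+\tfrac{1}{4\pi\Im(\tau)}$) into the exponential is precisely that verification. The constant bookkeeping you flag is handled correctly, so the cancellation producing $\exp\bigl(cX/(c\tau+d)\bigr)$, i.e.\ weight $0$ and index $1$, goes through as claimed.
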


\subsection{General definitions for VOAs}\label{SS2.2}
For the convenience of the reader, we recall here and in the following Subsection some basic facts and definitions from the theory of VOAs as far as they are relevant for our purposes. There are several introductory texts on the subject that the interested reader may find beneficial, such as \cite{FLM,LL,MT1}  among many others.

\medskip
A \emph{vertex operator algebra} (VOA) over the complex numbers has  four pieces of data:
\begin{itemize}
\item a $\ZZ$-graded $\CC$-vector space $V=\bigoplus_{k\in\ZZ} V_k$, called the \emph{Fock space}, where $\dim V_k<\infty$ and $V_k=\{0\}$ for $k$ sufficiently small. The elements in $V$ are called \emph{states}.
\item a \emph{state-field correspondence}, which is a $\CC$-linear map
$$Y: V\to \End(V)\llbracket z^{\pm 1}\rrbracket,\quad v\mapsto Y(v,z):=\sum_{n\in\ZZ} v(n)z^{-n-1}$$
associating to each state $v$ a collection of endomorphisms $v(n):V\to V$, $n\in\ZZ$, called the \textit{modes} of $v$.\ The formal distribution $Y(v,z)$ is called a \emph{field}  or \emph{vertex operator}.\ For any pair of states $u, v$, modes are required to satisfy the truncation condition $u(n)v=0$ for $n\gg0$. 
\item a distinguished state $\mathbf{1}\in V$, called the \emph{vacuum}, with the property that for any $v\in V$ we have 
$$Y(v,z)\mathbf 1=v+O(z) \quad\text{(creativity)}.$$
\item a distinguished state $\omega\in V$,  the \emph{Virasoro vector}, such that 
$$Y(\omega,z)=\sum_{n\in\ZZ} \omega(n)z^{-n-1}=\sum_{n\in\ZZ} L(n)z^{-n-2}$$
and the modes $L(n)$ are required to close on a \emph{Virasoro algebra}, i.e., we have the commutator relations
\begin{gather}\label{eqvirasoro}
[L(m),L(n)]=(m-n)L(m+n)+\delta_{m+n,0}(m^3-m)\frac{c}{12}\kappa
\end{gather}
for a central element $\kappa$ and a scalar $c\in\CC$ called the \emph{central charge} of $V$.
\end{itemize}
The fields $Y(u,z),Y(v,z)$ for states $u,v\in V$ are required to be \emph{mutually local}, i.e. there exists an integer $K>0$ such that
\begin{gather}\label{eqlocality}
(z_1-z_2)^K[Y(u,z_1),Y(v,z_2)]=0.
\end{gather}
In addition, the Virasoro operator $L(0)$ must respect the grading,  i.e. if $v\in V_k$ ---we say that $v$ is homogeneous of \emph{(conformal) weight} $k$--- then we must have $L(0)v=kv.$
It is not hard to show that the vacuum vector $\mathbf 1$ is homogeneous of weight $0$ and the Virasoro state $\omega$ is homogenous of weight $2$.\ More generally, if $v$ is homogeneous of weight $k$ then the modes $v(n)$ respect the grading in that the restriction
\begin{gather}\label{eqgrading}
v(n): V_m\rightarrow V_{m+k-n-1}
\end{gather}
is well-defined.\ The operator $T:=L(-1)$, often referred to as the \emph{translation operator}, satisfies the commutation relations
\begin{gather}\label{eqtranslation}
[T,Y(v,z)]=\partial_z Y(v,z),\quad T\mathbf 1=0\quad \text{(translation covariance)}.
\end{gather}

One important consequence of the above axioms which is usually given as an additional axiom for a VOA is the so-called \emph{Jacobi } or \textit{Borcherds} identity, the modal form of which is
\begin{multline}\label{eqborcherds}
\sum_{i\geq 0}(-1)^{i+1}\binom{\ell}{i}(u(\ell+m-i)v(n+i)-(-1)^\ell v(\ell+n-i)u(m+i))\\
=\sum_{i\geq 0}\binom{m}i (u(\ell+i)v)(m+n-i),\quad (\ell,m,n\in\ZZ).
\end{multline}

\medskip
Due to the axiomatically required truncation property for positive modes the summations on both sides in \eqref{eqborcherds}, when applied to any state $w\in V$, are in fact finite and thus well-defined. 
We note two important special cases of \eqref{eqborcherds}. For $\ell=0$ we obtain the \emph{commutator formula}
\begin{gather}\label{eqcommutator}
[u(m),v(n)]=\sum_{i\geq 0}\binom mi (u(i)v)(m+n-i),
\end{gather}
and for $m=0$ we obtain the \emph{associator formula}
\begin{gather}\label{eqassociator}
(u(\ell)v)(n)=\sum_{i\geq 0} (-1)^{i+1}\binom\ell i (u(\ell-i)v(n+i)-(-1)^{\ell}v(\ell+n-i)u(i)).
\end{gather}
In the presence of the other axioms,  \eqref{eqcommutator} and \eqref{eqassociator} together are equivalent to \eqref{eqborcherds}.
\subsection{Background, notation and conventions for Heisenberg and lattice VOAs}\label{SS2.3}

\medskip
We describe the VOAs with which we will be mainly concerned with in the present paper.

\medskip
Let $H=\CC^k$ be a $k$-dimen\-sional complex linear space equipped with a nondegenerate, symmetric bilinear form
$( \ \ , \ \ ): H\times H \rightarrow \CC$.\
$H$ determines the Heisenberg VOA of rank (or central charge) $k$. The corresponding Fock space is an infinite-dimensional symmetric algebra $M:= S(H_1\oplus H_2 \hdots)$ where $H_i$ is a copy of $H$ in degree $i$.\ We do not usually
adorn the symbol $M$ with additional notation: the context will always make clear exactly which Heisenberg we are dealing with.\ 

\medskip
The vertex operator for $h\in H$ is denoted
\begin{eqnarray*}
Y_M(h, z)=Y(h, z)=\sum_{n\in\ZZ} h(n)z^{-n-1}.
\end{eqnarray*}
 If $h_1, ..., h_k$ is an orthonormal basis of $H$ with respect to the bilinear form then $M$ has a basis consisting of states 
 \begin{eqnarray}\label{Mstates}
 h_{i_1}(-n_1)... h_{i_p}(-n_p)\mathbf{1}
 \end{eqnarray}
  for all $p\geq 0$ and integers $n_{i_j}\geq1$.\ The conformal weight of such a state is given by $\sum_{j=1}^p n_j$.\ The modes $h_i(n)$ satisfy the canonical commutator relations
\begin{eqnarray}\label{commrelns}
[h_i(m), h_j(n)]=\delta_{i, j}\delta_{m, -n} m Id_M,
\end{eqnarray}
In particular the order in which the modes $h_{i_j}(-n_j)$ appear in (\ref{Mstates}) is immaterial.

\medskip
Let $L$ be an \textit{even lattice} (which will always mean that $L$ is also integral and positive-definite) with bilinear form $(\ \  , \ \ ): L\times L\rightarrow \ZZ$.\ The $\CC$-linear extension defines a bilinear form on $H:=\CC\otimes L$ and the lattice theory VOA has Fock space 
\begin{eqnarray*}
V_L:=M\otimes\CC[L] = \oplus_{\alpha\in L} M\otimes e^{\alpha}
\end{eqnarray*}
where we conflate $e^0$ with the vacuum state $\mathbf{1}$.\ A set of basis vectors in $V_L$ may be taken to be $u\otimes e^{\alpha}$ where $u$ ranges over a basis of $M$ and $\alpha\in L$.\ If $u\in M$ has weight $k$, then $u\otimes e^\alpha$ has weight $k+(\alpha,\alpha)/2$.\ Note that $M=M\otimes\mathbf{1}\subseteq V_L$ is a subVOA.\ The vertex operators for generic states in $V_L$ are awkward to handle and one of the points of the present paper is that we will not need them to calculate the trace functions.\ 

\medskip
We frequently identify $\mathbf{1}\otimes e^{\alpha}$ with $e^{\alpha}$, so that also $h_{i_1}(-n_1)... h_{i_p}(-n_p)\mathbf{1}\otimes e^{\alpha}=h_{i_1}(-n_1)... h_{i_p}(-n_p)e^{\alpha}$.\
The states $e^{\alpha}$ are singular vectors for $M$ in the sense that we have 
\begin{eqnarray}\label{eqhnealpha}
h(n)e^{\alpha} =  \delta_{n, 0} (h, \alpha) e^{\alpha}\ \ (n\geq 0).
\end{eqnarray}
The exceptional  behaviour of the zero mode $h(0)$ exhibited here is responsible for much of the complication in our trace formulas.

\medskip
Let $t:\alpha\mapsto -\alpha$ be the negating automorphism of $L$.\ There is a canonical lift of $t$ to an  involutive automorphism of $V_L$  that we continue to denote by $t$.\ Its action is defined by
\begin{eqnarray*}
t: h_{i_1}(-n_1)... h_{i_p}(-n_p)\mathbf{1}\otimes e^{\alpha}\mapsto (-1)^ph_{i_1}(-n_1)... h_{i_p}(-n_p)\mathbf{1}\otimes e^{-\alpha}.
\end{eqnarray*}
Note that $t$ leaves invariant the Heisenberg subVOA $M\subseteq V_L$.

\medskip
The two fixed-point subVOAs $V_L^+$ and $M^+$ of $V_L$ consist of the states in $V_L$ and $M$ respectively that are fixed by $t$.\  The complements, i.e., the $-1$ eigenspaces of $t$ in $M$ (resp. $V_L$) are denoted by $M^-$ (resp. $V_L^-$), so that  $M=M^+\oplus M^-$ and $V_L=V_L^+\oplus V_L^-$. 

\medskip
Evidently $M^+$ has basis consisting of the states 
\begin{eqnarray}\label{evenh}
h_{i_1}(-n_1)... h_{i_p}(-n_p)\mathbf{1}\ \ (p\ \mbox{even}).
\end{eqnarray}
As for the states in $V_L^+$, it is convenient to introduce the notation
\begin{eqnarray}\label{fgdef}
f_{\alpha}:= e^{\alpha}+e^{-\alpha}\ ;\ \ \ g_{\alpha}:= e^{\alpha}-e^{-\alpha}\ \ (0\neq \alpha\in L).
\end{eqnarray}
Then $V_L^+$ is spanned by $M^+$ together with the states
\begin{eqnarray}\label{genstates}
&&h_{i_1}(-n_1)... h_{i_p}(-n_p)\mathbf{1}\otimes f_{\alpha} \ \ \  (p\ \mbox{even})\ ;\ \ \ h_{i_1}(-n_1)... h_{i_p}(-n_p)\mathbf{1}\otimes g_{\alpha} \ \ \ (p\ \mbox{odd}).
\end{eqnarray}
There are bases with analogous shapes for the complements $M^-$ and $V_L^-$ as well.

\medskip
We now discuss $1$-point functions.\ For additional background on this topic, see \cite{MT1,MT2,Z}.\ It is convenient to work in a quite general VOA $V$
of central charge $c$ and equipped with its conformal grading
$V = \bigoplus_{n} V_n$.\ That is, $V_n$ is the $L(0)$-eigenspace for eigenvalue $n$.

\medskip
Let  $v\in V_k$.\ The \textit{zero mode}\footnote{This is something of a misnomer} of $v$ is denoted $o(v)$ and defined by
$o(v):=v(k-1)$.\ Each state  $u\in V$ is a finite sum of  homogeneous states, say $u=\sum_k v_k\ (v_k\in V_k)$, and we can extend the zero mode notation to
a map on $V$, defining $o(u):=\sum_k o(v_k)$.\ Each such zero mode leaves every homogeneous space $V_k$ invariant , so we can define a function called a $1$-point function, by the prescription
\begin{eqnarray*}
Z_V(u, \tau)=Z(u, \tau):= \Tr_V o(u)q^{L(0)-c/24} = q^{-c/24}\sum_{n} \Tr_{V_n}o(u) q^n.
\end{eqnarray*}
The \textit{partition function}, or \textit{character} of $V$ is then
\begin{eqnarray*}
Z_V(\tau):= Z_V(\mathbf{1}, \tau)=q^{-c/24}\sum_{n} \dim V_k q^n.
\end{eqnarray*}

The following particular characters occur frequently in our  calculations.
\begin{lem}\label{lemchars} Let  $L$ be an even lattice of rank $k$ with $H = \CC^k$ as before.\ Then
\begin{eqnarray*}\label{ptnfuncs}
&&Z_M(\tau)=\frac{1}{\eta(\tau)^k},\ \ \ \ \ Z_{M^+}(\tau)=\frac{1}{2}\left( \frac{1}{\eta(\tau)^k}+\frac{\eta(\tau)^k}{\eta(2\tau)^k} \right), \\
&&Z_{V_L}(\tau) = \frac{\theta_L(\tau)}{\eta(\tau)^k},\ \ \ \  Z_{V_L+}(\tau)= \frac 12 \left( \frac{\theta_L(\tau)}{\eta(\tau)^k}+  \frac{\eta(\tau)^k}{\eta(2\tau)^k}\right).
\end{eqnarray*}
Here $\theta_L(\tau):=\sum_{\alpha\in L} q^{(\alpha, \alpha)/2}$ is the theta-function of $L$.
\end{lem}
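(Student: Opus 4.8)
The plan is to treat the two Heisenberg characters first and then bootstrap to the lattice case, in each instance reducing the $+$-character to an ordinary graded trace plus a single $t$-twisted graded trace via the projection
$Z_{M^+}(\tau) = \tfrac12\bigl(\Tr_M q^{L(0)-c/24} + \Tr_M\, t\, q^{L(0)-c/24}\bigr)$,
which holds because $\tfrac12(1+t)$ is the projector onto the fixed-point space $M^+$ and $t$ commutes with $L(0)$ (being a VOA automorphism, it preserves the conformal grading).

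First I would compute $Z_M$ directly. Using the monomial basis (\ref{Mstates}), whose states have conformal weight $\sum_j n_j$, the graded dimension factorizes as a product over the independent commuting creation modes $h_i(-n)$ ($1\le i\le k$, $n\ge 1$), giving $\sum_n \dim M_n\, q^n = \prod_{n\ge 1}(1-q^n)^{-k}$. Since the central charge is $c=k$, multiplying by $q^{-k/24}$ and comparing with the defining product for $\eta$ yields $Z_M(\tau)=\eta(\tau)^{-k}$. Then for the lattice theory I would use $V_L=\bigoplus_{\alpha\in L}M\otimes e^\alpha$ together with the weight shift $\mathrm{wt}(u\otimes e^\alpha)=\mathrm{wt}(u)+(\alpha,\alpha)/2$, so that the untwisted trace factorizes as $\bigl(\sum_{\alpha\in L}q^{(\alpha,\alpha)/2}\bigr)Z_M(\tau)=\theta_L(\tau)/\eta(\tau)^k$.

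Next comes the $t$-twisted trace, which is the one genuinely computational ingredient. Since $t$ acts on the monomial (\ref{Mstates}) by the sign $(-1)^p$, the generator $h_i(-n)$ contributes a geometric series $\sum_{j\ge 0}(-1)^j q^{jn}=(1+q^n)^{-1}$, so $\Tr_M\, t\, q^{L(0)} = \prod_{n\ge 1}(1+q^n)^{-k}$. I would then rewrite this $\eta$-quotient using $\prod_{n\ge 1}(1+q^n)=\prod_{n\ge 1}(1-q^{2n})/(1-q^n)$ together with $\eta(\tau)=q^{1/24}\prod(1-q^n)$ and $\eta(2\tau)=q^{1/12}\prod(1-q^{2n})$, obtaining $\Tr_M\, t\, q^{L(0)-k/24}=\eta(\tau)^k/\eta(2\tau)^k$. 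Substituting into the projection formula gives the stated $Z_{M^+}$.

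Finally, for $V_L^+$ the key observation — and the only conceptual point — is that $t$ sends $M\otimes e^\alpha$ to $M\otimes e^{-\alpha}$, so for $\alpha\neq 0$ it is off-diagonal and contributes nothing to the trace; only the $\alpha=0$ summand survives, and there $t$ restricts to the Heisenberg involution already analysed. Hence $\Tr_{V_L}\, t\, q^{L(0)-k/24}=\eta(\tau)^k/\eta(2\tau)^k$, and the projection formula for $V_L^+$ produces the claimed expression. I expect no serious obstacle here: the only care needed is the $\eta$-bookkeeping (and the fractional $q$-powers) in the twisted Heisenberg trace, and the clean recording of the off-diagonal vanishing for $\alpha\neq 0$, which is what makes the lattice twisted trace collapse to the Heisenberg one.
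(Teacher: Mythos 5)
Your proof is correct, but it follows a somewhat different route from the paper's. You organize everything around the averaging identity $Z_{V^+}(\tau)=\tfrac12\bigl(\Tr_V q^{L(0)-c/24}+\Tr_V t\,q^{L(0)-c/24}\bigr)$ and then compute two twisted traces: mode-by-mode factorization gives $\Tr_M t\, q^{L(0)}=\prod_{n\geq1}(1+q^n)^{-k}$, and for the lattice theory you observe that $t$ maps $M\otimes e^{\alpha}$ to $M\otimes e^{-\alpha}$, so every summand with $\alpha\neq0$ is off-diagonal and the twisted trace over $V_L$ collapses to the Heisenberg one. The paper never writes down a twisted trace: for $M^+$ it counts $k$-coloured partitions of $r$ into an even number of parts via the two-variable generating function $\prod_{n\geq1}(1-tq^n)^{-k}$ evaluated at $t=\pm1$ (which is your averaging in combinatorial disguise), and for $V_L^+$ it sums characters over the explicit spanning set \eqref{genstates}, obtaining $Z_{V_L^+}=Z_{M^+}+\tfrac12\bigl(Z_{M^+}+Z_{M^-}\bigr)(\theta_L-1)$ and then using $Z_{M^+}+Z_{M^-}=Z_M$. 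The two computations are equivalent in substance---your off-diagonal-vanishing observation is exactly what makes the paper's pairing of $f_\alpha$ with $g_\alpha$ reproduce the full character $Z_M$ rather than separate $\pm$ pieces---but yours is shorter, more conceptual, and generalizes immediately to any finite-order automorphism, whereas the paper's argument is elementary and self-contained and has the side benefit of exhibiting the character of the $M^+$-module $M^-$ and the basis \eqref{genstates}, both of which are reused in the later trace computations such as Theorem~\ref{thmlattrecur}.
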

\begin{proof} The characters of $M$ and $V_L$ are well-known and we omit further discussion of these cases.\ We give a short derivation for the other two characters.\ We begin with $M^+$.\ By construction, the dimension of the weight $r$ space in $M^+$ is given by the number of $k$-coloured partitions of $r$ into an even number of parts.\  Just as in the usual proof that the generating function for the usual partition function is given by $q^{1/24}/\eta(\tau)$, one sees that the coefficient of $t^mq^r$ in the expression
$$\prod_{n\geq 1}(1-tq^n)^{-k}$$
yields precisely the number of $k$-coloured partitions of $r$ into exactly $m$ parts, wherefore it follows directly that the number of partitions of $r$ into an even (resp.\ odd) number of parts is exactly the coefficient of $q^r$ in
\begin{gather}\label{eqcolouredpartitions}
\frac{1}{2}\left(\prod_{n\geq 1}(1-q^n)^{-k}\pm \prod_{n\geq 1}(1+q^n)^{-k}\right)=\frac{1}{2}\left(\prod_{n\geq 1}(1-q^n)^{-k}\pm \prod_{n\geq 1}\left(\frac{1-q^n}{1-q^{2n}}\right)^{k}\right).
\end{gather}
This readily yields the claimed formula for the character of $Z_{M^+}$. Using the negative sign in \eqref{eqcolouredpartitions} of course yields the analogous partition function for the $M^+$-module $M^-$.

\medskip
For $V_L^+$ we notice that there is a basis as given in \eqref{genstates}.\ So we find that the partition function of $V_L^+$ is given by
\begin{multline*}
Z_{V_L^+}=Z_{M^+}+Z_{M^+}\sum_{\alpha\in L\setminus\{0\}/\pm } q^{(\alpha,\alpha)/2}+Z_{M^-}\sum_{\alpha\in L\setminus\{0\}/\pm} q^{(\alpha,\alpha)/2}\\
 =Z_{M^+}+Z_{M^+}\frac 12 (\theta_L-1)+Z_{M^-}\frac 12 (\theta_L-1).
\end{multline*}
Using \eqref{eqcolouredpartitions}, this simplifies to
\begin{gather}\label{eqpartitionVLplus}
Z_{V_L^+}(\tau)=\frac 12 \cdot \frac{\theta_L(\tau)}{\eta(\tau)^k}+\frac 12 \cdot \frac{\eta(\tau)^k}{\eta(2\tau)^k},
\end{gather}
as claimed. Note that the formula for $V_L^+$ (stated for the special case where $L=\Lambda$ is the Leech lattice) may be found e.g. in \cite[Remark 10.5.4]{FLM}.
\end{proof}

In general it is not just $1$-point functions that are of interest, but rather $N$-point functions for arbitrary $N$. We won't define them in full generality, but in order to prove Zhu recursion we  require $2$-point functions. These were defined in (\ref{verynew2point}).

\section{VOAs on a cylinder and the square bracket formalism}\label{cylinder}
Suppose that $V =\bigoplus_k V_k$ is a VOA of central charge $c$.\ We are going to define some new endomorphisms $v[n]$ for states $v\in V$, called the
square bracket modes.\ For further details see \cite{MT2,Z}.\ We merely highlight some details that we need.

\medskip
The round and square bracket modes are related as follows:\ for $m\geq 0$ and $v\in V_k$ we have
\begin{eqnarray}
&&Y[v, z] = \sum_{n\in\ZZ} v[n]z^{-n-1}:= Y(e^{zL(0)}v, e^z-1),\label{sqbrackdef} \\
&&v[m]=m!\sum_{i\geq m} c(k,i, m)v(i) \ \ (m\geq0) \label{v[m]def} 
\end{eqnarray}
where
\begin{eqnarray}\label{binomid}
{k-1+x\choose i}=: \sum_{m=0}^i c(k, i, m)x^m\ \ (i\geq 0).
\end{eqnarray}
These formulas follow  from the Definition.\ In fact, the series
(\ref{sqbrackdef}) are the vertex operators for a VOA with Fock space $V$ and the same vacuum vector as $V$.\ The Virasoro vector in the new VOA is then given by $\tilde \omega=\omega-\frac{c}{24}\mathbf 1$, so that even though it has the same Fock space, the grading is different. This is the VOA $V$ 
\textit{on the cylinder}.\ (The name derives from the change of variables $z\mapsto e^z-1$ where $z$ is a local variable on the punctured complex sphere.)
We have
\begin{eqnarray*}
&&\sum_{ m\geq 0} \frac{(k+1-x)^m}{m!}v[m]=\sum_{ m\geq 0} (k+1-x)^m\sum_{i\geq m} c(k, i,m)v(i)\\
&&=\sum_{i\geq 0}\left\{\sum_{0\leq m\leq i} (k+1-x)^mc(k, i, m) \right \}v(i) \notag\\
&&=\sum_{i\geq 0}\left\{ {2k-x\choose i} \right \}v(i)\notag\\
\end{eqnarray*}
where we used (\ref{binomid}).\ Now take $x=2k-n$ in the last identity ($n$ any integer) to obtain
\begin{eqnarray}\label{form1}
\sum_{m\geq0} \frac{(n-k+1)^m}{m!} v[m]=\sum_{i\geq 0} {n\choose i}v(i).
\end{eqnarray}

An example that we will need are the square bracket modes in the case that $V=M$.\ It is well-known, and not hard to show,  that in this case the canonical commutator relations
(\ref{commrelns}) are also satisfied if we replace $h_i(n)$ with $h_i[n]$.

\medskip
Finally, we need the following straightforward conjugation formula
\begin{eqnarray}\label{conjform}
e^{xL(0)}Y_M(v, z)e^{-xL(0)} = Y_M(e^{xL(0)}v, e^xz).
\end{eqnarray}

\section{Standard Zhu theory}\label{SZR}
For the benefit of the reader as well as to highlight the key differences with our new, twisted version of Zhu reduction, in this Section we give a detailed proof of  the original reduction and recursion  formulas following  \cite{MT1}.

\subsection{Standard Zhu reduction}  A reduction theorem expresses $2$-point functions in terms of modular data and $1$-point functions.\ 
Throughout this and the following Section let $V$ be any VOA with central charge $c$ and let $u,v$ be states in $V$. 
\begin{thm}\label{thm1}(Zhu reduction) We have
\begin{eqnarray*}
F_V((u, z_1), (v, z_2), \tau)= \Tr_V o(u)o(v)q^{L(0)-c/24}+\sum_{m\geq 1} \frac{(-1)^{m+1}}{m!}P_1^{(m)}(z_{12}, \tau)Z_V(u[m]v, \tau),
\end{eqnarray*}
where $z_{12}:=z_1-z_2=-z_{21}$.
\end{thm}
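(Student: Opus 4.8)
The plan is to compute the $2$-point function $F_V((u,z_1),(v,z_2),\tau)$ directly from its definition and extract from the $Y$-products a piece that yields the zero-mode trace together with a correction governed by the elliptic function $P_1$. First I would write out $F_V$ using \eqref{new2point}, set $q_i=e^{z_i}$, and expand the vertex operators $Y_V(q_1^{L(0)}u,q_1)$ and $Y_V(q_2^{L(0)}v,q_2)$ into modes. Using the conjugation formula \eqref{conjform} to absorb the $q_i^{L(0)}$ factors, each $Y_V(q_i^{L(0)}u,q_i)$ can be rewritten so that the accompanying powers of $q_i$ convert the ordinary modes $u(n)$ into the square-bracket modes $u[n]$ (this is exactly the change of variables $z\mapsto e^z-1$ built into \eqref{sqbrackdef}). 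Thus I expect the trace to organise itself as a bilinear expression in the square-bracket modes of $u$ and $v$, weighted by powers of $q_1/q_2=e^{z_{12}}$.

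The central manipulation is then to move one operator past $q^{L(0)}$ inside the trace and use the cyclicity of the trace to re-express $\Tr_V u[r]\,v[s]\,q^{L(0)-c/24}$ in terms of $\Tr_V v[s]\,u[r]\,q^{L(0)-c/24}$. The grading relation \eqref{eqgrading} shows that conjugation by $q^{L(0)}$ multiplies a mode by an appropriate power of $q$, so the cyclic rearrangement produces a geometric factor of the shape $(1-q^{n})^{-1}$. Summing these geometric factors over the relevant modes is precisely what generates the Lambert-series form of $P_1^{(m)}(z_{12},\tau)$ recorded in \eqref{eqP1m}, with the $m$-th derivative arising because one is tracking the $z_{12}$-dependence through the $m$-th square-bracket mode $u[m]v$. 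The commutator formula \eqref{eqcommutator} is what lets me collect the terms: when $r+s$ is such that the modes do not commute freely, the commutator contributes $(u[m]v)$-type terms, and these reassemble into $Z_V(u[m]v,\tau)$.

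Concretely, I would single out the $m=0$ contribution, which corresponds to the two zero modes sitting side by side and contributes the leading term $\Tr_V o(u)o(v)q^{L(0)-c/24}$, and treat the $m\geq 1$ contributions as the correction terms. For the latter, after the cyclicity argument each term acquires a coefficient that I recognise, via the Lambert expansion \eqref{P1def} and its derivatives \eqref{eqP1m}, as $\tfrac{(-1)^{m+1}}{m!}P_1^{(m)}(z_{12},\tau)$ multiplying $Z_V(u[m]v,\tau)$. The symmetry and parity properties in Lemma~\ref{lemPQ1}(b) serve as a useful internal check that the derivatives appearing have the correct behaviour under $z_{12}\mapsto -z_{12}$.

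The hard part will be the bookkeeping in the cyclicity step: one must carefully justify the interchange of summation and trace, control convergence in the annular region where the geometric series expansions are valid (the domain $-2\pi\Im(\tau)<\Re(z_{12})<0$ used in the proof of Lemma~\ref{lemPQ1}), and verify that the infinite sum of commutator contributions telescopes exactly into $Z_V(u[m]v,\tau)$ rather than leaving residual boundary terms. Matching the Lambert-series coefficients to the renormalised derivatives $P_1^{(m)}$ with the precise sign $(-1)^{m+1}$ and factorial normalisation is the delicate combinatorial point; here the corrected constant $+\tfrac12$ in \eqref{P1def} is essential, since the $m=0$ term now genuinely contributes. I would follow the argument of \cite{MT1} closely at this stage, since the same reduction is carried out there, adapting only the normalisation conventions to the present setup.
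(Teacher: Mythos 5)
Your strategy is essentially the paper's own: expand into modes, use cyclicity of the trace together with the conjugation formula \eqref{conjform} to produce the factors $(1-q^r)^{-1}$, use the commutator formula \eqref{eqcommutator} to generate the states $u[m]v$, and resum the resulting Lambert series into $P_1^{(m)}(z_{12},\tau)$. (The paper expands only the first vertex operator, writing $F_V=\sum_n q_1^{k-n-1}\Tr_V\{u(n)Y_V(q_2^{L(0)}v,q_2)\}q^{L(0)-c/24}$; expanding both operators, as you propose, is workable but heavier.) One mechanism is misattributed, however: $Y_V(q_i^{L(0)}u,q_i)$ is \emph{not} the square-bracket vertex operator in disguise, since $Y[u,z]=Y(e^{zL(0)}u,e^z-1)$ and the shift by $1$ is essential, so absorbing $q_i^{L(0)}$ via \eqref{conjform} does not turn $u(n)$ into $u[n]$. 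In the actual argument \eqref{conjform} serves only to move $u(n)$ past $q^{L(0)}$ (yielding the factor $q^r$ with $r=n-k+1$), and the square brackets enter through the commutator sum via the identity \eqref{form1}, namely $\sum_{i\geq0}\binom{n}{i}u(i)v=\sum_{m\geq0}\frac{r^m}{m!}u[m]v$.

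The genuine gap is your handling of the $m=0$ term. The zero-mode trace $\Tr_V o(u)o(v)q^{L(0)-c/24}$ is the $r=0$ (i.e.\ $n=k-1$) term of the mode expansion, not the $m=0$ term of the square-bracket sum; these are different indices and your sketch conflates them. After the cyclicity step, the $r\neq 0$ terms produce $\sum_{m\geq0}\frac{r^m}{m!}Z_V(u[m]v,\tau)$, whose $m=0$ piece sums to $\bigl(P_1(z_{21},\tau)-\tfrac12\bigr)Z_V(u[0]v,\tau)$, a term that does not appear in the theorem. It disappears only because the $r=0$ case of the same trace identity forces $Z_V(u[0]v,\tau)=0$, which is equation (\ref{[0]mode}) in the paper; your proposal never establishes this vanishing, so as written it terminates in a formula with a leftover $P_1$ term. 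Relatedly, your closing claim that the constant $+\tfrac12$ in \eqref{P1def} is essential ``since the $m=0$ term now genuinely contributes'' is exactly backwards for this theorem: precisely because $Z_V(u[0]v,\tau)=0$, only the derivatives $P_1^{(m)}$ with $m\geq1$ survive, and the constant is immaterial here (the paper says as much right after \eqref{P1def}). The constant matters only in the $\ZZ_2$-twisted reduction of Theorem \ref{thmZ22}, where $Q_1^{(0)}$ genuinely appears.
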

\begin{proof} Let $u\in V_k$ and recall that $q_i:=e^{z_i}\ (i=1, 2)$.\ Then from (\ref{verynew2point}),
\begin{eqnarray*}
F_V((u, z_1), (v, z_2), \tau):= \sum_{n\in\ZZ} q_1^{k-n-1} \Tr_V \left\{u(n)Y_V(q_2^{L(0)}v, q_2)\right\}q^{L(0)-c/24}.
\end{eqnarray*}

The commutator formula \eqref{eqcommutator} may alternately be stated as
\begin{eqnarray*}
&&[u(r), Y_V(v, z)] = \sum_{i\geq 0} {r\choose i} Y_V(u(i)v, z)z^{r-i}.
\end{eqnarray*}
Thus we see that
\begin{eqnarray*}
&&[u(n), Y_V(q_2^{L(0)}v, q_2)]= \sum_{i\geq 0} {n\choose i} Y_V(u(i)q_2^{L(0)}v, q_2)q_2^{n-i}\\
&&=q_2^{n-k+1}Y_V\left(q_2^{L(0)}\sum_{i\geq0} {n\choose i}u(i)v, q_2\right)\\
&&=q_2^{n-k+1}Y_V\left(q_2^{L(0)}\sum_{i\geq0} {n\choose i}u(i)v, q_2\right)\\
&&=q_2^{n-k+1}Y_V\left(q_2^{L(0)}\sum_{m\geq0} \frac{(n-k+1)^m}{m!}u[m]v, q_2\right)\\
&&=q_2^{n-k+1}\sum_{m\geq0} \frac{(n-k+1)^m}{m!}Y_V(q_2^{L(0)}u[m]v, q_2).
\end{eqnarray*}

Now we apply this formula and trace over $V$ to find that if $r:=n-k+1$ then
\begin{eqnarray}\label{formular}
&&\ \ \ \ \ \ \ \ \ \ \ \ \  \Tr_V  \left\{u(n)Y_V(q_2^{L(0)}v, q_2) \right\}q^{L(0)-c/24} \notag\\
&&\ \ \ \ \  = \Tr_V q_2^{r}\sum_{m\geq0} \frac{r^m}{m!}Y_V(q_2^{L(0)}u[m]v, q_2) q^{L(0)-c/24}+
\Tr_V  \left\{Y_V(q_2^{L(0)}v, q_2)u(n) \right\}q^{L(0)-c/24} \notag\\
&&\ \ \ \ \ \ \ \ = q_2^{r}\sum_{m\geq0} \frac{r^m}{m!}Z_V(u[m]v, \tau) +
q^r \Tr_V \left\{Y_V(q_2^{L(0)}v, q_2)q^{L(0)-c/24}u_M(n) \right\}
\end{eqnarray}
where we used (\ref{conjform}) to get the last term.\ Since $\Tr AB= \Tr BA$ we find that
\begin{eqnarray*}
(1-q^r)  \Tr_V  \left\{u_M(n)Y_V(q_2^{L(0)}v, q_2) \right\}q^{L(0)-c/24}= q_2^{r}\sum_{m\geq0} \frac{r^m}{m!}Z_V(u[m]v, \tau).
\end{eqnarray*}
So if $r\neq0$ then
\begin{eqnarray}\label{formuntw}
  \Tr_V \left\{u(n)Y_V(q_2^{L(0)}v, q_2) \right\}q^{L(0)-c/24}= \frac{q_2^{r}}{(1-q^r)}\sum_{m\geq0} \frac{r^m}{m!}Z_V(u[m]v, \tau).
\end{eqnarray}
The case $r=0$ shows that 
\begin{eqnarray}\label{[0]mode}
Z_V(u[0]v, \tau)=0
\end{eqnarray}

\bigskip

Now we return to our $2$-point function to find that
\begin{eqnarray*}
&&F_V((u, z_1), (v, z_2), \tau):= \sum_{n\in\ZZ} q_1^{k-n-1} \Tr_V \left\{u(n)Y_V(q_2^{L(0)}v, q_2)\right\}q^{L(0)-c/24}\\
&&=\sum_{n\in\ZZ,\ r\neq0} q_1^{-r} \frac{q_2^{r}}{(1-q^r)}\sum_{m\geq0} \frac{r^m}{m!}Z_V(u[m]v, \tau)+
 \Tr_V \left\{o(u)Y_V(q_2^{L(0)}v, q_2)\right\}q^{L(0)-c/24}\\
 &&=\Tr_V o(u)o(v)q^{L(0)-c/24}+\sum_{m\geq1} \frac{1}{m!}Z_V(u[m]v, \tau) \sum_{0\neq r\in\ZZ} r^m\frac{q_{12}^{-r}}{(1-q^r)}\\
  &&=\Tr_V o(u)o(v)q^{L(0)-c/24}+\sum_{m\geq1} \frac{1}{m!}Z_V(u[m]v, \tau) \sum_{0\neq r\in\ZZ} r^m\frac{q_{21}^{r}}{(1-q^r)}\\
   &&=\Tr_V o(u)o(v)q^{L(0)-c/24}+\sum_{m\geq1} \frac{1}{m!}Z_V(u[m]v, \tau) P_1^{(m)}(z_{21}, \tau)\\
    &&=\Tr_V o(u)o(v)q^{L(0)-c/24}+\sum_{m\geq1} \frac{(-1)^{m+1}}{m!}Z_V(u[m]v, \tau) P_1^{(m)}(z_{12}, \tau)
\end{eqnarray*}
where we have used (\ref{[0]mode}) as well as Lemma \ref{lemPQ1}(b).\
 This completes the proof of the Theorem.
\end{proof}

\subsection{Standard Zhu recursion}\label{SZR1}
A `recursion' expresses $1$-point functions in terms of modular data and $1$-point functions of states of lower weight.
\begin{thm}\label{thm2}(Zhu recursion) Suppose that $n\geq 1$.\ Then we have
\begin{eqnarray*}
Z_V(u[-n]v, \tau) = \delta_{n, 1}\Tr_V o(u)o(v)q^{L(0)-c/24}+ \sum_{m\geq 1}\frac{1}{m}\widehat{E}_{n+m}(\tau)Z_V(u[m]v, \tau).
\end{eqnarray*}
\end{thm}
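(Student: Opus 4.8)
The plan is to extract the Zhu recursion formula directly from the reduction theorem (Theorem~\ref{thm1}) by examining the behaviour of the $2$-point function $F_V((u,z_1),(v,z_2),\tau)$ as a formal/analytic object in the variable $z_{12}$. The key observation is that Theorem~\ref{thm1} gives a closed expression for $F_V$ in which the coefficients $Z_V(u[m]v,\tau)$ ($m\geq 1$) multiply the elliptic-type functions $P_1^{(m)}(z_{12},\tau)$, whose explicit Laurent expansions are recorded in \eqref{eqP1m}. Since $F_V$ has a natural expansion whose coefficients involve the square-bracket modes $u[-n]v$, I would compare the two expansions coefficient-by-coefficient to read off a formula for each $Z_V(u[-n]v,\tau)$.

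Concretely, the first step is to rewrite the left-hand side of the defining expansion of $F_V$ so that the modes $u[n]$ (square bracket) appear explicitly. The natural device is to use the square-bracket vertex operator $Y[u,z_1]$ in place of $Y(q_1^{L(0)}u,q_1)$; conjugation by $e^{z_1 L(0)}$ together with \eqref{sqbrackdef} repackages the double sum so that $F_V$ becomes $\sum_{n\in\ZZ} Z_V(u[n]v,\tau)\,(\text{power of }z_{12})$ plus lower-order correction terms coming from the change of variables. The coefficient of the singular power $z_{12}^{n-1}$ (equivalently, the coefficient extracting $u[-n]v$) on this side is precisely $Z_V(u[-n]v,\tau)$ for $n\geq 1$, together with the trace-of-zero-modes term $\Tr_V o(u)o(v)q^{L(0)-c/24}$ which contributes only to the $n=1$ coefficient, explaining the $\delta_{n,1}$.

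Next, on the right-hand side of Theorem~\ref{thm1} I would substitute the Laurent expansion \eqref{eqP1m},
\begin{eqnarray*}
P_1^{(m)}(z_{12},\tau)=\frac{(-1)^{m+1}}{z_{12}^{m+1}}+m!\sum_{k\geq 1}\binom{2k-1}{m}E_{2k}(\tau)z_{12}^{2k-m-1},
\end{eqnarray*}
and collect the coefficient of $z_{12}^{n-1}$ for each fixed $n\geq 1$. The principal (singular) part of each $P_1^{(m)}$ only contributes to the trace-of-zero-modes term matching on the other side, so the genuinely new content comes from the Eisenstein tail: for fixed $n$ and varying $m$, the term $z_{12}^{2k-m-1}$ equals $z_{12}^{n-1}$ precisely when $2k=n+m$, forcing $k=(n+m)/2$ and producing the factor $E_{n+m}(\tau)$. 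Gathering the constants $\tfrac{(-1)^{m+1}}{m!}\cdot m!\binom{n+m-1}{m}$ and comparing with the definition \eqref{eqhatE} of $\widehat{E}_{n+m}$ should collapse everything into $\tfrac{1}{m}\widehat{E}_{n+m}(\tau)$, giving the claimed formula.

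The main obstacle, and the step requiring genuine care, is the bookkeeping on the \emph{left}-hand side: one must verify that the coefficient of $z_{12}^{n-1}$ in the Laurent expansion of $F_V$ is exactly $Z_V(u[-n]v,\tau)$ (with the zero-mode term appearing only for $n=1$), and not some linear combination involving other modes. This is where the square-bracket formalism of Section~\ref{cylinder} does the essential work, since the whole point of passing to the cylinder is that the expansion variable $e^{z}-1$ converts round modes into square modes cleanly. A secondary subtlety is the matching of signs and binomial constants: one must confirm that the normalizing factor $(-1)^{n+1}n\binom{n+m-1}{n}$ hidden in $\widehat{E}_{n+m}$ is consistent with the symmetric rewriting $n\binom{n+m-1}{n}=m\binom{n+m-1}{m}$, which is what produces the clean $\tfrac{1}{m}$ in front. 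Once these two identifications are pinned down, the recursion follows by equating coefficients of $z_{12}^{n-1}$ on both sides of Theorem~\ref{thm1}.
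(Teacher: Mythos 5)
Your overall outline---combine Theorem~\ref{thm1} with a rewriting of $F_V$ in square-bracket modes, then equate coefficients of $z_{12}^{n-1}$---is the paper's outline, and your bookkeeping of the Eisenstein tail is right: for $n\geq 1$ only the terms with $2k=n+m$ in \eqref{eqP1m} survive, and $\tfrac{(-1)^{m+1}}{m!}\cdot m!\binom{n+m-1}{m}E_{n+m}=\tfrac{1}{m}\widehat{E}_{n+m}$ by \eqref{eqhatE} and the symmetry $n\binom{n+m-1}{n}=m\binom{n+m-1}{m}$. But there is a genuine gap at exactly the step you flag as ``the main obstacle.'' You propose to obtain the expansion $F_V=\sum_n Z_V(u[n]v,\tau)z_{12}^{-n-1}$ by ``conjugation by $e^{z_1L(0)}$ together with \eqref{sqbrackdef}.'' That cannot work: conjugation by \eqref{conjform} only rescales a single vertex operator and its insertion point, and \eqref{sqbrackdef} only re-expands one operator; neither device can merge the \emph{product} $Y_V(q_1^{L(0)}u,q_1)Y_V(q_2^{L(0)}v,q_2)$ inside the trace into vertex operators of the states $u[n]v$. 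The states $u[-n]v$ with $n\geq 1$ involve \emph{negative} modes of $u$ acting on $v$, which are invisible to any commutator- or conjugation-based manipulation. The missing ingredient is associativity: one needs the associator formula \eqref{eqassociator}, which after taking traces gives
\begin{equation*}
\Tr_V Y_V(u,z_1+z_2)Y_V(v,z_2)q^{L(0)-c/24}=\Tr_V Y_V(Y(u,z_1)v,z_2)q^{L(0)-c/24},
\end{equation*}
and only \emph{then} does conjugation by $q_2^{L(0)}$ turn the inner operator $Y(q_1^{L(0)}u,q_1-q_2)$ into $Y\bigl(e^{z_{12}L(0)}u,e^{z_{12}}-1\bigr)=Y[u,z_{12}]$. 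This is precisely the paper's Theorem~\ref{thm3}, which yields the exact identity $F_V((u,z_1),(v,z_2),\tau)=Z_V(Y[u,z_{12}]v,\tau)$.

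Two symptoms in your write-up trace back to this gap. First, your hedge that the rewriting holds ``plus lower-order correction terms coming from the change of variables'' is wrong---the identity is exact, with no correction terms---and if corrections were actually present your coefficient extraction would not close. Second, you place the term $\Tr_V o(u)o(v)q^{L(0)-c/24}$ on the left-hand side as part of the expansion of $F_V$; in fact the coefficient of $z_{12}^{n-1}$ on the left is exactly $Z_V(u[-n]v,\tau)$ and nothing else, while the zero-mode trace is the \emph{constant} term on the right-hand side of Theorem~\ref{thm1}, which is why it appears only for $n=1$ (similarly, the singular parts $(-1)^{m+1}z_{12}^{-m-1}$ of $P_1^{(m)}$ do not ``match the zero-mode term''; they match the coefficients $Z_V(u[m]v,\tau)$, $m\geq 1$, of the positive powers $z_{12}^{-m-1}$ on the left, and simply contribute nothing for $n\geq 1$). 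Once Theorem~\ref{thm3} is in place, the rest of your argument goes through and coincides with the paper's proof.
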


To begin the proof of Zhu Recursion we use another reduction Theorem:
\begin{thm}\label{thm3}  We have
\begin{eqnarray*}
&&F_V((u, z_1), (v, z_2), \tau)= Z_V(Y[u, z_{12}]v, \tau)
\end{eqnarray*}
\end{thm}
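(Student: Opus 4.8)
\textbf{Proof strategy for Theorem \ref{thm3}.}

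The plan is to expand both sides directly from the definitions and observe that they agree term by term. First I would recall that the left-hand side is given by Theorem \ref{thm1}, namely
\begin{eqnarray*}
F_V((u, z_1), (v, z_2), \tau)= \Tr_V o(u)o(v)q^{L(0)-c/24}+\sum_{m\geq 1} \frac{(-1)^{m+1}}{m!}P_1^{(m)}(z_{12}, \tau)Z_V(u[m]v, \tau).
\end{eqnarray*}
So it suffices to show that the right-hand side $Z_V(Y[u, z_{12}]v, \tau)$ produces exactly this expression. By the definition $Y[u,z]v = \sum_{n\in\ZZ} u[n]v\,z^{-n-1}$ of the square-bracket field, I would expand $Z_V(Y[u, z_{12}]v, \tau) = \sum_{n\in\ZZ} z_{12}^{-n-1} Z_V(u[n]v, \tau)$, treating the trace function as linear in its state argument.

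The key step is then to match coefficients using the Laurent expansion of $P_1^{(m)}$ given in \eqref{eqP1m}. For the \emph{negative} square-bracket modes $u[-n]v$ with $n\geq 1$, I expect to invoke the standard Zhu recursion, Theorem \ref{thm2}, to rewrite $Z_V(u[-n]v,\tau)$ in terms of $\delta_{n,1}\Tr_V o(u)o(v)q^{L(0)-c/24}$ and the quasimodular forms $\widehat E_{n+m}$ applied to the positive modes $Z_V(u[m]v,\tau)$. The purely polar and constant contributions in the Laurent series of $P_1^{(m)}$ will reassemble the $\Tr_V o(u)o(v)q^{L(0)-c/24}$ term, while the Eisenstein coefficients $E_{2k}$ appearing in \eqref{eqP1m}, together with the binomial factors packaged into the definition \eqref{eqhatE} of $\widehat E$, reproduce the recursion coefficients. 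One must also remember that $Z_V(u[0]v,\tau)=0$ by \eqref{[0]mode}, so the $n=0$ term drops out and does not interfere with the expansion.

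The main obstacle will be the bookkeeping of the two expansions of $P_1^{(m)}(z_{12},\tau)$: the right-hand side $Z_V(Y[u,z_{12}]v,\tau)$ naturally groups terms by the power $z_{12}^{-n-1}$, whereas Theorem \ref{thm1} groups them by the derivative index $m$. Reconciling these requires carefully collecting, for each fixed power of $z_{12}$, the contribution of $P_1^{(m)}$ across all $m$, and checking that the polar part $\tfrac{(-1)^{m+1}}{z_{12}^{m+1}}$ matches the negative-mode contributions while the regular part $m!\sum_{k\geq 1}\binom{2k-1}{m}E_{2k}(\tau)z_{12}^{2k-m-1}$ matches the Eisenstein coefficients dictated by Theorem \ref{thm2}. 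Once the index gymnastics is set up correctly, the identity falls out, and in fact this computation is essentially the inverse of the derivation of Theorem \ref{thm2} from Theorem \ref{thm1}; indeed the cleanest route may be to prove Theorem \ref{thm3} first as a formal identity and then \emph{deduce} Theorem \ref{thm2} from it by extracting the coefficient of $z_{12}^{n-1}$, which is likely the logical order the authors intend.
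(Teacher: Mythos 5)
There is a genuine gap here, and it is one of logical circularity rather than computation. You propose to prove Theorem \ref{thm3} by expanding $Z_V(Y[u,z_{12}]v,\tau)=\sum_{n\in\ZZ} z_{12}^{-n-1}Z_V(u[n]v,\tau)$ and then invoking Theorem \ref{thm2} to rewrite the negative-mode terms $Z_V(u[-n]v,\tau)$. But Theorem \ref{thm2} is not available at this point: in the paper its proof consists \emph{precisely} of combining Theorem \ref{thm1} with Theorem \ref{thm3} and extracting the coefficient of $z_{12}^{n-1}$. Your coefficient matching is that derivation run backwards, so it shows only that Theorems \ref{thm1}, \ref{thm2} and \ref{thm3} are mutually consistent; it cannot establish Theorem \ref{thm3} unless you first supply an independent proof of Theorem \ref{thm2}, which neither you nor the paper provides. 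Your closing sentence --- that the cleanest route is to prove Theorem \ref{thm3} ``first as a formal identity'' and then deduce Theorem \ref{thm2} from it --- correctly identifies the paper's actual logical order, but it concedes exactly the point at issue: your proposal contains no mechanism for proving Theorem \ref{thm3} independently.

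The missing idea is that Theorem \ref{thm3} is proved directly from the VOA axioms, with no Eisenstein series and no reference to Theorems \ref{thm1} or \ref{thm2} at all. One applies the associator formula \eqref{eqassociator}: for $k$ large enough, $(z_1+z_2)^kY_V(u,z_1+z_2)Y_V(v,z_2)=(z_1+z_2)^kY_V(Y(u,z_1)v,z_2)$, and under the trace the factors $(z_1+z_2)^k$ drop out, giving
\begin{equation*}
\Tr_V Y_V(q_1^{L(0)}u,q_1)Y_V(q_2^{L(0)}v,q_2)q^{L(0)-c/24}=\Tr_V Y_V\bigl(Y(q_1^{L(0)}u,q_1-q_2)q_2^{L(0)}v,q_2\bigr)q^{L(0)-c/24}.
\end{equation*}
Then the conjugation formula \eqref{conjform} pulls $q_2^{L(0)}$ out of the inner vertex operator, converting it into $Y(e^{z_{12}L(0)}u,e^{z_{12}}-1)$, which by the definition \eqref{sqbrackdef} of the square-bracket vertex operators is exactly $Y[u,z_{12}]$; the trace then reads $Z_V(Y[u,z_{12}]v,\tau)$. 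In other words, the square brackets are designed so that the change of variables $z\mapsto e^z-1$ effected by conjugation with $q_2^{L(0)}$ turns round-bracket associativity into the statement of Theorem \ref{thm3}; this step is entirely absent from your proposal. Two further cautions about your bookkeeping, had the matching route been legitimate: first, the polar powers $z_{12}^{-m-1}$ correspond to the \emph{non-negative} modes $u[m]v$, $m\geq 0$, in the square-bracket expansion (not the negative modes, as you state), while the $\Tr_V o(u)o(v)q^{L(0)-c/24}$ term arises from the $\delta_{n,1}$ contribution at the constant power $z_{12}^0$; second, the polar term of \eqref{eqP1m} as printed is missing a factor of $m!$ (it is the $m$-th derivative of $-1/z$), so matching taken at face value would appear to fail by exactly that factor.
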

\begin{proof} We begin by giving the associativity formula \eqref{eqassociator}, which  says that for all large enough integers $k$ we have
\begin{eqnarray*}
(z_1+z_2)^kY_V(u, z_1+z_2)Y_V(v, z_2)=(z_1+z_2)^k Y_V(Y(u, z_1)v, z_2)
\end{eqnarray*}
Upon taking traces the factors $(z_1+z_2)^k$ conveniently disappear, leading to
\begin{eqnarray}\label{assoc1}
\Tr_V Y_V(u, z_1+z_2)Y_V(v, z_2)q^{L(0)-c/24}=\Tr_V Y_V(Y(u, z_1)v, z_2)q^{L(0)-c/24}.
\end{eqnarray}

\medskip
Turning to the $2$-point function (\ref{verynew2point}), we now calculate using (\ref{assoc1}) that
\begin{eqnarray*}
&&F_V((u, z_1), (v, z_2), \tau)=  \Tr_V Y_V(q_1^{L(0)}u, q_1)Y_V(q_2^{L(0)}v, q_2)q^{L(0)-c/24}\\
&&\ \ \ \ \ \ =  \Tr_V Y_V(Y(q_1^{L(0)}u, q_1-q_2)q_2^{L(0)}v, q_2)q^{L(0)-c/24}\\
&&\ \ \ \ \ \ =  \Tr_V Y_V (q_2^{L(0)}Y(q_2^{-L(0)}q_1^{L(0)}u, q_2^{-1}(q_1-q_2))v, q_2)q^{L(0)-c/24}
\end{eqnarray*}
(\mbox{using (\ref{conjform}) with $M=V$})
\begin{eqnarray*}
&&= \Tr_V Y_V(q_2^{L(0)}Y(e^{z_{12}L(0)}u, e^{z_{12}}-1)v, q_2)q^{L(0)-c/24}\\
&&=\Tr_V Y_V(q_2^{L(0)}Y[u, z_{12}]v, q_2)q^{L(0)-c/24}\\
&&=Z_V (Y[u, z_{12}]v,\tau).
\end{eqnarray*}
The Theorem is proved.
\end{proof}

\medskip\noindent
\textit{Proof of Zhu recursion formula}: Combining Theorems \ref{thm1} and \ref{thm3} yields
\begin{eqnarray*}
 Z_V(Y[u, z_{12}]v, \tau)=
\Tr_V o(u)o(v)q^{L(0)-c/24}+\sum_{m\geq 1} \frac{(-1)^{m+1}}{m!}P_1^{(m)}(z_{12}, \tau)Z_V(u[m]v, \tau).
\end{eqnarray*}

Now take coefficients of $z_{12}^{n-1}$ on both sides to obtain
\begin{eqnarray*}
&&Z_V(u[-n]v, \tau) = \delta_{n, 1} \Tr_V o(u)o(v)q^{L(0)-c/24} + \\
&&\ \ \ \ \ \ \  \sum_{m\geq 1} \frac{(-1)^{m+1}}{m!}\left(m!{m+n-1\choose m}E_{m+n}(\tau)  \right)Z_V(u[m]v, \tau),
\end{eqnarray*}
and Theorem \ref{thm2} is now proved.\ $\hfill \Box$

\begin{rmk}
It is straightforward to define $1$- and $2$-point functions for a $V$-module $A$ instead of the VOA $V$ itself. The formulas in Theorems~\ref{thm1}-\ref{thm3} hold in this situation as well (replacing subscript $V$ by $A$), with almost verbatim the same proofs.\ See \cite{FHL} for additional background.\ 
We use Theorem~\ref{thm2} for modules in the proof of Lemma~\ref{lemVL}.
\end{rmk}

\section{$\ZZ_2$-twisted Zhu reduction}\label{SZRnew}  In this Section we prove our version of Zhu reduction formula adapted to involutive automorphisms.

\subsection{Modified $2$-point functions}
For the remainder of the  paper we will be operating in the following context, as discussed in the Introduction:
\begin{eqnarray}\label{setup}
&& V\ \mbox{is a VOA admitting an involutive automorphism $t$,}\\
&& V= V^+\oplus V^{-}\ \mbox{is the decomposition of $V$ into $\pm 1$-eigenspaces for $t$}.\notag
\end{eqnarray}
 $V^+$ is a subVOA of $V$ and $V^{-}$ is a $V^+$-module.

\medskip
Fix a pair of states $u, v\in V^{-}$.\ We repeat the definition  of our modified $2$-point function from the Introduction:
\begin{eqnarray}\label{new2}
Z_{V^+}((u, z_1), (v, z_2), \tau):=\Tr_{V^+}Y_V(q_1^{L(0)}u, q_1)Y_V(q_2^{L(0)}v, q_2)q^{L(0)-c/24}
\end{eqnarray}

\medskip
One of the needed adjustments is facilitated by the 
 following elementary observation from linear algebra.
\begin{lem}\label{lemspectrace} Let the finite-dimensional  linear space $X=X_1\oplus X_2$ decompose as indicated,
and let $f, g\in End(X)$ be a pair of endomorphisms mapping $X_1\rightarrow X_2$ and $X_2\rightarrow X_1$.\ 
 The following equivalent statements hold: (a) $\Tr_{X_2}fg= \Tr_{X_1}gf$;  (b) $\Tr_{X_1}(fg+gf)=\Tr_X fg$.\ In particular, (c) if $\Tr_X fg=0$ then
 $\Tr_{X_1}fg=-\Tr_{X_1}gf$.
\end{lem}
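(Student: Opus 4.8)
The plan is to prove the stronger statement (a), namely $\Tr_{X_2}fg = \Tr_{X_1}gf$, and then deduce (b) and (c) as formal consequences. The key observation is that $f$ and $g$ swap the two summands: $f,g: X_1 \to X_2$ and $X_2 \to X_1$. Consequently the composite $fg$ maps $X_1 \to X_1$ and $X_2 \to X_2$, so $fg$ is block-diagonal with respect to the decomposition $X = X_1 \oplus X_2$, and likewise $gf$ is block-diagonal. This is what makes the restricted traces $\Tr_{X_1}$ and $\Tr_{X_2}$ well-defined in the first place.

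To establish (a), I would choose a basis of $X_1$ and a basis of $X_2$. Writing $f$ and $g$ as the off-diagonal blocks of their matrices, say $f$ sends $X_1 \to X_2$ via a matrix $F_1$ and $X_2 \to X_1$ via $F_2$, and similarly $g$ via $G_1, G_2$, one computes directly that $fg|_{X_2} = F_1 G_2$ as an endomorphism of $X_2$ and $gf|_{X_1} = G_2 F_1$ as an endomorphism of $X_1$. The identity $\Tr_{X_2}(F_1 G_2) = \Tr_{X_1}(G_2 F_1)$ is then just the elementary fact that $\Tr(AB) = \Tr(BA)$ for rectangular matrices $A = F_1$ (size $\dim X_2 \times \dim X_1$) and $B = G_2$ (size $\dim X_1 \times \dim X_2$). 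This is the heart of the argument, and it is essentially immediate once the block structure is in place.

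For (b), I would write $\Tr_X fg = \Tr_{X_1}fg + \Tr_{X_2}fg$ using block-diagonality of $fg$, and then substitute $\Tr_{X_2}fg = \Tr_{X_1}gf$ from part (a) to obtain $\Tr_X fg = \Tr_{X_1}(fg + gf)$, which is exactly (b). The equivalence of (a) and (b) is thus transparent. For (c), I would simply note that if $\Tr_X fg = 0$ then (b) gives $\Tr_{X_1}(fg + gf) = 0$, i.e. $\Tr_{X_1}fg = -\Tr_{X_1}gf$.

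Since everything reduces to the cyclicity of trace for rectangular matrices together with the block-diagonal structure forced by the hypothesis that $f, g$ exchange the summands, there is no serious obstacle; the only point requiring minor care is to be explicit that $fg$ and $gf$ really do preserve each summand, so that the restricted traces in the statement make sense and the decomposition $\Tr_X = \Tr_{X_1} + \Tr_{X_2}$ is valid.
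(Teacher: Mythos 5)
Your proof is correct and follows essentially the same approach as the paper: the paper declares (a) standard and deduces (b) exactly as you do, via $\Tr_{X_1}(fg+gf)=\Tr_{X_1}fg+\Tr_{X_2}fg=\Tr_X fg$, with (c) immediate. The only difference is that you spell out the block-matrix verification of (a) (cyclicity of trace for rectangular blocks), which the paper leaves implicit.
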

\begin{proof} Part  (a) is standard.\ As for  (b),  using (a) we have $\Tr_{X_1}(fg+gf)=\Tr_{X_1}fg+\Tr_{X_2}fg=\Tr_X fg$.
\end{proof}
\subsection{Statement and proof of  $\ZZ_2$-twisted Zhu reduction formula}\label{SSZRreduc}
We have the following statement of our modified $2$-point functions from the previous section,  which is to be viewed as a direct analogue of Theorem~\ref{thm1}.
\begin{thm}\label{thmZ22}($\ZZ_2$-twisted Zhu reduction) Let $u,v \in V^-$.\ 
The $2$-point function satisfies
\begin{eqnarray*}
&&\hspace{3cm} F_{V^+}((u, z_1), (v, z_2), \tau)=\\
&&=\frac{1}{2}\left\{ \Tr_V o(u)o(v)q^{L(0)-c/24}\right\}+\sum_{m\geq0} (-1)^{m+1}\frac{1}{m!}Q_1^{(m)}(z_{12}, \tau)Z_{V^+}(u[m]v, \tau) \\
&&+\frac 12 \sum_{m\geq 1}(-1)^{m+1}\frac{1}{m!} \left( P_1^{(m)}(z_{12}, \tau) -Q_1^{(m)}(z_{12}, \tau)\right) Z_V(u[m]v, \tau).
\end{eqnarray*}
\end{thm}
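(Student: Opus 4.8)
The plan is to mimic the proof of the untwisted reduction formula (Theorem~\ref{thm1}) as closely as possible, but to carefully track how the presence of the automorphism $t$ disrupts the cyclicity-of-trace step. The key observation is that for $u,v\in V^-$, the modes $u(n)$ and $v(n)$ each \emph{swap} the summands $V^+$ and $V^-$, because a mode of a state in $V^-$ intertwines the two eigenspaces of $t$. This means that the operator $Y_V(q_2^{L(0)}v,q_2)$ maps $V^+\to V^-$, so the single operator $u(n)Y_V(q_2^{L(0)}v,q_2)$ maps $V^+\to V^+$ and the trace over $V^+$ is well-defined, but the intermediate trace manipulations now involve both $V^+$ and $V^-$ in an essential way.

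\textbf{Key steps.} First I would expand $F_{V^+}$ as $\sum_{n}q_1^{k-n-1}\Tr_{V^+}\{u(n)Y_V(q_2^{L(0)}v,q_2)\}q^{L(0)-c/24}$ exactly as in Theorem~\ref{thm1}, and reuse verbatim the commutator computation that rewrites $[u(n),Y_V(q_2^{L(0)}v,q_2)]$ in terms of square-bracket modes $u[m]v$ and the factor $q_2^{r}r^m/m!$ with $r=n-k+1$. The crucial difference arises when I commute $u(n)$ past everything to recover a term with $u(n)$ on the \emph{right}: rather than getting back $\Tr_{V^+}$ of the same operator, the trace-swap now lands on $\Tr_{V^-}$, because $u(n)$ carries $V^+\to V^-$. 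This is exactly the situation of Lemma~\ref{lemspectrace}, applied with $X=V$, $X_1=V^+$, $X_2=V^-$, $f=u(n)$ (restricted appropriately) and $g=Y_V(q_2^{L(0)}v,q_2)$. Using part~(b) of that lemma, I would express $\Tr_{V^+}\{u(n)Y_V(\cdots)\}$ in terms of the full trace $\Tr_V\{u(n)Y_V(\cdots)\}$ (which is governed by the ordinary reduction giving $P_1^{(m)}$) together with the symmetric combination $\Tr_{V^+}(fg+gf)$. The sign flip $(1-q^r)\mapsto(1+q^r)$ that produces $Q_1^{(m)}$ in place of $P_1^{(m)}$ comes precisely from this swap: when $u(n)$ is moved to the right across the $V^+/V^-$ decomposition, the factor $q^r$ enters with the opposite sign relative to the untwisted case.

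\textbf{Assembling the formula.} After isolating the $m\geq 1$ contributions, I would sum the resulting geometric-type series $\sum_{0\neq r}r^m q_{21}^r/(1+q^r)$ and identify it with $Q_1^{(m)}(z_{21},\tau)$ via the definition~\eqref{Q1def}; the overall combination splits naturally into a $Q_1^{(m)}$ piece weighted by $Z_{V^+}(u[m]v,\tau)$ and a correction $\tfrac12(P_1^{(m)}-Q_1^{(m)})$ weighted by the full $Z_V(u[m]v,\tau)$, exactly matching the claimed statement. The $m=0$ term must be handled separately: unlike in the untwisted case, where $Z_V(u[0]v,\tau)=0$ by~\eqref{[0]mode}, here $u[0]v$ need not trace to zero over $V^+$, which is why the $Q_1^{(m)}$ sum in the theorem begins at $m=0$ rather than $m=1$. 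The constant term $\tfrac12\Tr_V o(u)o(v)q^{L(0)-c/24}$ likewise carries a factor of $\tfrac12$ precisely because Lemma~\ref{lemspectrace}(b) converts the $V^+$-trace of the zero-mode product into half the full $V$-trace.

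\textbf{Main obstacle.} I expect the delicate point to be the bookkeeping around the $m=0$ mode and the constant term: one must correctly apply Lemma~\ref{lemspectrace} in the borderline case $r=0$ and track which pieces survive over $V^+$ versus the full space $V$, since the clean vanishing~\eqref{[0]mode} of the untwisted theory is no longer available. Getting the sign conventions in $P_1$ versus $Q_1$ aligned --- recalling from Lemma~\ref{lemPQ1}(c) that $Q_1(z,\tau)=2P_1(z,2\tau)-P_1(z,\tau)$ arises exactly from replacing $(1-q^r)$ by $(1+q^r)$ --- and confirming that the even/odd parity statement of Lemma~\ref{lemPQ1}(b) produces the correct power of $(-1)^{m+1}$ throughout, will require care but should be routine once the trace-swap structure is set up correctly.
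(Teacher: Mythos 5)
Your proposal is correct and follows essentially the same route as the paper's own proof: the identical expansion of $F_{V^+}$ and reuse of the commutator formula, the application of Lemma~\ref{lemspectrace}(b) with $X_1=V^+$, $X_2=V^-$ to repair the failure of trace cyclicity (which is exactly what produces the $(1+q^r)$ denominators and hence $Q_1^{(m)}$), the injection of the untwisted reduction over the full space $V$ to generate the $\tfrac12\left(P_1^{(m)}-Q_1^{(m)}\right)Z_V$ correction terms, and the same careful treatment of the $r=0$ case and the surviving $m=0$ term. No gaps to report.
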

\begin{proof} 
We  start out as in the proof of Theorem \ref{thm1},  with $u\in V_k^-$ but using our modified $2$-point function:
\begin{eqnarray*}
F_{V^+}((u, z_1), (v, z_2), \tau):= \sum_{n\in\ZZ} q_1^{k-n-1} \Tr_{V^+} \left\{u(n)Y_{V}(q_2^{L(0)}v, q_2)\right\}q^{L(0)-c/24}.
\end{eqnarray*}

The next commutator formula is unchanged:
\begin{eqnarray*}
&&[u(n), Y_{V}(q_2^{L(0)}v, q_2)]=q_2^{n-k+1}\sum_{m\geq0} \frac{(n-k+1)^m}{m!}Y_{V}(q_2^{L(0)}u[m]v, q_2).
\end{eqnarray*}

Now we apply this formula and trace over $V^+$ to find that if $r:=n-k+1$ then
\begin{eqnarray*}
&&\ \ \ \ \ \ \ \ \ \ \ \ \  \Tr_{V^+}  \left(u(n)Y_V(q_2^{L(0)}v, q_2) \right)q^{L(0)-c/24} \\
&&=\Tr_{V^+} q_2^{r}\sum_{m\geq0} \frac{r^m}{m!}Y_{V}(q_2^{L(0)}u[m]v, q_2) q^{L(0)-c/24} + \Tr_{V^+}  \left(Y_{V}(q_2^{L(0)}v, q_2)u(n) q^{L(0)-c/24} \right)\\
&&= q_2^{r}\sum_{m\geq0} \frac{r^m}{m!}Z_{V^+}(u[m]v, \tau) + q^r \Tr_{V^+} \left(Y_{V}(q_2^{L(0)}v, q_2)q^{L(0)-c/24}u(n) \right)
\end{eqnarray*}
where we used (\ref{conjform}) to obtain the last term.\ Thanks to Lemma \ref{lemspectrace}(b)   we find that
\begin{eqnarray*}
&&\ \ \ \ \ \ \ \ \ \ \ \ \  \Tr_{V^+}  \left(u(n)Y_V(q_2^{L(0)}v, q_2) \right)q^{L(0)-c/24} \\
&&= q_2^{r}\sum_{m\geq0} \frac{r^m}{m!}Z_{V^+}(u[m]v, \tau) - q^r \Tr_{V^+} \left(u(n)Y_{V}(q_2^{L(0)}v, q_2)q^{L(0)-c/24} \right)\\
&&\ \ \ +q^r \Tr_{V} \left(Y_{V}(q_2^{L(0)}v, q_2)q^{L(0)-c/24}u(n) \right),
\end{eqnarray*}
and therefore
\begin{eqnarray*}
&&\ \ \ \ \ \ \ \ \ \ \ \ \  \Tr_{V^+}  \left(u(n)Y_V(q_2^{L(0)}v, q_2) \right)q^{L(0)-c/24} \\
&&= \frac{q_2^r}{1+q^r}\sum_{m\geq0} \frac{r^m}{m!}Z_{V^+}(u[m]v, \tau) +\frac{q^r}{1+q^r} \Tr_{V} \left(Y_{V}(q_2^{L(0)}v, q_2)q^{L(0)-c/24}u(n) \right).
\end{eqnarray*}

Now we use  parallel formulas derived in the course of the untwisted Zhu recursion, specifically (\ref{formular}), to see that
\begin{eqnarray*}
&&\ \ \ \ \ \ \ \ \ \ \ \ \  \Tr_{V^+}  \left(u(n)Y_V(q_2^{L(0)}v, q_2) \right)q^{L(0)-c/24} \\
&&= \frac{q_2^r}{1+q^r}\sum_{m\geq0} \frac{r^m}{m!}Z_{V^+}(u[m]v, \tau) \\
&& +\frac{1}{1+q^r}\left\{  \Tr_{V} \left(u(n)Y_{V}(q_2^{L(0)}v, q_2)q^{L(0)-c/24} \right)- q_2^r\sum_{m\geq0} \frac{r^m}{m!}Z_V(u[m]v, \tau)\right\}.
\end{eqnarray*}
Thus we obtain
\begin{eqnarray*}
&&\ \ \ \ \Tr_{V^+}  \left(u(n)Y_{V}(q_2^{L(0)}v, q_2) \right)q^{L(0)-c/24} \\
&&= \frac{q_2^r}{1+q^r}\sum_{m\geq0} \frac{r^m}{m!}\left\{  Z_{V^+}(u[m]v, \tau)-Z_V(u[m]v, \tau)\right\} \\
&&\ \ \  +\frac{1}{1+q^r}\left\{  \Tr_{V} \left(u(n)Y_{V}(q_2^{L(0)}v, q_2)q^{L(0)-c/24} \right)\right\}.
\end{eqnarray*}

The case $r=0$ tells us that
\begin{eqnarray}\label{formr=0}
&&\ \ \ \ \ \ \ \ \ \ \ \ \  \Tr_{V^+} o(u)o(v)q^{L(0)-c/24} \\
&&= \frac{1}{2}\left\{  Z_{V^+}(u[0]v, \tau)+ \Tr_{V} o(u)o(v)q^{L(0)-c/24} \right\}, \notag
\end{eqnarray}
where we also used  (\ref{[0]mode}).\ On the other hand if $r\neq0$ then the untwisted case shows that
\begin{eqnarray*}
&&\ \ \ \ \ \ \ \ \ \ \ \ \  \Tr_{V^+}  \left(u(n)Y_V(q_2^{L(0)}v, q_2) \right)q^{L(0)-c/24} \\
&&= \frac{q_2^r(1-q^r)}{1-q^{2r}}\sum_{m\geq0} \frac{r^m}{m!}\left\{  Z_{V^+}(u[m]v, \tau)-Z_V(u[m]v, \tau)\right\} \\
&&\ \ \  +\left\{ \frac{q_2^r}{1-q^{2r}}\sum_{m\geq0}\frac{r^m}{m!}Z_V(u[m]v, \tau) \right\}\\
&&=\frac{q_2^r}{1+q^{r}}  \left\{ \sum_{m\geq0} \frac{r^m}{m!}   Z_{V^+}(u[m]v, \tau) \right\}  + \frac{q_2^rq^r}{1-q^{2r}}  \sum_{m\geq 1}\frac{r^m}{m!}Z_V(u[m]v, \tau),
\end{eqnarray*}
where we used \eqref{[0]mode} once more to eliminate the $m=0$ term in the final sum.

Now we return to our $2$-point function to find that 
\begin{eqnarray*}
&&F_{V^+}((u, z_1), (v, z_2), \tau)= \sum_{n\in\ZZ} q_1^{k-n-1} \Tr_{V^+} \left\{u(n)Y_{V}(q_2^{L(0)}v, q_2)\right\}q^{L(0)-c/24} \\
&&=  \sum_{0\neq r\in\ZZ} \frac{q_{21}^r}{1+q^{r}}  \left\{ \sum_{m\geq0} \frac{r^m}{m!}   Z_{V^+}(u[m]v, \tau) \right\}  + \sum_{0\neq r\in\ZZ} \frac{q_{21}^rq^r}{1-q^{2r}} \sum_{m\geq 1} \frac{r^m}{m!} Z_V(u[m]v, \tau) \\
&&\ \ \ +\Tr_{V^+} o(u)o(v)q^{L(0) - c/24}\\
&&=\sum_{m\geq0} \frac{1}{m!}Q_1^{(m)}(z_{21}, \tau)Z_{V^+}(u[m]v, \tau) -\frac{1}{2}Z_{V^+}(u[0]v, \tau)  +\Tr_{V^+} o(u)o(v)q^{L(0) - c/24}\\
&&+\frac{1}{2}\sum_{0\neq r\in\ZZ} q_{21}^r\left( \frac{1}{1-q^{r}} -\frac{1}{1+q^r} \right) \sum_{m\geq 1} \frac{r^m}{m!} Z_V(u[m]v, \tau).
\end{eqnarray*}

Now use (\ref{formr=0}) to obtain
\begin{eqnarray*}
&&F_{V^+}((u, z_1), (v, z_2), \tau)= \\
&&\sum_{m\geq0} \frac{1}{m!}Q_1^{(m)}(z_{21}, \tau)Z_{V^+}(u[m]v, \tau) +\frac{1}{2}\left\{ \Tr_V o(u)o(v)q^{L(0)-c/24}  \right\}\\
&&+\frac{1}{2}\sum_{m\geq 1} \sum_{0\neq r\in\ZZ} q_{21}^r\left( \frac{1}{1-q^{r}} -\frac{1}{1+q^r} \right)  \frac{r^m}{m!} Z_V(u[m]v, \tau) \\
&&=\sum_{m\geq0} \frac{1}{m!}Q_1^{(m)}(z_{21}, \tau)Z_{V^+}(u[m]v, \tau) +\frac{1}{2}\left\{ \Tr_V o(u)o(v)q^{L(0)-c/24}\right\}\\
&&+\frac{1}{2}\sum_{m\geq 1}\frac{1}{m!} \left( P_1^{(m)}(z_{21}, \tau) -Q_1^{(m)}(z_{21}, \tau)\right) Z_V(u[m]v, \tau) \\
&&=\sum_{m\geq0} (-1)^{m+1}\frac{1}{m!}Q_1^{(m)}(z_{12}, \tau)Z_{V^+}(u[m]v, \tau) +\frac{1}{2}\left\{ \Tr_V o(u)o(v)q^{L(0)-c/24} \right\}\\
&&+\frac{1}{2}\sum_{m\geq 1}(-1)^{m+1}\frac{1}{m!} \left( P_1^{(m)}(z_{12}, \tau) -Q_1^{(m)}(z_{12}, \tau)\right) Z_V(u[m]v, \tau).
\end{eqnarray*}
In the above we also used Lemma \ref{lemPQ1}(b).\ This completes the proof of the Theorem.
\end{proof}

\section{$\ZZ_2$-twisted Zhu recursion}
\begin{thm}\label{thmZ2Z}($\ZZ_2$-twisted Zhu recursion.) In the setting of (\ref{setup}) with $u, v\in V^{-}$ and $n\geq1$,  we have
\begin{eqnarray*}
&& Z_{V^+}(u[-n]v, \tau)= \tfrac{1}{2}\delta_{n, 1} \Tr_V o(u)o(v)q^{L(0)-c/24}\\
&&+\sum_{m\geq0} \frac{1}{m}\widehat{F}_{n+m}(\tau))Z_{V^+}(u[m]v, \tau) +\frac 12\sum_{m\geq 1} \frac{1}{m}\left(\widehat{E}_{n+m}(\tau)-\widehat{F}_{n+m}(\tau) \right)  Z_V(u[m]v, \tau).
\end{eqnarray*}
\end{thm}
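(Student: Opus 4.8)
The plan is to mirror the proof of the untwisted recursion (Theorem \ref{thm2}) by producing two different expressions for the modified $2$-point function $F_{V^+}((u,z_1),(v,z_2),\tau)$ and comparing Laurent coefficients in $z_{12}$. One expression is the $\ZZ_2$-twisted reduction formula of Theorem \ref{thmZ22}, which I take as given. The other is the twisted analogue of Theorem \ref{thm3}, namely the identity $F_{V^+}((u,z_1),(v,z_2),\tau)=Z_{V^+}(Y[u,z_{12}]v,\tau)$, which I establish first.

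To prove this analogue of Theorem \ref{thm3} I rerun the associativity argument verbatim, now tracing over $V^+$ rather than $V$. The associator formula (\ref{eqassociator}), the cancellation of the factors $(z_1+z_2)^k$ under the trace, and the conjugation formula (\ref{conjform}) are all statements about operators on the full space $V$ and so are unaffected. The only point that genuinely requires the twisted setup (\ref{setup}) is that the trace over $V^+$ be well defined: since $u,v\in V^-$, each of $Y_V(q_1^{L(0)}u,q_1)$ and $Y_V(q_2^{L(0)}v,q_2)$ interchanges $V^+$ and $V^-$, so their product preserves $V^+$; likewise $Y[u,z_{12}]v$ has coefficients in $V^+$, so $Y_V(q_2^{L(0)}Y[u,z_{12}]v,q_2)$ preserves $V^+$ and $Z_{V^+}(u[j]v,\tau)$ makes sense for every $j$. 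Thus the proof of Theorem \ref{thm3} goes through with $V$ replaced by $V^+$ in the trace.

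Combining the two expressions gives $Z_{V^+}(Y[u,z_{12}]v,\tau)$ equal to the right-hand side of Theorem \ref{thmZ22}. On the left I expand $Z_{V^+}(Y[u,z_{12}]v,\tau)=\sum_{j\in\ZZ}Z_{V^+}(u[j]v,\tau)\,z_{12}^{-j-1}$, so that the coefficient of $z_{12}^{n-1}$ is exactly $Z_{V^+}(u[-n]v,\tau)$. On the right I substitute the Laurent expansions (\ref{eqP1m}) and (\ref{Q1def}) for $P_1^{(m)}$ and $Q_1^{(m)}$ and extract the same coefficient. The constant term $\tfrac12\Tr_V o(u)o(v)q^{L(0)-c/24}$ only contributes at $n=1$, producing the $\tfrac12\delta_{n,1}$ prefactor. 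In the $Q_1^{(m)}$ sum the singular part $(-1)^{m+1}z^{-m-1}$ has exponent $\leq -1$ and so never reaches $z^{n-1}$ for $n\geq1$; the regular part contributes $m!\binom{n+m-1}{m}F_{n+m}(\tau)$ to the coefficient of $z^{n-1}$ (nonzero only when $n+m$ is even). After dividing by $m!$ and invoking the definition (\ref{eqhatF}), this yields $\sum_{m\geq0}\tfrac1m\widehat F_{n+m}(\tau)Z_{V^+}(u[m]v,\tau)$. In the difference $P_1^{(m)}-Q_1^{(m)}$ the two singular parts cancel outright, and an identical extraction together with (\ref{eqhatE}) and (\ref{eqhatF}) produces the final sum $\tfrac12\sum_{m\geq1}\tfrac1m(\widehat E_{n+m}-\widehat F_{n+m})Z_V(u[m]v,\tau)$.

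Granting Theorem \ref{thmZ22}, the computation is essentially mechanical, and the main points requiring care are bookkeeping rather than conceptual. The most delicate is the $m=0$ term of the $Q_1$-sum, which is present here, unlike in the untwisted reduction (Theorem \ref{thm1}), whose sum starts at $m\geq1$: it contributes $-F_n(\tau)Z_{V^+}(u[0]v,\tau)$, and this is precisely the $m=0$ value of $\tfrac1m\widehat F_{n+m}$ once the factor $m$ in the definition (\ref{eqhatF}) is cancelled against the $\tfrac1m$, so that the symbol $\tfrac1m\widehat F_{n+m}$ is read as $(-1)^{m+1}\binom{n+m-1}{m}F_{n+m}(\tau)$ and is regular at $m=0$. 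Note that, in contrast to (\ref{[0]mode}), there is no reason for $Z_{V^+}(u[0]v,\tau)$ to vanish, so this term must be retained; indeed (\ref{formr=0}) records exactly what $Z_{V^+}(u[0]v,\tau)$ equals. A secondary point is that the Laurent expansions are valid only in a suitable annulus, so the coefficient comparison should be justified by analytic continuation exactly as in the untwisted case.
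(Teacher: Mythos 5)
Your proposal is correct and takes essentially the same route as the paper: the paper likewise combines the twisted reduction (Theorem \ref{thmZ22}) with the twisted analogue of Theorem \ref{thm3} (asserted there to hold ``with the same proof''), and then extracts the coefficient of $z_{12}^{n-1}$. Your explicit treatment of the $m=0$ term, read as $(-1)^{m+1}\binom{n+m-1}{m}F_{n+m}(\tau)$ so that it need not vanish, matches the paper's Remark \ref{remm=0} and the identity (\ref{formr=0}).
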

\begin{rmk}\label{remm=0} The term corresponding to $m=0$ in the first sum is equal to
\begin{eqnarray*}
 -F_{n}(\tau)Z_{V^+}(u[0]v, \tau).
\end{eqnarray*}
\end{rmk}

We shall use the next result later.\ It is a useful special case of Theorem \ref{thmZ2Z}.\ As for its proof, one only has to check the vanishing of the relevant terms in the recursive formula of Theorem \ref{thmZ2Z}, and this follows easily from the hypotheses together with (\ref{v[m]def}).

\begin{cor}\label{thmZ21} Let the notation and assumptions be as in Theorem \ref{thmZ2Z} and assume in addition that
$\Tr_V u(r)v(s)q^{L(0)-c/24}=0$ for all $r, s\in\ZZ$.\ Then 
\begin{eqnarray*}
&& Z_{V^+}(u[-n]v, \tau)=\sum_{m\geq0} \frac{1}{m}\widehat{F}_{n+m}(\tau))Z_{V^+}(u[m]v, \tau). 
\end{eqnarray*}
$\hfill\Box$
\end{cor}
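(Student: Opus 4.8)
The plan is to derive Corollary~\ref{thmZ21} directly from Theorem~\ref{thmZ2Z} by showing that the hypothesis $\Tr_V u(r)v(s)q^{L(0)-c/24}=0$ for all $r,s\in\ZZ$ forces two collections of terms in the twisted recursion to vanish, leaving exactly the single remaining sum. First I would observe that the conclusion is obtained from the statement of Theorem~\ref{thmZ2Z} by simply dropping (i) the boundary term $\tfrac{1}{2}\delta_{n,1}\Tr_V o(u)o(v)q^{L(0)-c/24}$, and (ii) the entire final sum $\tfrac12\sum_{m\geq1}\tfrac{1}{m}\bigl(\widehat{E}_{n+m}(\tau)-\widehat{F}_{n+m}(\tau)\bigr)Z_V(u[m]v,\tau)$. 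Thus the whole task reduces to proving that each quantity of the shape $Z_V(u[m]v,\tau)$ (for $m\geq1$), as well as the trace $\Tr_V o(u)o(v)q^{L(0)-c/24}$, vanishes under the hypothesis.

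The key step is to unwind the definition of $Z_V(u[m]v,\tau)=\Tr_V o(u[m]v)q^{L(0)-c/24}$ and reduce it to traces of products of round-bracket modes. Using the mode conversion formula \eqref{v[m]def}, the square-bracket mode $u[m]$ is a finite $\CC$-linear combination of round-bracket modes $u(i)$ with $i\geq m$. Correspondingly $o(u[m]v)$, being a zero mode of $u[m]v$, expands via the commutator/associator relations \eqref{eqcommutator}--\eqref{eqassociator} into a finite linear combination of products $u(r)v(s)$ of round-bracket modes of $u$ and $v$ (with the indices constrained so that the total mode shifts the $L(0)$-grading by zero). Hence $\Tr_V o(u[m]v)q^{L(0)-c/24}$ is a finite linear combination of the traces $\Tr_V u(r)v(s)q^{L(0)-c/24}$, all of which vanish by hypothesis. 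The same reasoning handles the boundary term: $o(u)o(v)$ is literally a product $u(\deg u-1)v(\deg v-1)$ of two round-bracket modes, so $\Tr_V o(u)o(v)q^{L(0)-c/24}$ is one of the vanishing traces outright.

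Once these vanishings are in hand, substituting into Theorem~\ref{thmZ2Z} kills the $\delta_{n,1}$ term and the last sum, and in the first sum every summand survives since $Z_{V^+}(u[m]v,\tau)$ is a trace over $V^+$ rather than over $V$ and is not governed by the hypothesis; this yields exactly $Z_{V^+}(u[-n]v,\tau)=\sum_{m\geq0}\tfrac{1}{m}\widehat{F}_{n+m}(\tau)Z_{V^+}(u[m]v,\tau)$, as claimed. The main (and only genuine) obstacle is the bookkeeping in the second paragraph: one must check carefully, using \eqref{v[m]def} together with the commutator and associator formulas, that $o(u[m]v)$ really does expand as a \emph{finite} combination of products $u(r)v(s)$ with no stray single-mode or higher-valence contributions. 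Because $u,v\in V^-$ are homogeneous in the square-bracket grading and $u[m]v$ is therefore homogeneous, the zero mode $o(u[m]v)$ is unambiguous, and the truncation condition guarantees finiteness of all the sums involved; the argument the authors indicate (\textquotedblleft check the vanishing of the relevant terms\textquotedblright\ via \eqref{v[m]def}) is exactly this reduction.
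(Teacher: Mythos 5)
Your proof is correct and follows essentially the same route as the paper's own (one-line) proof: drop the two extra terms of Theorem~\ref{thmZ2Z}, observing that $\Tr_V o(u)o(v)q^{L(0)-c/24}$ is itself one of the traces assumed to vanish, and that $Z_V(u[m]v,\tau)$ reduces, via (\ref{v[m]def}) and the associator formula \eqref{eqassociator}, to a combination of such traces. The one detail worth making explicit is that the associator expansion of $(u(i)v)(n)$ also yields reverse-order products $v(s)u(r)$; their traces vanish as well, since on each finite-dimensional graded piece $\Tr v(s)u(r)$ equals $\Tr u(r)v(s)$ taken over the complementary piece (cf.\ Lemma~\ref{lemspectrace}(a)), so the hypothesis covers them too.
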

\noindent
\textit{Proof of Theorem \ref{thmZ2Z}}.\ The first equality below continues to not only make sense for our modified $2$-point function, but it holds with the same proof (cf.\ \cite{MT2}), so we obtain
from Theorem \ref{thmZ22} that
\begin{eqnarray*}
&& Z_{V^+}(Y[u, z_{12}]v, \tau) = F_{V^+}((u, z_1), (v, z_2), \tau)\\
&&=\sum_{m\geq0} (-1)^{m+1}\frac{1}{m!}Q_1^{(m)}(z_{12}, \tau)Z_{V^+}(u[m]v, \tau) +\frac{1}{2} \Tr_V o(u)o(v)q^{L(0)-c/24}\\
&&\ \ \ \ \ +\sum_{m\geq1}(-1)^{m+1}\frac{1}{m!} \left( P_1^{(m)}(z_{12}, \tau) -Q_1^{(m)}(z_{12}, \tau)\right) Z_V(u[m]v, \tau).
\end{eqnarray*}

We identify powers of $z_{12}^{n-1}$.\ For $n\geq2$ we obtain
\begin{eqnarray*}
&&Z_{V^+}(u[-n]v, \tau)= \sum_{m\geq0} (-1)^{m+1}{n+m-1\choose m}F_{n+m}(\tau)Z_{V^+}(u[m]v, \tau) \\
&&+\sum_{m\geq 1}(-1)^{m+1} {n+m-1\choose m} \left( E_{n+m}(\tau) -F_{n+m}(\tau)\right) Z_V(u[m]v, \tau)
\end{eqnarray*}

On the other hand if $n=1$, noting that $P_1(z, \tau)$ has constant term zero in its Laurent expansion, we obtain
\begin{eqnarray*}
&&\tfrac{1}{2} \Tr_V o(u)o(v)q^{L(0)-c/24}+\sum_{m\geq1} (-1)^{m+1}\frac{1}{m!}F_{1+m}(\tau)Z_{V^+}(u[m]v, \tau)\\
&&+\sum_{m\geq1}(-1)^{m+1}   \left( E_{1+m}(\tau) - F_{1+m}(\tau) \right) Z_V(u[m] v,  \tau).
\end{eqnarray*}
Theorem \ref{thmZ2Z} is now proved. $\hfill\Box$
 
\section{Calculation of some $1$-point functions}\label{sec1point}
In this Section we solve Problem (c) of the Introduction for the VOAs $M^+$ and $V_L^+$ by combining both the $\ZZ_2$-twisted and untwisted Zhu recursion formulas.
\subsection{Symmetrized Heisenberg VOA}\label{sec1pointM}
As a first application of Theorem~\ref{thmZ2Z} we derive a formula for the 1-point functions for a basis of the symmetrized Heisenberg algebra $M^+$ (see \eqref{evenh}). 

\begin{thm}\label{thmM+} For positive integers $n_1, ..., n_p$ ($p$ even) we have
\begin{eqnarray*}
&&\ \ \ \ \ \ \ \ \ \ \ \  Z_{M^+}(h_{i_1}[-n_1]...h_{i_p}[-n_p]\mathbf{1}, \tau)=\\
&& \left(\sum_{\sigma} \prod_{(rs)} \delta_{i_r,i_s}\widehat{F}_{n_r+n_s}(\tau)\right)Z_{M^+}(\tau)+\\
&&\frac 12 \left(\sum_{\sigma} \left( \prod_{(r s)}\delta_{i_r,i_s}\widehat{E}_{n_r+n_s}(\tau) - \prod_{(r s)}\delta_{i_r,i_s}\widehat{F}_{n_r+n_s}(\tau)\right)\right)Z_M(\tau).
\end{eqnarray*}
where $\sigma$ ranges over the fixed-point free involutions of the index set $\{1,....,p\}$ and the products range over transpositions $(r s)$  whose product is $\sigma$. \end{thm}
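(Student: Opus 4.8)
The plan is to prove Theorem~\ref{thmM+} by induction on $p$, using the $\ZZ_2$-twisted Zhu recursion (Theorem~\ref{thmZ2Z}) to strip off the leading mode $h_{i_1}[-n_1]$. First I would set $u := h_{i_1}[-n_1]$ and $v := h_{i_2}[-n_2]\cdots h_{i_p}[-n_p]\mathbf{1}$. Since $p$ is even, $v$ lies in $M^-$ (an odd number of modes) and $u = h_{i_1}[-n_1]\mathbf 1$ lies in $M^-$ as well, so the hypotheses $u,v\in V^-$ of Theorem~\ref{thmZ2Z} are met. To apply the recursion I must compute $u[m]v = h_{i_1}[m]\,h_{i_2}[-n_2]\cdots h_{i_p}[-n_p]\mathbf 1$ for $m\geq 0$. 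Using the fact recalled at the end of Section~\ref{cylinder}---that the canonical commutator relations \eqref{commrelns} also hold for the square-bracket modes $h_i[n]$---the mode $h_{i_1}[m]$ ($m\geq 1$) commutes past each $h_{i_j}[-n_j]$ picking up a term $\delta_{i_1,i_j}\,\delta_{m,n_j}\,m$ and then annihilates the vacuum. Thus $h_{i_1}[m]v = \sum_{j=2}^p \delta_{i_1,i_j}\delta_{m,n_j}\,m\, v_{\hat j}$, where $v_{\hat j}$ denotes $v$ with the factor $h_{i_j}[-n_j]$ deleted, and $h_{i_1}[0]v = 0$ by \eqref{eqhnealpha} (the zero mode acts trivially on $M$).

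Next I would feed this into Theorem~\ref{thmZ2Z}. The delta $\delta_{m,n_j}$ collapses the sum over $m$, so the recursion becomes
\begin{eqnarray*}
Z_{M^+}(u[-n_1]v,\tau) &=& \tfrac12\delta_{n_1,1}\Tr_M o(u)o(v)q^{L(0)-c/24}\\
&&+\sum_{j=2}^p \delta_{i_1,i_j}\widehat F_{n_1+n_j}(\tau)\,Z_{M^+}(v_{\hat j},\tau)\\
&&+\tfrac12\sum_{j=2}^p \delta_{i_1,i_j}\bigl(\widehat E_{n_1+n_j}(\tau)-\widehat F_{n_1+n_j}(\tau)\bigr)Z_M(v_{\hat j},\tau),
\end{eqnarray*}
where the factor $\tfrac1m\cdot m = 1$ from $\widehat F_{n+m}$ and the coefficient $m$ in $h_{i_1}[m]v$ cancel neatly (this is exactly why the hatted Eisenstein series in \eqref{eqhatE}--\eqref{eqhatF} are defined with that normalization). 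The leading term $\Tr_M o(u)o(v)$ vanishes when $p>2$ because $o(u)=o(h_{i_1}[-1]\mathbf 1)$ shifts weight and $v$ has too many modes for the trace to be nonzero; more carefully, each zero mode $o(h[-n])$ preserves weight only through cancellation, and a parity/weight count shows this trace contributes only in the base case, aligning with the $\delta_{n_1,1}$. Now $v_{\hat j}$ is a product of $p-2$ square-bracket modes, so both $Z_{M^+}(v_{\hat j},\tau)$ and $Z_M(v_{\hat j},\tau)$ are governed by the inductive hypothesis (the latter via the untwisted Theorem~\ref{thmMT}, whose shape matches the second line of the claimed formula).

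The combinatorial heart is then to reassemble these two sums into the claimed sum over fixed-point-free involutions $\sigma$ of $\{1,\dots,p\}$. The idea is standard: every such $\sigma$ is determined by first choosing the partner $j$ of the index $1$ (giving the transposition $(1\,j)$, which contributes $\delta_{i_1,i_j}\widehat F_{n_1+n_j}$ or $\widehat E_{n_1+n_j}$) and then a fixed-point-free involution of the remaining $p-2$ indices. This matches the recursion term-by-term, provided one tracks carefully which branch ($M^+$ versus $M$) each transposition feeds into: the $\widehat F$-branch stays inside $M^+$ and so every transposition of $\sigma$ carries a $\widehat F$, whereas in the $M$-branch the recursion immediately hands off to the untwisted formula, so the partner transposition $(1\,j)$ carries $\widehat E$ (or $\widehat E-\widehat F$) while the rest carry $\widehat E$ from Theorem~\ref{thmMT}. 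I expect the main obstacle to be precisely this bookkeeping---verifying that the telescoping of $\widehat E$ and $\widehat F$ factors across the two summands of Theorem~\ref{thmZ2Z} reorganizes into exactly the two products $\prod_{(rs)}\widehat F_{n_r+n_s}$ and $\prod_{(rs)}\widehat E_{n_r+n_s}$ in the stated difference, rather than producing mixed products of $\widehat E$'s and $\widehat F$'s. Establishing that the mixed terms cancel (so that each involution contributes a \emph{pure} $\widehat F$-product to the $Z_{M^+}$ coefficient and the difference of pure products to the $Z_M$ coefficient) is the delicate inductive step, and I would isolate it as the key lemma, checking the base case $p=2$ explicitly against Remark~\ref{remm=0} and the $n_1=1$ leading term.
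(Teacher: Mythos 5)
Your strategy coincides with the paper's own proof: induction on $p$, stripping off $h_{i_1}[-n_1]$ via the $\ZZ_2$-twisted recursion (Theorem~\ref{thmZ2Z}), collapsing $h_{i_1}[m]v$ with the square-bracket form of the commutator relations \eqref{commrelns}, invoking the inductive hypothesis for the $Z_{M^+}$-branch and Theorem~\ref{thmMT} (in its arbitrary-rank form from \cite{MT2}) for the $Z_M$-branch, and then recombining over fixed-point-free involutions by choosing the partner of the index $1$. The $\widehat E$/$\widehat F$ bookkeeping you isolate as the ``key lemma'' is exactly the computation the paper carries out, and it does close: the mixed terms $\delta_{i_1,i_j}\widehat F_{n_1+n_j}\prod\widehat E_{n_r+n_s}$ arising from the two $Z_M$-contributions cancel, leaving only the pure $\widehat E$-product and pure $\widehat F$-product, so your anticipated obstacle is real but surmountable precisely as you describe.

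The one step whose justification would fail as written is your treatment of the boundary term $\tfrac12\delta_{n_1,1}\Tr_M o(u)o(v)q^{L(0)-c/24}$. You argue it vanishes for $p>2$ by a weight/parity count and that it ``contributes only in the base case, aligning with the $\delta_{n_1,1}$.'' In fact it never contributes, for a simpler and stronger reason: $o(u)=o(h_{i_1})=h_{i_1}(0)$, and by \eqref{commrelns} the zero mode $h_{i_1}(0)$ commutes with every mode $h_j(n)$ and annihilates the vacuum, hence is identically zero on all of $M$. So the term vanishes for every $p$ and every $n_1$, including $p=2$, $n_1=1$; had you carried your stated reasoning into the base case, you would be hunting for a nonzero contribution that does not exist, and the claimed formula (which contains no such term) would appear to be contradicted. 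A related misattribution: you justify $h_{i_1}[0]v=0$ ``by \eqref{eqhnealpha},'' but that equation concerns the lattice states $e^{\alpha}$; the correct reason is again \eqref{commrelns} in square-bracket form, since $h_{i_1}[0]$ commutes with all $h_{i_j}[-n_j]$ and kills $\mathbf 1$. With these two repairs your plan goes through and reproduces the paper's argument.
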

\begin{proof}
As it plays no further r\^{o}le in the proof, we omit the argument $\tau$ from the notation for the sake of convenience.

\medskip
For $p=0$, using the convention that the identity is a fixed-point free involution on the empty set and an empty product equals $1$, the claimed formula in the Theorem yields
$$Z_{M^+}=Z_{M^+}+\frac 12 (1-1)Z_M,$$
which is clearly true.

\medskip
Supposing the claim is true for some even $p\geq 2$, we now show it for $p+2$.  So let $u=h_{i_1}[-n_1]\dots h_{i_{p+2}}[-n_{p+2}]\mathbf 1$ and write $v=h_{i_2}[-n_2]\dots h_{i_{p+2}}[-n_{p+2}]\mathbf 1$. Then Theorem~\ref{thmZ22} yields
\begin{align*}
Z_{M^+}(u)=&\frac{1}{2}\delta_{n_1,1}\Tr_M o(h_{i_1})o(v) q^{L(0)-c/24}\\
&+\sum_{m\geq 0} \frac 1m\widehat F_{n_1+m}Z_{M^+}(h_{i_1}[m] v)\\
&+\frac 12 \sum_{m\geq 1} \frac 1m\left(\widehat E_{n_1+m}-\widehat F_{n_1+m}\right)Z_{M}(h_{i_1}[m] v),
\end{align*}
which, using \eqref{commrelns}, we may again simplify to
\begin{gather}\label{eqMplusinduction}
\sum_{j=2}^{p+2} \delta_{i_1,i_j}\widehat F_{n_1+n_j}Z_{M^+}(v\setminus j)
+\frac 12 \sum_{j=2}^{p+2} \delta_{i_1,i_j}\left(\widehat E_{n_1+n_j}-\widehat F_{n_1+n_i}\right)Z_{M}(v\setminus j),
\end{gather}
where we write $v\setminus j$ for the state obtained from $v$ by deleting the operator $h_{i_j}[-n_j]$. By the induction hypothesis we have that 
$$Z_{M^+}(v\setminus j)=\left(\sum_{\sigma_j} \prod_{(rs)}  \delta_{i_r,i_s}\widehat{F}_{n_r+n_s}\right)Z_{M^+}
+\frac 12 \left(\sum_{\sigma_j} \left( \prod_{(rs)}\delta_{i_r,i_s}\widehat{E}_{n_r+n_s}- \prod_{(rs)}\delta_{i_r,i_s}\widehat{F}_{n_r+n_s}\right)\right)Z_M,$$
where $\sigma_j$ ranges over the fixed-point free involutions of the index set $\{2,...,p+2\}\}\setminus\{j\}$.\ By Theorem~\ref{thmMT}, or rather its more general version for arbitrary ranks (see \cite[Theorem 3.14]{MT2}) on the other hand, we find that 
$$Z_{M}(v\setminus j)=\left(\sum_{\sigma_j} \prod_{(rs)} \delta_{i_r,i_s}\widehat{E}_{n_r+n_s}\right)Z_{M}.$$
Plugging this into \eqref{eqMplusinduction} yields the following expression for $Z_{M^+}(u)$,
\begin{align*}
 &\sum_{j=2}^{p+2}  \left(\sum_{\sigma_j} \delta_{i_1,n_j}\widehat F_{n_1+n_i}\prod_{(rs)}\widehat \delta_{i_r,i_s}F_{n_r+n_s}\right)Z_{M^+}\\
&+\frac{1}{2}\sum_{j=2}^{p+2}  \left(\sum_{\sigma_j} \delta_{i_1,i_j}\widehat F_{n_1+n_i}\left(\prod_{(rs)}\delta_{i_r,i_s}\widehat E_{n_r+n_s}-\prod_{(rs)}\delta_{i_r,i_s}\widehat F_{n_r+n_s}\right)\right)Z_{M}\\
&+\frac{1}{2}\sum_{j=2}^{p+2} \left(\sum_{\sigma_j} \delta_{i_1,i_j}(\widehat E_{n_1+n_j}-\widehat F_{n_1+n_j})\prod_{(rs)}\delta_{i_r,i_s}\widehat E_{n_r+n_s}\right)Z_{M}
\end{align*}
We can clearly simplify the first line to 
$$\left(\sum_{\sigma} \prod_{(rs)} \delta_{i_r,i_s}\widehat F_{n_r+n_s}\right)Z_{M^+},$$
where $\sigma$ now ranges over all fixed-point free involutions of the full index set $\{1,...,{p+2}\}$.  The second and third line together simplify to
$$\frac{1}{2}\sum_{j=2}^{p+2} \left(\sum_{\sigma_j}  \delta_{i_1,i_j}\widehat E_{n_1+n_j}\prod_{(rs)}\delta_{i_r,i_s}\widehat E_{n_r+n_s}-\delta_{i_1,i_j}\widehat F_{n_1+n_i}\prod_{(rs)}\delta_{i_r,i_s}\widehat F_{n_r+n_s}\right)Z_M,$$
yielding
$$\frac 12\left(\sum_{\sigma}\left(\prod_{(rs)} \delta_{i_r,i_s}\widehat E_{n_r+n_s}- \prod_{(rs)}\delta_{i_r,i_s} \widehat F_{n_r+n_s}\right)\right)Z_{M}$$
with the range of $\sigma$ as before. This completes the proof of the Theorem.
\end{proof}

\subsection{Symmetrized lattice VOAs}\label{secsymlat} In this Subsection we compute all trace functions in a symmetrized lattice theory $V_L^+$.\ 

\medskip
Let $M\subseteq V_L$ be the usual Heisenberg subVOA.\ $V_L^+$ is spanned by the states described in (\ref{evenh}) and (\ref{genstates}).\ Concerning the latter types of states,
consider a state of the form $h_{i_1}[-n_1]...h_{i_p}[-n_p]f_{\alpha}$\ ($p\geq 2$ even) and take $u=h, v=h_{i_2}[-n_2]...h_{i_p}[-n_p]f_{\alpha}$.\ Setting $M_{\alpha}=M\otimes e^{\alpha}$ for $\alpha\in L$,  these are  $M$-modules with fusion rules are $M_{\alpha}\boxtimes M_{\beta}=M_{\alpha+\beta}$.\ It follows immediately that the hypotheses of Corollary \ref{thmZ21} are satisfied with
$V=V_L$.\ Thus we obtain the recursion
\begin{eqnarray}\label{lattrecur}
&&\ \ \ \ \ \ \ \ \ \  Z_{V_L^+}(h_{i_1}[-n_1]...h_{i_p}[-n_p]f_{\alpha}, \tau)\\
&&=\sum_{m\geq0} \frac{1}{m}\widehat{F}_{n+m}(\tau))Z_{V_L^+}(h_{i_1}[m]h_{i_2}[-n_2]...h_{i_p}[-n_p]f_{\alpha}, \tau).\notag
\end{eqnarray}

Next we will explain the solution of this recursive formula.\
For each positive integer $n$ we introduce the modular expression 
\begin{eqnarray*}
\widetilde{F}_{n}(\tau):= -(h_{n}, \alpha)F_{n}(\tau).
\end{eqnarray*}
All of this applies with only cosmetic changes if we replace $f_{\alpha}$ with $g_{\alpha}$, and we carry over the preceding notations in this case.\
Finally we can state

\begin{thm}\label{thmlattrecur}
Let  $0\neq\alpha\in L$ and $u:=h_{i_1}[-n_1] ...h_{i_p}[-n_p]e_{\alpha}$,  where $e_{\alpha}=f_{\alpha}$ or $g_{\alpha}$ according as
$p$ is even or odd respectively (cf. (\ref{fgdef})).\  Then
\begin{eqnarray*}
Z_{V_L^+}(u, \tau)= \left\{\sum_{\sigma} \prod_{(rs),(t)} \widetilde{F}_{n_t}(\tau)\widehat{F}_{n_r+n_s}(\tau) \right\} Z_{V_L^+}(f_{\alpha}, \tau)
\end{eqnarray*}
where $\sigma$ ranges over all involutions of the index set $\{1,...,p\}$, and $(rs),(t)$ range over the $2$-cycles and $1$-cycles respectively in $S_p$ whose product is $\sigma$.
\end{thm}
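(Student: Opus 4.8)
The plan is to prove the formula by induction on the number $p$ of Heisenberg modes, using the recursion (\ref{lattrecur}) (together with its $g_\alpha$-counterpart) as the engine and peeling off the leftmost operator $h_{i_1}[-n_1]$ at each step. The only genuine base case is $p=0$, where $u=f_\alpha$ and both sides reduce to $Z_{V_L^+}(f_\alpha,\tau)$ (the unique empty involution contributing an empty product); the case $p=1$ is the degenerate instance of the inductive step below, in which no interior modes are present. Throughout I read the product over transpositions as carrying the Kronecker factor $\delta_{i_r,i_s}$ arising from the canonical commutation relations, exactly as in Theorem~\ref{thmM+}; with this understanding the statement is as written.

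For the inductive step, write $v = h_{i_2}[-n_2]\cdots h_{i_p}[-n_p]e_\alpha$ and apply the recursion of Corollary~\ref{thmZ21} to $u = h_{i_1}[-n_1]v$ (the hypotheses hold by the fusion rules $M_\alpha\boxtimes M_\beta=M_{\alpha+\beta}$). The key input is the action of the single mode $h_{i_1}[m]$ on $v$. For $m\geq 1$ one uses that $h_{i_1}[m]$ annihilates $e^{\pm\alpha}$ — immediate from (\ref{eqhnealpha}) and the expansion (\ref{v[m]def}) of $h[m]$ in the round modes $h(i)$ with $i\geq m\geq 1$ — so that $h_{i_1}[m]$ can only survive by contracting with an interior mode via the square-bracket canonical commutation relations, giving
\begin{equation*}
h_{i_1}[m]v = \sum_{j=2}^p \delta_{i_1,i_j}\delta_{m,n_j}\,m\,(v\setminus j),
\end{equation*}
where $v\setminus j$ deletes $h_{i_j}[-n_j]$. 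Summing the $m\geq 1$ part of the recursion, the factor $\tfrac1m$ cancels the $m$ from the commutator while $\delta_{m,n_j}$ fixes $m=n_j$, leaving $\sum_{j=2}^p \delta_{i_1,i_j}\widehat{F}_{n_1+n_j}Z_{V_L^+}(v\setminus j)$.

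The more delicate input is the $m=0$ term, which by Remark~\ref{remm=0} equals $-F_{n_1}Z_{V_L^+}(h_{i_1}[0]v)$. Here one checks $h_{i_1}[0]=h_{i_1}(0)$ (the constant term in (\ref{binomid}) is $c(1,i,0)=\delta_{i,0}$), which commutes past every interior mode and acts on the exponential by $h(0)f_\alpha = (h,\alpha)g_\alpha$ and $h(0)g_\alpha = (h,\alpha)f_\alpha$. Thus $h_{i_1}[0]v = (h_{i_1},\alpha)v'$, where $v'$ replaces $e_\alpha$ by the opposite type $e_\alpha'$, and the $m=0$ contribution becomes $\widetilde{F}_{n_1}Z_{V_L^+}(v')$ by the very definition $\widetilde{F}_{n_1} = -(h_{i_1},\alpha)F_{n_1}$. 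This is precisely the weight attached to making $1$ a fixed point of the involution, and it is the mechanism by which the two parity cases ($f_\alpha$ with $p$ even, $g_\alpha$ with $p$ odd) feed into one another. Note that $v'$ has $p-1$ modes and type $e_\alpha'$ of the correct parity, while each $v\setminus j$ has $p-2$ modes and type $e_\alpha$ of the correct parity, so the induction hypothesis applies to both.

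To finish, I would substitute the induction hypothesis for $Z_{V_L^+}(v')$ (a sum over involutions of $\{2,\dots,p\}$) and for each $Z_{V_L^+}(v\setminus j)$ (a sum over involutions of $\{2,\dots,p\}\setminus\{j\}$), then observe the bijective bookkeeping: an involution $\sigma$ of $\{1,\dots,p\}$ either fixes $1$ — matching the $m=0$ term with the fixed-point weight $\widetilde{F}_{n_1}$ — or pairs $1$ with a unique $j$ in a transposition $(1j)$ — matching the $j$-th summand of the $m\geq 1$ term with the weight $\delta_{i_1,i_j}\widehat{F}_{n_1+n_j}$. Since these alternatives partition all involutions of $\{1,\dots,p\}$, the two pieces reassemble into the single sum $\sum_\sigma\prod_{(rs),(t)}\widetilde{F}_{n_t}\widehat{F}_{n_r+n_s}$ times $Z_{V_L^+}(f_\alpha,\tau)$, completing the induction. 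The main obstacle is not any single estimate but handling the $m=0$ term correctly: recognizing that $h[0]$ toggles $f_\alpha\leftrightarrow g_\alpha$, which simultaneously produces the $\widetilde F$ factors for $1$-cycles and glues the even and odd recursions together.
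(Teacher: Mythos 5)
Your proposal is correct and follows essentially the same route as the paper: induction on $p$ via Corollary~\ref{thmZ21}, with the $m\geq 1$ terms contracting against interior modes to produce the $\widehat{F}_{n_1+n_j}$ weights for transpositions $(1j)$, and the $m=0$ term (Remark~\ref{remm=0}) toggling $f_\alpha\leftrightarrow g_\alpha$ to produce the $\widetilde{F}_{n_1}$ weight for the fixed point, after which the involutions of $\{1,\dots,p\}$ reassemble exactly as you describe. Your explicit verifications that $h_{i_1}[m]$ annihilates $e^{\pm\alpha}$ for $m\geq 1$, that $h_{i_1}[0]=h_{i_1}(0)$, and that the $\delta_{i_r,i_s}$ factors should be read into the product are details the paper leaves implicit, but they are consistent with its argument.
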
 
\begin{proof} \ We proceed by induction on $p$, the case $p=0$ being trivially true.

\medskip
Use  (\ref{lattrecur}) together with the commutator relations (\ref{commrelns}) to see that (with $p$ even, say)
\begin{eqnarray}\label{newform}
&&\ \ \ \ \ \ \ \ \ \ \ Z_{V_L^+}(h_{i_1}[-n_{1}]...h_{i_p}[-n_{p}]f_{\alpha}, \tau)\\
&&= \sum_{m\geq0} \frac{1}{m}\widehat{F}_{n_1+m}(\tau))Z_{V_L^+}(h_{i_1}[m]h_{i_2}[-n_2]...h_{i_p}[-n_p]f_{\alpha}, \tau) \notag
\end{eqnarray}
where, using Remark \ref{remm=0}, the term with $m=0$ is equal to\ 
\begin{eqnarray*}
 && -(h_{i_1}, \alpha)F_{n_1}(\tau)Z_{V_L^+}(h_{i_2}[-n_2]...h_{i_p}[-n_p]g_{\alpha}, \tau)\\
  &&=  -(h_{i_1}, \alpha)F_{n_1}(\tau)  \left\{\sum_{\sigma'}  \prod_{(rs),(t)} \widetilde{F}_{n_t}(\tau)\widehat{F}_{n_r+n_s}(\tau) \right\} Z_{V^+}(f_{\alpha}, \tau)
\end{eqnarray*}
where $\sigma'$ ranges over involutive permutations of the symmetric group on the index set  $\{2,...,p\}$.\ Thus the term $m=0$
accounts for all of  the  permutations in $S_p$  having square $1$ and $1$ as a fixed point.

\medskip
Similarly, the terms with $m\neq0$ on the right-hand side of (\ref{newform}) yield
\begin{eqnarray*}
&&\sum_{j=2}^p \frac{1}{n_j}\widehat{F}_{n_1+n_j}(\tau) Z_{V^+}(h[n_j]h[-n_2]...h[-n_k]g_{\alpha}, \tau)\\
&&=\sum_{j=2}^p \widehat{F}_{n_1+n_j}(\tau)   \left\{\sum_{\sigma_{1, j}}  \prod_{(rs),(t) } \widetilde{F}_{n_t}(\tau)\widehat{F}_{n_r+n_s}(\tau) \right\} Z_{V^+}(f_{\alpha}, \tau),
\end{eqnarray*}
where $\sigma_{1, j}$ ranges over the involutive permutations of the symmetric group on the  index set $\{1, ..., p\}\setminus\{1, j\}$
Thus the terms with $m\neq 0$ account for all involutive permutations in $S_p$ for which $1$ is not a fixed point. \
This proves the Theorem when $p$ is even.\ If $p$ is odd the proof is similar with only changes of notation and we skip the details.\  The Theorem is now proved.
\end{proof}

\begin{rmk}\label{rmkfalphatrace}
The formula in Theorem~\ref{thmlattrecur} involves the trace of the state $f_\alpha$ on $V_L^+$.  It is not hard to see that this trace vanishes unless $\alpha=2\beta\in 2L$. In this case the only states that can contribute to the trace lie in $(M^+\otimes f_\beta)\oplus (M^-\otimes g_\beta)$. An explicit formula can be found in \cite[Lemma 4.3]{DM} in the special case of $L=\Lambda$ the Leech lattice, but the proof applies for general even lattices $L$ of rank $k$:
\begin{gather}
Z_{V_L^+}(f_{\alpha},\tau)=\frac{\eta(2\tau)^{2(\alpha,\alpha)-k}}{\eta(\tau)^{(\alpha,\alpha)-k}}.
\end{gather}
\end{rmk}

It remains to compute the traces of states of the form $h_{i_1}[-n_1]...h_{i_p}[-n_{p}]\mathbf 1\in M^+\subset V_L^+$ for $r$ even (see \eqref{evenh}).  The idea is once more to employ $\ZZ_2$-twisted Zhu recursion from Theorem~\ref{thmZ2Z}.  For this we require the following lemma on traces in the full lattice VOA $V_L$. It is essentially a special case of \cite[Theorem 3.14]{MT2} for $n=0$. Nevertheless, we give a proof here as well for the convenience of the reader.

\medskip
Before stating the lemma we introduce the following notation. 
For a finite set $A$ we let $S(A)$ be the symmetric group acting on $A$ and we abbreviate the set $\{a,a+1,...,b\}$ ($a,b\in\ZZ_{>0}$) by $\underline{ab}$ or simply $\underline b$ if $a=1$. Further let 
\begin{gather}
\Inv(A):=\{\sigma\in S(A)\: :\: \sigma^2=\id_A\}
\end{gather}
denote the set of all involutions in $S(A)$ and 
\begin{gather}
\Inv_0(A):=\{\sigma\in\Inv(A)\: :\: \sigma(x)\neq x\text{ for all }x\in A\}
\end{gather}
the set of fixed-point free involutions on $A$. 

We also require the following variant of the theta function of a lattice $L$.  For any function $P:L\to\CC$ (which usually is a polynomial) set
\begin{gather}
\theta_L(\tau;P)=\sum_{\alpha\in L}P(\alpha)q^{(\alpha,\alpha)/2}.
\end{gather}
Note that if $P(-\alpha)=-P(\alpha)$, then $\theta_L(\tau,P)$ vanishes identically.
\begin{lem}\label{lemVL}
Consider the $M$-module $N:=M\otimes e^{\alpha}\subset V_L$ for some $\alpha\in L$ and let $u= h_{i_1}[-n_1]...h_{i_p}[-n_p]\mathbf 1$ for \emph{any} $p\geq 1$.  Further let $\Lambda=\{j\in\underline p\: :\: n_j=1\}.$ Then we have
\begin{align}
\label{eqZNu}Z_N(u,\tau)&=\sum_{\Delta\subseteq\Lambda} \big(\prod_{j\in\Delta} (h_{i_j},\alpha)\big) \frac{q^{(\alpha,\alpha)/2}}{\eta(\tau)^k} \sum_{\sigma\in \Inv_0(\underline r\setminus \Delta)} \prod_{(rs)} \delta_{i_r,i_s}\widehat E_{n_r+n_s},\\
\label{eqZVLu}Z_{V_L}(u,\tau)&=\sum_{\Delta\subseteq\Lambda}  \frac{\theta_L(\tau, P_\Delta)}{\eta(\tau)^k} \sum_{\sigma\in \Inv_0(\underline r\setminus \Delta)} \prod_{(rs)} \delta_{i_r,i_s}\widehat E_{n_r+n_s},
\end{align}
where $(rs)$ runs through all transpositions whose product is $\sigma$ and we set
$$P_\Delta(\alpha):=\prod_{j\in\Delta}(h_{i_j},\alpha).$$
\end{lem}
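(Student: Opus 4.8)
The plan is to prove \eqref{eqZNu} by induction on $p$ using the module version of Zhu recursion (Theorem~\ref{thm2}, valid for the $M$-module $N$ by the remark following its proof), and then to derive \eqref{eqZVLu} by summing \eqref{eqZNu} over the decomposition $V_L=\bigoplus_{\alpha\in L}M\otimes e^\alpha$. I would take the base case to be $p=0$, where $u=\mathbf 1$ and $Z_N(\mathbf 1,\tau)=q^{(\alpha,\alpha)/2}/\eta(\tau)^k$ matches the empty-involution term ($\Delta=\emptyset$, $\sigma$ trivial) on the right of \eqref{eqZNu}.

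For the inductive step I would write $u=h_{i_1}[-n_1]v$ with $v=h_{i_2}[-n_2]\cdots h_{i_p}[-n_p]\mathbf 1$ and apply Theorem~\ref{thm2} with first argument $h_{i_1}$ and $n=n_1$, producing a zero-mode term $\delta_{n_1,1}\Tr_N o(h_{i_1})o(v)q^{L(0)-c/24}$ together with $\sum_{m\geq1}\tfrac1m\widehat E_{n_1+m}(\tau)Z_N(h_{i_1}[m]v,\tau)$. Two local facts then evaluate these. First, because $h_{i_1}(0)$ commutes with every creation mode and acts on $e^\alpha$ by the scalar $(h_{i_1},\alpha)$ (see \eqref{eqhnealpha}), the zero mode $o(h_{i_1})=h_{i_1}(0)$ is the scalar operator $(h_{i_1},\alpha)\mathrm{Id}_N$, so the zero-mode term is $\delta_{n_1,1}(h_{i_1},\alpha)Z_N(v,\tau)$. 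Second, the square-bracket commutator relations \eqref{commrelns} give $h_{i_1}[m]v=\sum_{j=2}^p\delta_{i_1,i_j}\delta_{m,n_j}n_j\,(v\setminus j)$ for $m\geq1$, where $v\setminus j$ denotes $v$ with $h_{i_j}[-n_j]$ deleted; the prefactor $\tfrac1m$ cancels $n_j$ once $m=n_j$ is forced, collapsing the sum to $\sum_{j=2}^p\delta_{i_1,i_j}\widehat E_{n_1+n_j}(\tau)Z_N(v\setminus j,\tau)$.

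This yields the recursion
$$Z_N(u,\tau)=\delta_{n_1,1}(h_{i_1},\alpha)Z_N(v,\tau)+\sum_{j=2}^p\delta_{i_1,i_j}\widehat E_{n_1+n_j}(\tau)Z_N(v\setminus j,\tau),$$
and it remains to verify that the proposed right-hand side of \eqref{eqZNu} obeys the same recursion. I would read that side as a single sum over involutions $\sigma$ of $\underline p$ whose fixed-point set lies in $\Lambda$ (taking $\Delta$ to be the set of fixed points), each fixed point $j$ weighted by $(h_{i_j},\alpha)$ and each $2$-cycle $(rs)$ by $\delta_{i_r,i_s}\widehat E_{n_r+n_s}$. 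Splitting according to the image of the index $1$ under $\sigma$, the case $\sigma(1)=1$ requires $1\in\Lambda$, i.e. $n_1=1$, and contributes $(h_{i_1},\alpha)$ times the analogous sum on $\{2,\dots,p\}$, while the case $\sigma(1)=j$ contributes $\delta_{i_1,i_j}\widehat E_{n_1+n_j}$ times the sum on $\{2,\dots,p\}\setminus\{j\}$. By the induction hypothesis these equal $Z_N(v,\tau)$ and $Z_N(v\setminus j,\tau)$, so the two sides agree and the induction closes.

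For \eqref{eqZVLu}, since $o(u)$ preserves each summand $N_\alpha=M\otimes e^\alpha$ I would write $Z_{V_L}(u,\tau)=\sum_{\alpha\in L}Z_{N_\alpha}(u,\tau)$, insert \eqref{eqZNu}, interchange the (absolutely convergent) sums over $\alpha$ and over $\Delta$, and recognize $\sum_{\alpha\in L}P_\Delta(\alpha)q^{(\alpha,\alpha)/2}=\theta_L(\tau,P_\Delta)$ with $P_\Delta(\alpha)=\prod_{j\in\Delta}(h_{i_j},\alpha)$. I expect the only delicate point to be the evaluation of the zero-mode term: it is precisely the Kronecker delta $\delta_{n_1,1}$ in Zhu recursion that enforces $\Delta\subseteq\Lambda$, thereby coupling the combinatorics of involutions to the arithmetic weights $(h_{i_j},\alpha)$, and getting this bookkeeping exactly right is where care is needed.
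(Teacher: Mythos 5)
Your proposal is correct and follows essentially the same route as the paper's own proof: induction on $p$ via the module version of Zhu recursion (Theorem~\ref{thm2}), with the zero-mode term evaluated through \eqref{eqhnealpha}, the square-bracket commutator relations collapsing the sum over $m$, and \eqref{eqZVLu} obtained by summing \eqref{eqZNu} over the decomposition $V_L=\bigoplus_{\alpha\in L}M\otimes e^\alpha$. The only cosmetic differences are your choice of $p=0$ as the base case (the paper uses $p=1$) and your phrasing of the inductive step as checking that the claimed right-hand side satisfies the same recursion, which is logically identical to the paper's substitution of the induction hypothesis and recombination over $1\in\Delta$ versus $\sigma(1)=j$.
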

\begin{proof}
We first prove \eqref{eqZNu}. For $p=1$ we find by the standard Zhu recursion (Theorem~\ref{thm2}) that
\begin{align*}
Z_N(u,\tau)&=\delta_{n_1,1}\Tr_N o(h_{i_1}) o(\mathbf 1)q^{L(0)-c/24}+\sum_{m\geq 1}\frac 1m \widehat E_{n_1+m} Z_N(h_{i_1}[m]\mathbf 1)\\
&=\delta_{n_1,1} (h_{i_1},\alpha)\frac{q^{(\alpha,\alpha)/2}}{\eta^k}
\end{align*}
by \eqref{eqhnealpha} together with the standard fact that $o(\mathbf 1)=\id_V$ in any VOA $V$. This is precisely what \eqref{eqZNu} reduces to for $p=1$, using the convention that the identity is a fixed-point free involution on the empty set and an empty product is $1$ by definition.

Now suppose \eqref{eqZNu} holds for all $p'\leq p$ some $p\geq 1$ and. consider $u=h_{i_1}[-n_1]v$ with $v=h_{i_2}[-n_2]...h_{i_{p+1}}[-n_{p+1}]\mathbf 1$. Then we find, again by Zhu recursion and similar considerations as e.g. in the proof of Theorem~\ref{thmlattrecur} that
\begin{align*}
Z_N(u,\tau)&=\delta_{n_1,1}\Tr_N o(h_{i_1}) o(v)q^{L(0)-c/24}+\sum_{m\geq 1}\frac 1m \widehat E_{n_1+m} Z_N(h_{i_1}[m]v)\\
&=\delta_{n_1,1} (h_{i_1},\alpha) Z_N(v,\tau)+\sum_{j=2}^{p+1} \delta_{i_1,i_j}\widehat E_{n_1+n_j}Z_N(v\setminus j)\\
&=\delta_{n_1,1} (h_{i_1},\alpha) \left[\sum_{\Delta\subseteq\Lambda\setminus \{1\}} \big(\prod_{j\in\Delta} (h_{i_j},\alpha)\big) \frac{q^{(\alpha,\alpha)/2}}{\eta(\tau)^k} \sum_{\sigma\in \Inv_0(\underline {2,p+1}\setminus \Delta)} \prod_{(rs)} \delta_{i_r,i_s}\widehat E_{n_r+n_s}\right]\\
&\qquad +\sum_{j=2}^{p+1} \delta_{i_1,i_j}\widehat E_{n_1+n_j} \left[\sum_{\Delta\subseteq\Lambda\setminus\{1,j\}} \big(\prod_{j\in\Delta} (h_{i_j},\alpha)\big) \frac{q^{(\alpha,\alpha)/2}}{\eta(\tau)^k} \sum_{\sigma\in \Inv_0(\underline {p+1}\setminus \Delta\setminus\{1,j\})} \prod_{(rs)} \delta_{i_r,i_s}\widehat E_{n_r+n_s}\right]\\
&=\sum_{\substack{ \Delta\subseteq\Lambda\\ 1\in\Delta}} \big(\prod_{j\in\Delta} (h_{i_j},\alpha)\big) \frac{q^{(\alpha,\alpha)/2}}{\eta(\tau)^k} \sum_{\sigma\in \Inv_0(\underline {p+1}\setminus \Delta)} \prod_{(rs)} \delta_{i_r,i_s}\widehat E_{n_r+n_s}\\
&\qquad +\sum_{\substack{ \Delta\subseteq\Lambda\\ 1\notin\Delta}} \big(\prod_{j\in\Delta} (h_{i_j},\alpha)\big) \frac{q^{(\alpha,\alpha)/2}}{\eta(\tau)^k} \sum_{\sigma\in \Inv_0(\underline {p+1}\setminus \Delta)} \prod_{(rs)} \delta_{i_r,i_s}\widehat E_{n_r+n_s}\\
&=\sum_{\Delta\subseteq\Lambda} \big(\prod_{j\in\Delta} (h_{i_j},\alpha)\big) \frac{q^{(\alpha,\alpha)/2}}{\eta(\tau)^k} \sum_{\sigma\in \Inv_0(\underline {p+1}\setminus \Delta)} \prod_{(rs)} \delta_{i_r,i_s}\widehat E_{n_r+n_s}.
\end{align*}
Hence we have shown \eqref{eqZNu}.

Since $V_L=\bigoplus _{\alpha\in L} M\otimes e^\alpha$, we obtain \eqref{eqZVLu} from \eqref{eqZNu} simply by summing over $\alpha\in L$.
\end{proof}

With Lemma~\ref{lemVL} we can now proceed to proving the following theorem, which complements Theorem~\ref{thmlattrecur} in that it provides a formula for the trace function of the states $u\in M^+\subset V_L^+$.

\begin{thm}\label{thmVL+trace}
Let $u\in M^+\subseteq V_L^+$ be a state as given above (see also \eqref{evenh}) and let $\Lambda$ and $P_\Delta$ be as in Lemma~\ref{lemVL}.  Then we have that 
\begin{multline*}
Z_{V_L^+}(u,\tau)=\big(\sum_{\sigma\in \Inv_0(\underline p)} \big(\prod_{(rs)} \delta_{i_r,i_s}\widehat F_{n_r+n_s}(\tau)\big)\big) \left(Z_{V_L^+}(\tau)-\frac 12 Z_{V_L}(\tau)\right)\\
+\frac 12\sum_{\Delta\subseteq\Lambda} \frac{\theta_L(\tau,P_\Delta)}{\eta^k}\sum_{\sigma\in \Inv_0(\underline p\setminus\Delta)} \big(\prod_{(rs)} \delta_{i_r,i_s}\widehat E_{n_r+n_s}(\tau)\big).
\end{multline*}
\end{thm}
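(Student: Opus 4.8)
The plan is to prove this by induction on $p$ using the $\ZZ_2$-twisted Zhu recursion (Theorem~\ref{thmZ2Z}), peeling off the leftmost mode $h_{i_1}[-n_1]$ at each step, in exact parallel with the proof of Theorem~\ref{thmM+}. The essential new feature compared to Theorem~\ref{thmM+} is that we are now computing traces in $V_L^+$ rather than $M^+$, so the ``full VOA'' term $Z_V(u[m]v,\tau)$ appearing in Theorem~\ref{thmZ2Z} is now $Z_{V_L}(u[m]v,\tau)$, and this is precisely the quantity controlled by Lemma~\ref{lemVL}. Thus the two inputs are: the inductive hypothesis for $Z_{V_L^+}$ of the shorter word, and the \emph{closed-form} Lemma~\ref{lemVL} for the $V_L$-traces (which requires no induction of its own since it is already proved).

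First I would record the base case $p=0$: the right-hand side collapses to $(Z_{V_L^+}-\tfrac12 Z_{V_L}) + \tfrac12 \theta_L/\eta^k = Z_{V_L^+}$ using the $\Delta=\varnothing$ term and $Z_{V_L}=\theta_L/\eta^k$ from Lemma~\ref{lemchars}, which matches $Z_{V_L^+}(\mathbf 1,\tau)$. For the inductive step, write $u=h_{i_1}[-n_1]v$ and apply Theorem~\ref{thmZ2Z}. Using the Heisenberg commutator relations \eqref{commrelns} (which hold for square-bracket modes, as noted in Section~\ref{cylinder}), only the $m=n_j$ terms survive in the sums, so $h_{i_1}[m]v$ reduces to $\delta_{i_1,i_j}$ times $v$ with the $j$-th mode deleted. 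This produces three groups of terms: the $\Tr_V o(u)o(v)$ contribution (nonzero only when $n_1=1$, governed by \eqref{eqhnealpha} and Lemma~\ref{lemVL}, which injects the $(h_{i_1},\alpha)$ factors that build up $P_\Delta$), the $\widehat F$-sum weighting $Z_{V_L^+}$ of the shorter word, and the $(\widehat E-\widehat F)$-sum weighting $Z_{V_L}$ of the shorter word.

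The main bookkeeping obstacle, and the step I expect to be most delicate, is the recombination: I must show that substituting the inductive formula for $Z_{V_L^+}(v\setminus j)$ and the Lemma~\ref{lemVL} formula for $Z_{V_L}(v\setminus j)$ reassembles into the claimed formula for $u$, with $\sigma$ now ranging over $\Inv_0(\underline p)$ and $\Delta$ over subsets of $\Lambda$. As in Theorem~\ref{thmM+}, the $\widehat F$-terms reorganize cleanly into the first bracket $(Z_{V_L^+}-\tfrac12 Z_{V_L})$, while the crucial cancellation is that the $\widehat F$-weighted contributions to the $Z_{V_L}$-part (coming from the inductive $Z_{V_L^+}$ term) exactly cancel the $\widehat F$-portion of the $(\widehat E-\widehat F)$-weighted term, leaving a pure $\widehat E$-expression. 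The new subtlety over Theorem~\ref{thmM+} is tracking how the index $j$ either joins a transposition $(1\,j)$ in $\sigma$ (contributing $\widehat E_{n_1+n_j}$) or, when $n_1=1$, sends $1$ into the set $\Delta$ (contributing the factor $(h_{i_1},\alpha)$ to $P_\Delta$); verifying that these two mechanisms precisely generate all pairs $(\sigma,\Delta)$ with $\sigma\in\Inv_0(\underline p\setminus\Delta)$ and $\Delta\subseteq\Lambda$ is the heart of the argument.

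I would present the $\widehat F$-part bookkeeping by direct appeal to the computation already carried out in Theorem~\ref{thmM+} (the $Z_{M^+}$-coefficient there has the identical combinatorial shape), and then concentrate the detailed verification on the $Z_{V_L}$-part, where the interaction between the involution structure and the $\Delta$-subsets is genuinely new and where Lemma~\ref{lemVL} does the essential work of supplying the $\theta_L(\tau,P_\Delta)$ factors.
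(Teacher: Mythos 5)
Your proposal follows essentially the same route as the paper's proof: induction on $p$ via Theorem~\ref{thmZ2Z}, with Lemma~\ref{lemVL} supplying the closed forms for $Z_{V_L}(v\setminus j)$ and for the $\delta_{n_1,1}$ trace term (decomposed over the $M$-modules $M\otimes e^{\alpha}$), and the recombination you describe — including the exact cancellation of the $\widehat F$-weighted contributions to the $Z_{V_L}$-part against the $\widehat F$-portion of the $(\widehat E-\widehat F)$-term, and the dichotomy of index $1$ either joining a transposition or entering $\Delta$ — is precisely the bookkeeping carried out in the paper. The approach and all key mechanisms are correct.
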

\begin{proof}
First note that for $p=0$, the claimed formula for $Z_{V_L^+}(u,\tau)$ yields
$$Z_{V_L^+}(\tau)-\frac 12 Z_{V_L}(\tau)+\frac 12 Z_{V_L}(\tau)=Z_{V_L^+}(\tau),$$
as it should.

\medskip
Now suppose the formula has been proved for all even $p'<p$ for some $p\geq 2$.  Writing again $u=h_{i_1}[-n_1]v$ with $v=h_{i_2}[n_2]...h_{i_p}[-n_p]\mathbf 1$, we use the $\ZZ_2$-twisted version of Zhu recursion from Theorem~\ref{thmZ2Z} to obtain, leaving out $\tau$ from the discussion once more to improve legibility and to save space,
\begin{align}
\nonumber Z(u)&=\frac 12\delta_{n_1,1} \Tr_{V_L}o(h_1)o(v)q^{L(0)-c/24}+\sum_{m\geq 0} \frac 1m \widehat F_{n_1+m}Z_{V_L^+}(h_{i_1}[m] u)\\
\nonumber &\qquad\qquad +\frac 12 \sum_{m\geq 1}\frac 1m \left(\widehat E_{n_1+m}-\widehat F_{n_1+m}\right)Z_{V_L}(h_{i_1}[m]v)\\
\label{eqintermed}&=\frac 12 \sum_{\alpha\in L}\delta_{n_1,1}(h_{i_1},\alpha)  Z_{M\otimes e^{\alpha}}(v)
+ \sum_{j=2}^p \widehat \delta_{i_1,i_j}F_{n_1+n_j}Z_{V_L^+}(v\setminus j)\\
\nonumber &\qquad\qquad +\frac 12 \sum_{j=2}^p\left( \delta_{i_1,i_j}\widehat E_{n_1+n_j}- \delta_{i_1,i_j}\widehat F_{n_1+n_j}\right)Z_{V_L}(v\setminus j).
\end{align} 
According to Lemma~\ref{lemVL} we have 
$$Z_{M\otimes e^{\alpha}}(v)=\sum_{\Delta\subseteq\Lambda\setminus 1} \big(\prod_{j\in\Delta} (h_{i_j},\alpha)\big) \frac{q^{(\alpha,\alpha)/2}}{\eta(\tau)^k} \sum_{\sigma\in \Inv_0(\underline p\setminus \Delta)} \prod_{(rs)} \delta_{i_r,i_s}\widehat E_{n_r+n_s}$$
as well as
$$Z_{V_L}(v\setminus j)=\sum_{\Delta\subseteq\Lambda\setminus\{1,j\}}  \frac{\theta_L(\tau, P_\Delta)}{\eta(\tau)^k} \sum_{\sigma\in \Inv_0(\underline p\setminus \Delta\setminus\{1,j\})} \prod_{(rs)} \delta_{i_r,i_s}\widehat E_{n_r+n_s},$$
while we have by the induction hypothesis that
\begin{multline*}
Z_{V_L^+}(v\setminus j)=\big(\sum_{\sigma\in \Inv_0(\underline {2,p}\setminus j)} \big(\prod_{(rs)} \delta_{i_r,i_s}\widehat F_{n_r+n_s}\big)\big) \left(Z_{V_L^+}-\frac 12 Z_{V_L}\right)\\
+\frac 12\sum_{\Delta\subseteq\Lambda\setminus\{1,j\}} \frac{\theta_L(\cdot,P_\Delta)}{\eta^k}\sum_{\sigma\in \Inv_0(\underline p\setminus\{1,j\}\setminus\Delta)} \big(\prod_{(rs)} \delta_{i_r,i_s}\widehat E_{n_r+n_s}\big)
\end{multline*}
Plugging this into \eqref{eqintermed} and simplifying then yields 
\begin{align*}
&\frac 12 \sum_{\alpha\in L}\delta_{n_1,1}(h_1,\alpha) \sum_{\Delta\subseteq\Lambda\setminus 1} P_\Delta(\alpha) \frac{q^{(\alpha,\alpha)/2}}{\eta(\tau)^k} \sum_{\sigma\in \Inv_0(\underline p\setminus \Delta)} \prod_{(rs)} \delta_{i_r,i_s}\widehat E_{n_r+n_s}\\
&\qquad\qquad + \sum_{j=2}^p \widehat \delta_{i_1,i_j}F_{n_1+n_j}\big(\sum_{\sigma\in\Inv_0(\underline{2,p}\setminus j)}\big(\prod_{(rs)}\delta_{i_r,i_s}\widehat F_{n_r+n_s}\big)\big)\left(Z_{V_L^+}-\frac 12Z_{V_L}\right)\\
&\qquad\qquad +\frac 12 \sum_{j=2}^p \delta_{i_1,i_j}\widehat E_{n_1+n_j}\sum_{\Delta\subseteq \Lambda\setminus\{1,j\}}\frac{\theta_L(\cdot,P_\Delta}{\eta^k}\sum_{\sigma\in\Inv_0(\underline p\setminus\Delta\setminus\{1,j\})}\big(\prod_{(rs)}\delta_{i_r,i_s}\widehat E_{n_r+n_s}\big)\\
=&\frac 12 \sum_{\substack{\Delta\subseteq\Lambda\\ 1\in\Delta}}\frac{\theta_L(\cdot,P_\Delta}{\eta^k}\sum_{\sigma\in\Inv_0(\underline p\setminus\Delta)}\big(\prod_{(rs)}\delta_{i_r,i_s}\widehat E_{n_r+n_s}\big)\\
&\qquad\qquad + \big(\sum_{\sigma\in\Inv_0(\underline{p})}\big(\prod_{(rs)}\delta_{i_r,i_s}\widehat F_{n_r+n_s}\big)\big)\left(Z_{V_L^+}-\frac 12Z_{V_L}\right)\\
&\qquad\qquad+\frac 12 \sum_{\substack{\Delta\subseteq\Lambda\\ 1\notin\Delta}}\frac{\theta_L(\cdot,P_\Delta}{\eta^k}\sum_{\sigma\in\Inv_0(\underline p\setminus\Delta)}\big(\prod_{(rs)}\delta_{i_r,i_s}\widehat E_{n_r+n_s}\big),
\end{align*}
which simplifies further to the claimed formula. This completes the proof.
\end{proof}

We conclude by analyzing the modularity properties of $Z_{V_L^+}(u,\tau)$ as given in Theorem~\ref{thmVL+trace}.  The first line of the formula stated there is clearly a modular form of weight 
$$K=\sum_{j=1}^p n_j$$ 
for the group $\Gamma_0(\lcm(2,N))$ with some multiplier system, where $N$ is the level of the underlying lattice by Schoeneberg's Theorem~\ref{thmSchoeneberg}. So we only need to consider the second line of the formula,
\begin{gather}\label{eqmodular}
G(u, \tau)=\sum_{\Delta\subseteq\Lambda} \frac{\theta_L(\tau,P_\Delta)}{\eta^k(\tau)}\sum_{\sigma\in \Inv_0(\underline p\setminus\Delta)} \big(\prod_{(rs)} \delta_{i_r,i_s}\widehat E_{n_r+n_s}(\tau)\big).
\end{gather}

The remainder of this section is devoted to proving the following result, which gives an alternative proof and a more explicit formulation for the modularity of the $1$-point functions $Z_{V_L^+}(u)$.

\begin{prop}\label{propmod}
For a state $u\in V_{L}^+$ as in Theorem~\ref{thmVL+trace} the function $G(u)$ defined in \eqref{eqmodular} is modular of weight $K$ with respect to the group $\Gamma_0(N)$, where $N$ is the level of the lattice $L$ with a certain multiplier system.
\end{prop}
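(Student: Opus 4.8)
The plan is to exhibit $G(u,\tau)$ as a single coefficient of a product of Jacobi-like forms whose total index vanishes, so that modularity follows from the criterion recorded just after Theorem~\ref{thmSchoeneberg}, namely that a Jacobi-like form of index $0$ has modular Taylor coefficients. The difficulty is that \eqref{eqmodular} is genuinely assembled from non-modular pieces. On the one hand the renormalized Eisenstein series $\widehat E_{n_r+n_s}$ contain, through $\widehat E_2=E_2$ (see \eqref{eqhatE}), the quasimodular anomaly carried by $E_2$; on the other hand the twisted theta functions $\theta_L(\tau,P_\Delta)$ are modular only when the polynomial $P_\Delta(\alpha)=\prod_{j\in\Delta}(h_{i_j},\alpha)$ is harmonic, which it is not in general. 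The whole point is that these two sources of non-modularity are both governed by $E_2$ and must cancel against one another.

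First I would introduce the auxiliary vector $v=\sum_{j}x_jh_{i_j}\in\CC\otimes L$ depending on formal variables $x_j$, so that $(v,v)=\sum_{j,j'}\delta_{i_j,i_{j'}}x_jx_{j'}$ by orthonormality of the $h_i$, while $(v,\alpha)=\sum_jx_j(h_{i_j},\alpha)$. With this choice the twisted thetas assemble into the single object \eqref{eqThetaX}: the coefficient of the squarefree monomial $\prod_{j\in\Delta}x_j$ in $\theta_L(\tau,v,|\Delta|)$ is exactly $|\Delta|!\,\theta_L(\tau,P_\Delta)$, so that $\Theta_L(\tau,v,X)$ packages all the $\theta_L(\tau,P_\Delta)$ simultaneously and is, by Theorem~\ref{thmSchoeneberg}, a Jacobi-like form of weight $k/2$ and index $(v,v)$. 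The crucial structural input here is the singular-vector identity \eqref{eqhnealpha}: only the zero modes $h(0)$ act nontrivially on $e^\alpha$, which is precisely why a slot can contribute a source factor $(h_{i_j},\alpha)$ only when $n_j=1$, i.e.\ $j\in\Lambda$, matching the constraint $\Delta\subseteq\Lambda$ in \eqref{eqmodular}.

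Next I would account for the pairing factors $\prod_{(rs)}\delta_{i_r,i_s}\widehat E_{n_r+n_s}$. Splitting the basic propagator as $P_1(z,\tau)=E_2(\tau)z-\zeta(z,\tau)$ (see \eqref{P1def}), the term $-\zeta$ produces only the higher Eisenstein series $\widehat E_{n_r+n_s}$ with $n_r+n_s\geq 4$, which are genuine modular forms and hence contribute index-$0$ Jacobi-like factors; together with the modular factor $1/\eta^k$ these leave the index count undisturbed. The only remaining contribution is the anomalous term $E_2(\tau)z$, which is responsible for the values $\widehat E_2=E_2$ attached to pairs of weight-one slots; upon exponentiating the Wick sum these anomalous contributions collect into a power of $\widetilde E(\tau,X)=\exp(-2\pi iX E_2(\tau))$ from \eqref{eqEhat}. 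Matching the normalizations in \eqref{P1def} and \eqref{eqhatE}, one checks that the anomaly is exactly $\widetilde E(\tau,X)^{-(v,v)}$, a Jacobi-like form of weight $0$ and index $-(v,v)$ by Lemma~\ref{lemEhat}.

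Combining these, $G(u,\tau)$ appears as the coefficient of $\prod_{j=1}^px_j$ (together with the appropriate power of $X$) in a product $\Theta_L(\tau,v,X)\cdot\widetilde E(\tau,X)^{-(v,v)}\cdot(\text{index-}0\text{ modular factors})$; since indices add under multiplication, the total index is $(v,v)-(v,v)=0$, and the criterion following Theorem~\ref{thmSchoeneberg} forces the relevant coefficient to be a modular form for $\Gamma_0(N)$ with the induced multiplier system. A short weight bookkeeping ($|\Delta|$ from $\theta_L(\tau,P_\Delta)/\eta^k$ and $\sum_{j\notin\Delta}n_j$ from the pairings, summing to $K=\sum_jn_j$ because $n_j=1$ for $j\in\Lambda\supseteq\Delta$) produces the stated weight. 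The main obstacle is the matching that underlies the second and third paragraphs: proving that the combinatorial sum over subsets $\Delta\subseteq\Lambda$ and fixed-point-free involutions $\sigma\in\Inv_0(\underline p\setminus\Delta)$, subject to the same-color constraint $\delta_{i_r,i_s}$, is reproduced exactly by the indicated coefficient of this Jacobi-like product, and in particular that the constant in the $E_2$-anomaly of the propagator coincides \emph{on the nose} with the index $(v,v)$, so that the compensating power of $\widetilde E$ cancels it precisely.
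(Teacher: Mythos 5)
Your proposal is, at bottom, the same argument as the paper's: realize $\eta(\tau)^k G(u,\tau)$ as a single Taylor coefficient of a product of Jacobi-like forms $\Theta_L\cdot(\text{power of }\widetilde E)\cdot F$ whose indices sum to zero, and then invoke the criterion recorded after Theorem~\ref{thmSchoeneberg} that an index-$0$ Jacobi-like form has modular coefficients. The paper carries this out after reducing to rank $k=1$ and even $\ell=|\Lambda|$, where the cancellation is simply $\Theta_L(\tau,h,X)\widetilde E(\tau,-X)$ with indices $+1$ and $-1$, and where the combinatorial identification is done explicitly in \eqref{eqstep1}--\eqref{eqG} versus \eqref{eqcoeffX}; your polarization with formal variables $x_j$ is an attempt to treat all ranks uniformly, which would be a genuine strengthening if carried out.

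However, one step fails as literally written. Extracting the coefficient of $\prod_{j=1}^{p}x_j$ forces the sum $v=\sum_j x_jh_{i_j}$ to run over \emph{all} slots $j\in\{1,\dots,p\}$. Then $\Theta_L(\tau,v,X)$ contributes terms $\theta_L(\tau,P_\Delta)$ for every even-cardinality $\Delta\subseteq\{1,\dots,p\}$, and $\widetilde E(\tau,X)^{-(v,v)}$ contributes $E_2$-factors for every same-colour pair --- including subsets $\Delta\not\subseteq\Lambda$ and pairs containing a slot with $n_j>1$. No such terms occur in \eqref{eqmodular}, and there is no mechanism for them to cancel, since the third factor multiplies every term of the expansion. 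The repair is to support $v$ on the weight-one slots only, $v=\sum_{j\in\Lambda}x_jh_{i_j}$, extract the coefficient of $\prod_{j\in\Lambda}x_j$, and take the index-$0$ factor of the form $F(\tau,X)=\sum_S c_S\, f_S(\tau)(2\pi iX)^{a_S}$, where $S$ runs over the possible sets of leftover weight-one slots and $f_S$ is the sum over involutions of $(\{1,\dots,p\}\setminus\Lambda)\cup S$ in which no two weight-one slots are paired with each other; each $f_S$ is then a true modular form of weight $K-\ell+|S|$, and the exponents $a_S$ and constants $c_S$ are dictated by weight and normalization bookkeeping (this assembly, which also handles the factors of $2$ coming from $(v,v)^n$ and from \eqref{eqThetaX}, is exactly what the paper's passage from \eqref{eqstep1} to \eqref{eqG} accomplishes in rank one, with the parity of $\ell$ requiring the same case distinction the paper makes). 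With that correction, together with the combinatorial matching you yourself flag as outstanding, your argument becomes a rank-general version of the paper's proof rather than a different one.
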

\begin{proof}
For simplicity we assume from now on that the rank of the underlying lattice $L$ is $k=1$ and we leave the details for the higher rank case to the reader.\ Therefore we may assume 
$$u=h[-n_1] h[-n_2]...h[-n_p]\mathbf 1,$$
where we assume that exactly the first $\ell$ of the $n_j$ are $1$, so in the notation of Lemma~\ref{lemVL} $\Lambda=\{1,...,\ell\}.$ 
In this case we find that $P_\Delta(\alpha)=(h,\alpha)^{\# \Delta}$, so that we can write 
$$\theta_L(\tau,P_\Delta)=\theta_L(\tau,h,\#\Delta)$$
with $\theta_L(\tau,v,m)$ as in \eqref{eqthetavm} and the $\delta$-functions in \eqref{eqmodular} all become $1$.  

We assume further that $\ell=2\ell'$ is even, the case $\ell=2\ell'+1$ odd can be handled analogously. Using these simplifications we may rewrite $G(u,\tau)$ as
\begin{align}
\eta(\tau)G(u,\tau)&=\sum_{m=0}^{\ell'} \binom{2\ell'}{2m} \theta_L(\tau,h,2m)\sum_{\sigma\in\Inv_0(\underline p\setminus\underline{2m})} \big(\prod_{(rs)}\widehat E_{n_r+n_s}(\tau)\big)\label{eqstep1} \\
&=\sum_{m=0}^{\ell'} \binom{2\ell'}{2m} \theta_L(\tau,h,2m)\sum_{n=0}^{\ell'-m}\frac{(2n)!}{n!2^n}\binom{2(\ell'-m)}{2n}E_2(\tau)^n\label{eqstep2}\\
&\qquad\qquad\qquad \qquad \qquad \qquad \qquad \qquad \times \sum_{\substack{\sigma\in\Inv_0(\underline p\setminus\underline{2(m+n)})\notag\\ \sigma(i)>2\ell', i\leq 2\ell}} \big(\prod_{(rs)}\widehat E_{n_r+n_s}(\tau)\big)\notag\\
&=\frac{(2\ell')!}{2^{\ell'}}\sum_{m=0}^{\ell'} \frac{2^m}{(2m)!} \theta_L(\tau,h,2m)\sum_{n=0}^{\ell'-m}\frac{E_2(\tau)^n}{n!}\label{eqG}\\
&\qquad\qquad\qquad \qquad \qquad \qquad \times \frac{2^{\ell'-m-n}}{(2(\ell'-m-n))!} \sum_{\substack{\sigma\in\Inv_0(\underline p\setminus\underline{2(m+n)})\\ \sigma(i)>2\ell', i\leq 2\ell}} \big(\prod_{(rs)}\widehat E_{n_r+n_s}(\tau)\big)\notag
\end{align}
where we  used in \eqref{eqstep1} that the summation over $\Delta$ in \eqref{eqmodular} depends only on the cardinality of $\Delta$ and that $\theta_L(\tau,h,m)=0$ for $m$ odd; in \eqref{eqstep2} we collected all possible powers of $E_2$ in the expressions involved; in \eqref{eqG} we simply reorganized the binomial and other coefficients. Note that the expression in the last line is indeed truly modular for the full modular group $\SL_2(\ZZ)$ of the correct weight since at least one of $n_r$ and $n_s$ is larger than $1$. 

\medskip
Now consider the Jacobi-like forms $\Theta_L(\tau,h,X)$ and $\widetilde E(\tau,X)$ as in \eqref{eqThetaX} and \eqref{eqEhat} and let $(f_m)_{m\geq 0}$ be any sequence of modular forms of weight $K+2m$ with respect to the full modular group $\SL_2(\ZZ)$ for some even integer $K>0$. Then the function
$$F(\tau,X):=\sum_{m=0} \frac{f_m(\tau)}{m!}(2\pi iX)^m$$
is clearly a Jacobi-like form of weight $K$ and index $0$.  Since we know from Theorem~\ref{thmSchoeneberg} and Lemma~\ref{lemEhat} that $\Theta_L(\tau,h,X)$ and $\widetilde E(\tau,X)$ are both Jacobi-like forms of index $1$ and weight $1/2$ resp.\ $0$ for some $\Gamma_0(N)$ with a certain multiplier system resp.\ Therefore the function $\Theta_L(\tau,h,X)\widetilde E(\tau,-X)F(\tau,X)$ is a Jacobi-like form of weight $K$ and index $0$ for $\Gamma_0(N)$ with the same multiplier system as $\Theta_L(\tau,h,X)$, so that the coefficient of $(2\pi iX)^\ell$ in it, which is given by
\begin{gather}\label{eqcoeffX}
\sum_{m=0}^\ell \sum_{n=0}^{\ell-m} \frac{2^m}{(2m)!}\theta_L(\tau,h,2m) \cdot \frac{E_2(\tau)^n}{n!} \cdot \frac{f_{\ell-m-n}(\tau)}{(\ell-m-n)!},
\end{gather}
is modular of weight $K+1/2+2\ell$ for the same group and multiplier system.

\medskip
Upon comparing \eqref{eqG} and \eqref{eqcoeffX} we see that they agree up to a constant factor, so it follows that the function $G$ from \eqref{eqmodular} indeed has the modularity properties claimed in the Proposition.
\end{proof}

\bibliographystyle{amsplain}

\end{document}